\newcommand{\rel}[1]{\stackrel{#1}{\sim}}
\renewcommand{\P}[0]{\mathbb{P}}
\newcommand{\E}[0]{\mathbb{E}}
\newcommand{\cP}[2]{\mathbb{P}\left(#1\ \middle|\ #2\right)}
\newcommand{\cE}[2]{\mathbb{E}\left[#1\ \middle|\ #2\right]}
\newcommand{\nrel}[1]{\stackrel{#1}{\not\sim}}
\newcommand{\bigO}{\mathcal{O}}
\newcommand{\PPM}{\mathrm{PPM}}
\newcommand{\Binom}{\mathrm{Bin}}
\newcommand{\agreement}{\mathrm{A}}
\newcommand{\agreementNormalized}{\tilde{\mathrm{A}}}
\newcommand{\balanced}{\mathrm{Balanced}}
\providecommand{\citep}[1]{\cite{#1}}
\providecommand{\citet}[1]{\cite{#1}}
\begin{document}
\section{Introduction}

Community detection in random graphs is a central topic in random graph theory, with applications ranging from social and biological networks to information and communication systems. The planted partition model (PPM) provides a canonical framework for studying this problem. In its simplest form, the vertex set $[n]$ is divided into latent communities, and edges between vertex pairs appear independently with probability $p_n$ if both vertices belong to the same community and with probability $q_n$ otherwise. The statistical and computational thresholds for recovering the planted partition are now well understood when the number of communities is finite or grows slowly, and when communities are approximately balanced in size (see~\citet{abbe2018community} for a survey). 

Despite the large literature on community recovery in the PPM, nearly all existing works make at least one of the following two assumptions: (i) the number of communities is finite or grows slowly with the number of vertices, and/or (ii) the community sizes are asymptotically of the same order. However, these assumptions are often unrealistic in practice and restrictive from a probabilistic viewpoint. For instance, if the partition of the vertex set is chosen uniformly at random from the set of all partitions of $[n]$, the number of communities grows as $n/\log n$ \citep{pittel1997random,sachkov1997probabilistic}. Moreover, empirical studies show that real-world networks contain many small groups and a few large ones, with community sizes following heavy-tailed, often power-law, distributions~\citep{lancichinetti2008benchmark,stegehuis2016power}. From a modeling perspective, it is therefore natural to consider the PPM in regimes where the number of communities can grow arbitrarily with $n$, and where community sizes may differ by orders of magnitude. Such regimes challenge both the usual probabilistic tools and the classical metrics for assessing recovery, since standard accuracy or overlap measures depend implicitly on the number and relative sizes of communities. 

This work studies recovery in the planted partition model under minimal structural assumptions on the latent partition. We allow the number and sizes of communities to be arbitrary, including highly unbalanced cases where small and large groups coexist. To quantify recovery, we employ the correlation coefficient between partitions, a symmetric similarity index with a well-defined random baseline that is studied in~\citet{gosgens2021systematic}. In contrast to accuracy or overlap-based measures, the correlation remains interpretable even when the number of communities in the estimated partition differs from that in the ground truth, and it admits a clean probabilistic formulation amenable to theoretic analysis.

We then analyze a particularly simple community detection procedure. Given a graph $G$, the algorithm constructs a filtered graph $G^*$ that retains only the edges that participate in at least two triangles, and outputs the connected components of $G^*$ as the estimated communities. This rule can be viewed as a local common-neighbor thresholding scheme that does not require knowledge of the model parameters $p_n$ or $q_n$ nor the number of communities. 

Our main results establish conditions under which the algorithm achieves exact, almost exact, and weak recovery in the PPM. Exact recovery means that the algorithm perfectly infers the planted partition, grouping all vertices correctly with high probability. Almost exact recovery allows for a vanishingly small error. Weak recovery ensures that the inferred partition is still meaningfully correlated with the true communities, performing strictly better than a random guess. 

 We focus primarily on establishing conditions under which Diamond Percolation succeeds at community recovery while imposing minimal structural assumptions on both the number and the sizes of communities. This objective contrasts with a complementary line of work that seeks to characterize information-theoretic recovery thresholds, \textit{i.e.,} regimes in which no algorithm can succeed, but typically under stronger assumptions on the community structure.

In particular, while the simplicity of the algorithm allows us to obtain recovery guarantees with minimal assumptions on the partition, it nonetheless requires some conditions on the sparsity parameters. In particular, we cannot expect to recover the communities in standard regimes, where $p_n$ and $q_n$ scale as $\Theta(1/n)$ or as $\Theta(\log n / n)$, but the number of communities is constant. Because these regimes have been extensively studied, they are not the focus on our work. Moreover, when there is an unbounded number of communities of same-size, exact recovery is impossible when $p_n \asymp q_n = \Theta( \log n / n)$~\cite[Corollary~3]{chen2016statistical}. Likewise, if the communities have constant sizes, scaling $p_n=o(1)$ implies that almost all communities have zero inter-community edges. This motivates moving away from the standard setting by scaling $p_n$ and $q_n$ so that communities are sufficiently dense to allow recovery. Finally, we also note that for several inference problems where the number of communities is unbounded, triangle-counting algorithms have been shown to be near-optimal (see~\cite{chen2016statistical,rush2022easier} and Section~\ref{section:discussion} for further discussion).

A key technical ingredient is a probabilistic refinement lemma showing that, under suitable sparsity conditions on $p_n$ and $q_n$, the partition produced by the algorithm is with high probability a refinement of the true partition. We further prove that, whenever one partition refines another, the correlation coefficient between them concentrates around the square root of the ratio of their intra-community pair counts. This connection enables the correlation metric to be analyzed through simple moment conditions on community sizes and may be of independent interest for future research.

As a key application, we specialize our recovery results to power-law community size distributions, a setting that has not received any theoretical attention despite its empirical relevance. We prove that under mild growth conditions on the number of communities and appropriate scaling of $p_n$, the algorithm achieves exact, almost exact, or weak recovery depending on the sparsity regime. These are the first rigorous recovery guarantees for the PPM with power-law community sizes.

\medskip
\textbf{Notation.} Throughout this paper, $G_n$ denotes a graph with vertex set $[n]=\{1,\dots,n\}$. For $i,j\in[n]$, we write $i\rel{G_n}j$ if $i$ and $j$ are connected by an edge in $G_n$. $T_n$ denotes a partition of $[n]$ that represents the communities of $G_n$. If vertices $i,j\in[n]$ are part of the same community in $T_n$, we denote this by $i\rel{T_n}j$. To avoid cluttering notation, we occasionally omit the subscript $n$ and write $i\rel{T}j$ or $i\rel{G}j$ instead. The set of vertices that are in the same community as $i\in[n]$ is denoted by $T_n(i)$. We denote a vertex chosen uniformly at random from $[n]$ by $I_n$ and denote the size of its community by $S_n=|T_n(I_n)|$.
We denote the number of \emph{intra-community pairs} by
\[
m_{T_n}=\#\left\{1\le i<j\le n\ :\ i\rel{T_n}j\right\}.
\]
We denote the partition of \emph{detected} communities by $C_n$, and define $\rel{C_n}$ and $m_{C_n}$ similarly as above.
We use standard asymptotic notation. For two sequences $a_n,b_n$, we write $a_n\ll b_n$ if $a_n/b_n\to0$; $a_n\gg b_n$ if $b_n/a_n\to0$; and $a_n\sim b_n$ if $a_n/b_n\to1$. We additionally use standard Landau notation: we write $o(a_n)$ to denote any sequence $b_n\ll a_n$; $\omega(a_n)$ to denote any sequence $b_n\gg a_n$; $b_n=\bigO(a_n)$ if $\lim\sup_{n\to\infty}|b_n/a_n|<\infty$; $b_n=\Omega(a_n)$ if $\lim\inf_{n\to\infty}|b_n/a_n|>0$; and $b_n=\Theta(a_n)$ if $a_n=\bigO(b_n)$ and $b_n=\bigO(a_n)$.

We say that an event $A_n$ occurs \emph{with high probability} (or \emph{w.h.p.} in short) if $\P(A_n)\to1$. We say that a sequence of random variables $X_n$ converges \emph{in probability} to $X$ (denoted $X_n\wprto X$) if for any $\varepsilon>0$, $|X_n-X|<\varepsilon$ holds with high probability. We say that a sequence of random variables $X_n$ converges \emph{in distribution} to $X$ (denoted $X_n\stackrel d{\to}X$) if $\mathbb{P}(X_n\le x)\to\P(X\le x)$ for any $x$ where $\P(X\le x)$ is continuous.

\medskip
\textbf{Structure of the paper.} 
The paper is structured as follows. We introduce the formal setting and correlation-based recovery criteria in \cref{section:motivation}. The main assumptions and the algorithm are described in \cref{section:framework_algorithm}. We present the main recovery results for general partitions in \cref{section:arbitrary_partitions}, and apply them to power-law partitions in \cref{section:power_law_partitions}. 
Numerical experiments are presented in \cref{section:experiments}. 
Finally, we conclude the paper with a discussion in \cref{section:discussion}. The proofs can be found in the appendix.

\section{Problem Setting}
\label{section:motivation}

 In this section, we formalize the planted partition model and introduce the recovery criteria used throughout the paper. Our goal is to understand how accurately one can recover a latent community structure when the number and sizes of communities may vary arbitrarily with the total number of vertices. 

 Let $G_n = ([n], E_n)$ be an undirected random graph generated according to a planted partition model (PPM). The vertex set $[n]$ is partitioned into communities described by a random partition $T_n$. Conditioned on $T_n$, edges between vertex pairs $(i,j)$ are drawn independently:
 \begin{align*}
    \cP{i\rel{G_n} j}{T_n}
    \weq
    \begin{cases}
     p_n & \text{ if } i \rel{T_n} j, \\
     q_n & \text{ otherwise,}
    \end{cases}
 \end{align*}
 where $i \rel{G_n} j$ indicate that an edge is present between $i$ and $j$, and $i \rel{T_n} j$ indicate that $i$ and $j$ belong to the same community.  The goal is to infer the latent partition $T_n$ from the observed graph. 
 In \cref{section:recovery_criteria}, we discuss the recovery criteria that we consider in this work. In \cref{subsection:agreement-vs-correlation}, we motivate our choice for the correlation coefficient as performance measure. We discuss random partitions in~\cref{subsection:arbitrary_partitions}. Finally, we discuss relevant PPM literature in~\cref{subsection:special_cases_PPM}.

\subsection{Recovery Criteria}
\label{section:recovery_criteria}

 When the number or sizes of communities are heterogeneous, standard metrics such as agreement or normalized overlap become difficult to interpret, as they depend on the labeling and the number of communities. To overcome this limitation, we measure recovery quality using the correlation coefficient between partitions. This metric has been studied in~\citet{gosgens2021systematic}, where it was shown that it has several desirable properties that other metrics do not have. One of these desirable properties is that it has a \emph{constant baseline}: if a partition $C_n$ is drawn from a distribution that is symmetric w.r.t. vertex permutations, then the expected similarity between $C_n$ and $T_n$ is $0$.
 Another reason for using this correlation coefficient is its analytical tractability under unbalanced partitions (see also \cref{subsection:agreement-vs-correlation}). 

Let $N={n\choose 2}$ denote the number of vertex pairs. Given a partition $C$ of~$[n]$, we let 
\[
 m_C \weq \#\left\{ ij\ :\ i\rel{C}j \right\}
\]
be the number of intra-community pairs of~$C$. The quantity $m_C$ is nonnegative and upper-bounded by $N$, where $m_C=0$ corresponds to $n$ communities of size~$1$, and $m_C=N$ corresponds to a single community of size $n$. Finally, for two partitions $C$ and $T$, we define
\[
m_{CT} \weq \#\left\{ ij\ :\ i\rel{C}j  \text{ and } i\rel{T}j \right\}.
\]
Note that $m_{CT}$ is upper-bounded by the minimum of $m_C$ and $m_T$, where $m_{CT}=m_C=m_T$ occurs if and only if $C=T$.

The \textit{correlation} $\rho(C,T)$ between the two partitions $C$ and $T$ is defined as the Pearson correlation between the indicators $\mathbbm{1}\{i\rel{C}j\}$ and $\mathbbm{1}\{i\rel{T}j\}$ for a vertex-pair $ij$ chosen uniformly at random~\citep{gosgens2021systematic}. It is given by
\begin{equation}
 \label{eq:correlation}
 \rho(C,T) \weq \frac{m_{CT}N-m_Cm_T}{\sqrt{m_C\cdot(N-m_C)\cdot m_T\cdot(N-m_T)}}.
\end{equation}
This correlation lies in the interval $[-1,1]$. The case $\rho(C,T)=1$ occurs iff $C=T$. Conversely, $\rho(C,T)=-1$ implies that $C$ and $T$ are maximally dissimilar, \textit{i.e.}, $i\rel{C}j\Leftrightarrow i\nrel{T}j$ for all $i,j\in[n]$. This can only occur when one of the two partitions corresponds to a single community of size $n$, while the other corresponds to $n$ singleton communities. The correlation coefficient has the convenient property that if $C$ is \emph{uncorrelated} to $T$, then $\rho(C,T)\approx 0$. More precisely, if $T$ is fixed with $0<m_T<N$ and $C$ is sampled from a distribution that is symmetric w.r.t. vertex permutations, then $\E[\rho(C,T)]=0$. 
In addition, if $T_n$ is a sequence of non-trivial partitions\footnote{By non-trivial partition, we exclude two border cases: the partition composed only of singletons and the partition with a single community. This ensures that $0<m_{T_n}<{n\choose 2}$.} and $C_n$ is a sequence of random partitions, each sampled from a vertex-symmetric distribution, then $\rho(C_n,T_n)\wprto0$. This is known as the \emph{constant baseline property}~\citep{gosgens2021systematic}. 

Denote the true partition by $T_n$, with which $G_n$ is sampled, and denote the estimated partition by $C_n = \cD(G_n)$. We say that $\cD$ achieves:
\begin{itemize}
    \item exact recovery if $\P_{G_n,T_n}(\rho(C_n,T_n)=1)\rightarrow1$;
    \item almost exact recovery if $\rho (C_n,T_n) \wprto 1$;
    \item weak recovery if $\rho(C_n,T_n)\ge\rho_0+o_\P(1)$ for some $\rho_0 > 0$. That is, for every $\varepsilon>0$,
    \[
    \P(\rho(C_n,T_n)<\rho_0-\varepsilon)\to0.
    \]
\end{itemize}
 Our recovery criteria differ slightly from the definitions commonly used in the literature, as we use the correlation coefficient instead of the agreement (also known as accuracy). We discuss this choice in Section~\ref{subsection:agreement-vs-correlation}.

\subsection{Agreement versus Correlation}\label{subsection:agreement-vs-correlation}
In this section, we motivate the recovery criteria based on the correlation coefficient and explain why they are more suitable in our setting than other criteria used in the literature. The recovery conditions are commonly defined using agreement rather than correlation (see~\cite[Section~2.3]{abbe2018community} for example).
Consider two partitions $T$ and $C$, each with the \textit{same} number~$k$ of communities. Two vectors $z, z' \in [k]^n$ can represent these partitions. We then define the agreement (also called accuracy) and the normalized agreement (also called overlap) between~$C$ and~$T$ as follows: 
\begin{align*}
 \agreement( C, T ) & \weq \max_{\pi \in \Sym(k)} \frac1n \sum_{i=1}^n \1( z_i = \pi(z'_i)), \\ 
 \agreementNormalized( C, T ) & \weq \max_{\pi \in \Sym(k)} \frac1k \sum_{a=1}^k \frac{ \sum_{i \in [n] } \1(z_i = a, \pi(z'_i) = a )  }{ \sum_{i \in [n] } \1(z_i = a ) },
\end{align*}
where $\Sym(k)$ is the set of permutations of $[k]$. 
A major disadvantage of (normalized) agreement is that it is only defined when $C$ and $T$ consist of the same number of communities. In practice, one typically does not know the exact number of communities, so that one cannot guarantee $C$ to have the same number of communities as $T$.
The correlation coefficient does not suffer from this defect because it is based on the representation of $T$ as a binary relation, instead of the labeling-based representation that agreement is based on.
This allows us to meaningfully measure the similarity between $C$ and $T$ even when their number of communities differ significantly.

Additionally, the correlation coefficient is one of the most effective metrics for comparing partitions. In particular, the correlation coefficient has the \emph{constant baseline} property~\cite{gosgens2021systematic}, which ensures that $\E[\rho(C,T)]=0$ whenever $C$ is uncorrelated to $T$. In contrast, if $C$ and $T$ are uncorrelated and each have $k$ communities, then $\E[\agreement(C,T)]\ge\tfrac1k$, with the exact value of the expectation depending on the sizes of the communities.
The definition of weak recovery is linked to the idea of outperforming random guessing. Therefore, to see if an agreement value is better than a random guess, we should compare it to this size-dependent expected value. In contrast, for the correlation measure, we only need to check if it is strictly positive.

Moreover, unlike agreement-based metrics, the correlation coefficient avoids the need to minimize over permutations of the community labels, which makes some arguments in the proofs tedious. Finally, the correlation coefficient is simple enough to facilitate rigorous theoretical analysis, making it well-suited for both practical and theoretical studies.

The Adjusted Mutual Information (AMI) is another widely used metric for comparing partitions that has many desirable properties~\citep{gosgens2021systematic,vinh2009information}. However, theoretically analyzing the AMI of a partition produced by an algorithm relative to the true partition is highly challenging. Indeed, the AMI is an `adjusted-for-chance' metric, and this adjustment introduces a term that complicates the theoretic analysis~\citep{vinh2009information}. Additionally, computing the AMI has a time complexity of $\bigO(n\cdot k)$, where $k$ is the number of communities~\citep{romano2016adjusting}. In cases where $k = \bigO(n)$—such as those considered in~\cref{thm:weak}—the time complexity becomes $\bigO(n^2)$, which may even be higher than the complexity of Algorithm~\ref{alg:commonNeighborsPartitioning}. In contrast, the correlation coefficient has time complexity $\bigO(n)$.
For these reasons, we formulate our recovery criteria in terms of the correlation coefficient rather than AMI.

\subsection{Random Partitions}
\label{subsection:arbitrary_partitions}

The main contribution of this work is to establish exact, almost exact, and weak recovery conditions in the planted partition model where the latent partition has an arbitrary number of communities with arbitrary sizes. In this section, we highlight some examples of random partitions.

A \textit{single community partition} consists of a single community of size $s_n$, formed by selecting $s_n$ vertices uniformly at random, while each of the remaining $n-s_n$ vertices form singleton communities.

In a \textit{balanced partition}, the vertices are divided into $k$ communities of equal size $s$, for $k\cdot s\le n$.
We place the remaining $n-k\cdot s$ vertices into singleton communities, so that the partition consists of $n-k(s-1)$ communities.
These partitions are denoted by $T_n\sim\balanced(n,k,s)$.
For $k=1$, this corresponds to a single community partition.

In the \textit{uniform partition}, $T_n$ is chosen uniformly from all partitions of $[n]$. This distribution has been extensively studied, and many of its asymptotic properties are known~\citep{harper1967stirling,pittel1997random}. For example, it is known that the number of communities grows as $n/\log n$. We denote this distribution by $T_n\sim\text{Uniform}(n)$.

A \textit{multinomial partition} is constructed by assigning each vertex $i\in[n]$ independently to a community $a\in[k_n]$ with probability $\pi_a$, where $(\pi_a)_{a\in[k_n]}$ is a given probability sequence.
By specifying different probability sequences, this allows one to construct a broad range of partition distributions. 

As mentioned in the introduction, community sizes typically follow a power-law distribution. Such partitions can be sampled from a multinomial partition as follows. Let $\tau>2$ and consider a sequence of i.i.d. $\text{Exp}(1)$ random variables $(X_a)_{a\in[k_n]}$ and take the multinomial partition corresponding to the random probability sequence given by
 \[
  \Pi_a \weq \frac{e^{X_a/\tau}}{\sum_{b\in[k_n]}e^{X_b/\tau}}.
 \]
 We show in Theorem~\ref{thm:power-law-rescaled} that the sizes of the communities obtained from such random partition follow a power-law distribution. 

\subsection{Related Works and Special Cases of the PPM}
\label{subsection:special_cases_PPM}


The planted partition model encompasses several well-known special cases. When the partition~$T_n$ consists of a single community of size $s_n \ge 2$ and all other communities are singletons (size 1), we recover the \textit{planted dense subgraph} model. The \textit{planted clustering model} arises when $T_n$ contains $k$ communities of equal size $s_n \ge 2$ while the remaining $n-s_n \cdot k_n$ vertices are singletons. When all community sizes are greater or equal to 2, we recover the \textit{stochastic block model} (SBM) with homogeneous interactions. 

When the vertex set is partitioned into $k = \Theta(1)$ communities each of size $\Theta(n)$, tight conditions both for exact and weak recovery are available in the literature, and we refer to~\citet{abbe2018community} for a review. In the following, we only highlight the results when $k$ can grow with $n$. 

\medskip
\textbf{Exact recovery.} \citet{chen2016statistical} establish several key results for the impossibility and possibility of exact recovery when the communities are of equal size $n/k$ and~$k$ grows arbitrarily. Their paper highlights various phase transitions and--up to unspecified constants--precisely characterize those transitions. The problem progresses through four distinct stages: (1) being statistically unsolvable, (2) becoming statistically solvable but computationally expensive, (3) transitioning to being solvable in polynomial time, and finally (4) being solvable by a simple common-neighbor counting algorithm. 
However, the equal-size community assumption is limiting, as we highlighted in the introduction: communities in real networks can have sizes with different orders of magnitude.
Moreover, the algorithms in~\citet{chen2016statistical} require knowledge of the number of communities and of $p_n, q_n$. In contrast, we establish that Algorithm~\ref{alg:commonNeighborsPartitioning}, a simple common-neighbor counting algorithm, can achieve exact recovery even when the communities have arbitrary sizes and the number of communities is unknown. 

\medskip
\textbf{Weak recovery.}
 Weak recovery for multinomial partitions with a uniform prior $\pi_a = 1/k_n$ has been investigated in recent preprints \citep{carpentier2025phase,chinstochastic,luo2023computational}. In particular, using non-backtracking walk statistics, \citet{chinstochastic} shows that weak recovery is achievable below the Kesten--Stigum threshold in the regime $\sqrt{n} \ll k_n \ll n$, $p_n = \Theta(k_n / n)$ and $q_n = \Theta(1/n)$, and they propose a conjecture for the weak recovery threshold in this regime. The authors also propose a conjecture characterizing the precise weak recovery threshold in this setting. Supporting evidence for this conjecture is provided in \citet{carpentier2025phase}, which analyzes the problem within the low-degree polynomial framework and further extends the weak recovery region to denser regimes by incorporating clique-based statistics.

Moreover, \citet{luo2023computational} establishes low-degree hardness results for weak recovery in SBMs with multinomial partitions under a uniform prior $\pi_a = 1/k$. Specifically, \cite[Theorem~5]{luo2023computational} shows that when $k \le \sqrt{n}$ and $\frac{n(p_n-q_n)^2}{k^2 q_n(1-p_n)} = o(1)$, no low-degree polynomial algorithm can achieve weak recovery. In the sparse regime $q_n = \Theta(n^{-1})$, this condition simplifies to $p_n \ll k/n$, in which case a typical vertex has no neighbors within its own community. When $k \ge \sqrt{n}$, \cite[Theorem~5]{luo2023computational} instead yields the impossibility condition $p_n^2 \ll q_n$, which further highlights that $p_n$ and $q_n$ cannot be asymptotically of the same order if weak recovery is to be achievable. 

\medskip
\textbf{Planted dense subgraph} \citet{chen2016statistical} establishes several key results for the impossibility and
possibility of exact recovery when the partition comprises a single community of size $s_n \ge 2$ and all other communities are singletons. However, they establish the possibility of exact recovery under the additional assumption $s \ge \log n$ while \cref{thm:exact} can be applied for $s_n \gg 1$. 

\citet{Schramm_Wein_2022} establishes criteria for the success and failure of low-degree polynomials in achieving weak recovery. In particular, polynomials of degree~$n^{\Omega(1)}$ fail at weak recovery if $\frac{p_n-q_n}{ \sqrt{q_n(1-p_n)}} \ll \min \{ 1, \frac{\sqrt{n}}{s_n} \}$. Conversely, polynomials of degree~$\bigO(\log n)$ succeed at weak recovery if $\frac{p_n-q_n}{\sqrt{q_n}} \gg \frac{\sqrt{n}}{s_n}$ and $p_n s_n =\omega(1)$. In the regime $p_n = \Theta(n^{-a})$, $q_n = \Theta(n^{-a})$ and $s_n=\Theta(n^{b})$ for constants $a\in(0,2)$ and $b \in (0,1)$, this implies low-degree hardness of recovery at degree $n^{\Omega(1)}$ whenever $b<(1 + a)/2$, while low-degree polynomials succeed whenever $b>(1 + a)/2$. For related results on weak recovery in the planted dense subgraph model and its connection to the planted clique problem, we refer to~\citet{hajek2015computational}. 

\medskip
\textbf{Estimation of the number of communities}
Several methods have been proposed to address the model selection problem, specifically the estimation of the number of communities $k$. However, most theoretical works assume that the number of communities is either finite or, at best, grows slowly (typically, $k = o(\log n)$). 
The only work that imposes no restrictions on the number of communities appears to be \citet{cerqueira2020estimation}. In this paper, the author proves that an information-theoretic criterion, based on the Minimum Description Length principle, is strongly consistent. However, computing this estimator is NP-hard. In contrast, in the regimes where Algorithm~\ref{alg:commonNeighborsPartitioning} achieves exact recovery, it provides an efficient method for recovering both the communities and their number, without prior knowledge of $k$. Moreover, when the assumptions of \cref{thm:refinement-generalized} are satisfied, the partition inferred by Algorithm~\ref{alg:commonNeighborsPartitioning} is composed of more communities than the true partition, which provides an upper-bound on $k$.

\section{Theoretical Framework and Algorithm}
\label{section:framework_algorithm}
In this section, we present Diamond Percolation and discuss some of its properties. In addition, we formulate the assumptions that we make in order to prove the recovery criteria in Sections~\ref{section:arbitrary_partitions} and~\ref{section:power_law_partitions}. 

\subsection{Diamond Percolation}
\label{subsection:algorithm}

 Consider an unweighted and undirected graph $G$ with vertex set $[n]$, and let $W_{ij}$ denote the number of common neighbors between $i$ and $j$ (i.e., the number of \emph{wedges} from $i$ to $j$). That is, 
 \[
  W_{ij} \weq \#\left\{u\in[n]\setminus\{i,j\}\ :\ u\rel{G} i \text{ and } u \rel{G} j\right\}.
 \]

 We consider Algorithm~\ref{alg:commonNeighborsPartitioning} for detecting communities. This algorithm first constructs a graph $G^*$ such that $i\rel{G^*}j$ iff $i\rel{G}j$ and $W_{ij}\ge2$. In other words, $G^*$ only keeps the edges of $G$ that correspond to the middle edge of some \emph{diamond}, i.e., if it is part of at least two triangles. We then consider the partition $C$ formed by the connected components of $G^*$ and return these as the detected communities. In the rest of the paper, we denote Algorithm~\ref{alg:commonNeighborsPartitioning} by $\cD$ and denote the resulting partition into communities by $C = \cD(G)$. Note that the algorithm $\cD(\cdot)$ does not require knowledge of any model parameters. Algorithm~\ref{alg:commonNeighborsPartitioning} is illustrated in \cref{fig:example}.
\begin{figure}[!ht]
    \centering
    \includegraphics[width=0.5\linewidth]{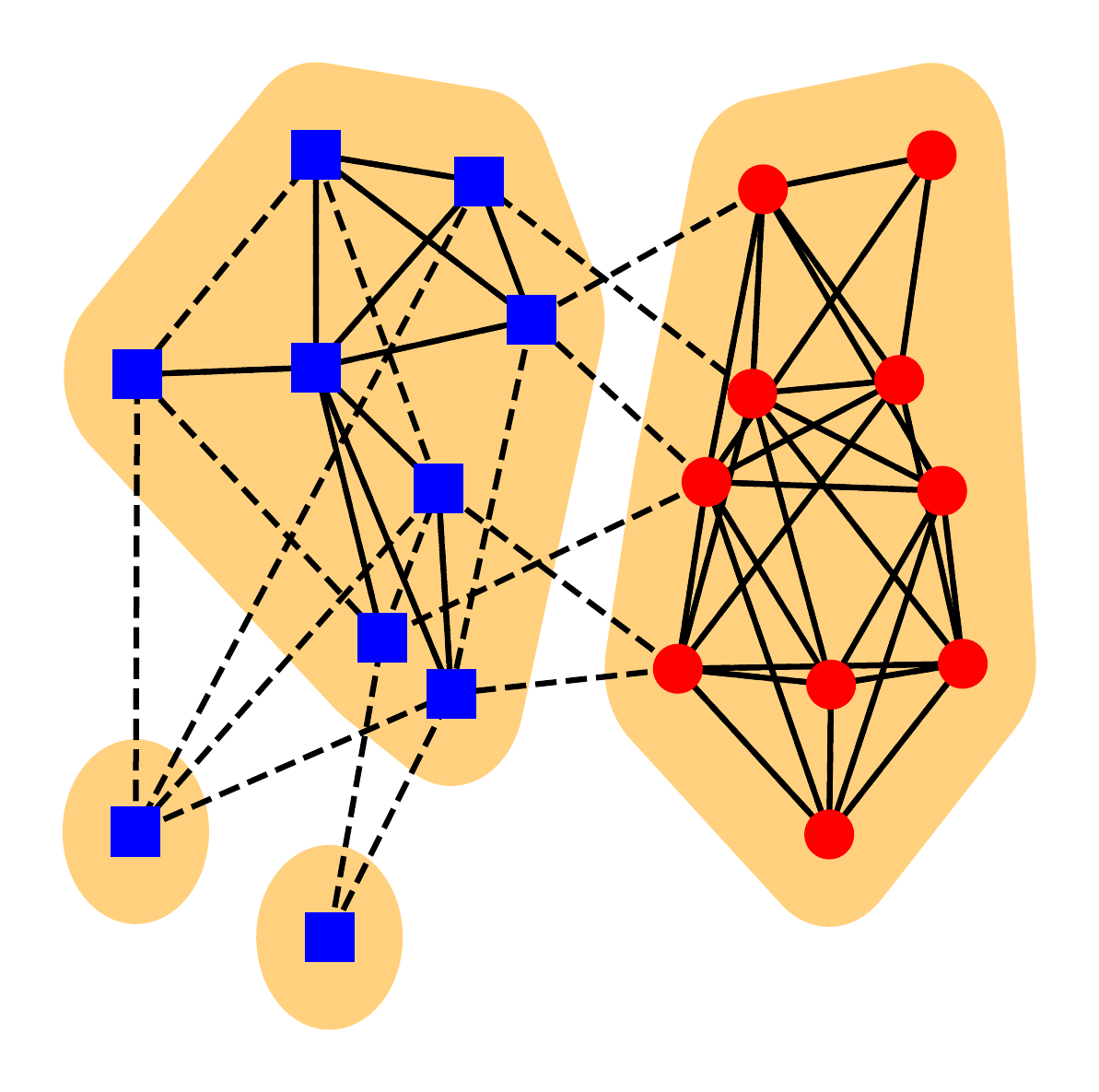}
    \caption{Algorithm~\ref{alg:commonNeighborsPartitioning} is illustrated on a PPM consisting of two equally-sized communities of size $10$ each, with $p=\tfrac12$ and $q=\tfrac1{20}$. The true communities correspond to the red circles and blue squares. The solid lines are the edges of $G^*$, while the dashed lines are the edges of $G$ that are not retained in $G^*$. The orange shaded regions represent the detected communities. We see that the two communities are correctly separated, but that two vertices are incorrectly isolated.}
    \label{fig:example}
\end{figure}

\begin{algorithm}[!ht]
\small
\DontPrintSemicolon
\KwIn{Graph $G = ([n], E)$}
\BlankLine
Let $E^* = \emptyset$
\For{ $ij \in E$}{
Let $W_{ij} =\#\left\{u\in[n]\setminus\{i,j\}\ :\ u\rel{G} i \text{ and } u \rel{G} j\right\} $ be the number of common neighbors between $i$ and $j$. 

\If{ $W_{ij} \ge 2$}{
$E^* = E^* \cup \{ ij \}$
}
}
\BlankLine
\KwOut{Partition formed by the connected component(s) of~$G^* = ([n], E^*)$.} 
\caption{Diamond Percolation.}
\label{alg:commonNeighborsPartitioning}
\end{algorithm}

The following lemma provides the space and time complexity of Algorithm~\ref{alg:commonNeighborsPartitioning}. 
\begin{lemma}\label{lemma:running_time_algo}
    Algorithm~\ref{alg:commonNeighborsPartitioning} has $\bigO(n+|E|)$ space complexity and $\bigO(n+\sum_{i\in[n]}d_i^2)$ time complexity, where $d_i$ denotes the degree of vertex $i$ in $G$.
\end{lemma}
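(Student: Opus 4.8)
The plan is to bound the cost of the three phases of Algorithm~\ref{alg:commonNeighborsPartitioning} separately: (i) reading and storing $G$; (ii) computing $W_{ij}$ for every edge $ij\in E$ and assembling $E^*$; and (iii) extracting the connected components of $G^*=([n],E^*)$.

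For the space bound, I would store $G$ in adjacency-list form, using $\Theta(n+|E|)$ words. The only auxiliary structures are a length-$n$ integer scratch array used for common-neighbor counts (see below), the edge set $E^*$, which satisfies $|E^*|\le|E|$ since $E^*\subseteq E$, and a BFS queue or union-find structure for phase (iii), which fits in $O(n)$. Hence the total working space is $O(n+|E|)$.

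For the time bound, the core is a standard wedge-counting trick. Process the vertices $i\in[n]$ one at a time: initialise a counter array $\mathrm{cnt}[\cdot]\equiv 0$, then for every neighbour $u$ of $i$ and every neighbour $v$ of $u$, increment $\mathrm{cnt}[v]$; afterwards $W_{ij}=\mathrm{cnt}[j]$ up to an $O(1)$ correction for the terms $v\in\{i,j\}$, for each neighbour $j$ of $i$, so membership of every edge incident to $i$ in $E^*$ is decided. Resetting only the touched entries of $\mathrm{cnt}$, the work for vertex $i$ is $\Theta\!\big(1+\sum_{u\sim i}d_u\big)$; summing over $i$ and swapping the order of summation gives $\sum_i\sum_{u\sim i}d_u=\sum_u d_u^2$, so phases (i)--(ii) together cost $O\big(n+\sum_i d_i^2\big)$. (An even simpler per-edge variant computes each $W_{ij}$ in time $O(d_i+d_j)$ for a total of $\sum_{ij\in E}(d_i+d_j)=\sum_i d_i^2$; either bookkeeping works.)

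Finally, phase (iii) is a BFS/DFS on $G^*$ in time $O(n+|E^*|)=O(n+|E|)$. To fold this into the stated bound, note $|E|=\tfrac12\sum_i d_i\le\tfrac12\sum_i d_i^2$ since $d_i^2\ge d_i$ for every integer $d_i\ge 0$, while the residual $O(n)$ accounts for isolated vertices; adding the three contributions gives total time $O\big(n+\sum_i d_i^2\big)$. The only delicate point — and the main, albeit minor, obstacle — is the counter-array bookkeeping: one must reset only the entries touched while processing vertex $i$, not the whole array, since otherwise phase (ii) would cost $\Theta(n)$ per vertex and the bound would degrade to $O\big(n^2+\sum_i d_i^2\big)$.
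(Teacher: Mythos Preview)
Your proposal is correct and follows essentially the same approach as the paper: both arguments account for the per-edge work by the identity $\sum_{ij\in E}(d_i+d_j)=\sum_{i\in[n]}d_i^2$, and both handle the connected-components phase via BFS/DFS in $O(n+|E|)$. The only cosmetic differences are that the paper computes each $W_{ij}$ by hash-set intersection in $O(\min\{d_i,d_j\})\le d_i+d_j$ rather than via your counter-array wedge enumeration, and that you add the explicit observation $|E|\le\tfrac12\sum_i d_i^2$ to absorb the BFS cost into the stated bound.
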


 Finally, we note that we use the name \textit{Diamond Percolation} to refer to Algorithm~\ref{alg:commonNeighborsPartitioning} in analogy with local percolation-based clustering, though no percolation threshold is analyzed in this work. 

\subsection{Technical Tools for Studying Algorithm~\ref{alg:commonNeighborsPartitioning}}
\label{subsection:proof_overview}

In this section, we discuss the main tools used to prove the Theorem~\ref{thm:exact},~\ref{thm:almost_exact},~\ref{thm:weak-large} and~\ref{thm:weak}. The full proofs are provided in Appendix~\ref{appendix:proof-arbitrary-partitions}.

Recall that we write $i \rel{T} j$ to indicate that two vertices $i$ and $j$ belong to the same community according to the partition $T$. Given two partitions $C$ and $T$, we say that $C$ is a \emph{refinement} of $T$, denoted $C \preceq T$, if $i\rel{C}j$ implies $i\rel{T}j$ for all $i,j\in[n]$. This condition defines a partial order on the set of partitions. 

To establish that Algorithm~\ref{alg:commonNeighborsPartitioning} recovers the true partition $T_n$, we first show that the partition $C_n$ produced by Algorithm~\ref{alg:commonNeighborsPartitioning} is, with high probability, a refinement of $T_n$. Ensuring that $C_n$ is a refinement of $T_n$ requires the following assumption. We recall that the random variable $S_n$ represents the size of the community to which a uniformly randomly chosen vertex belongs.

\begin{assumption}[Size-sparsity assumption]
\label{assumption:size-sparsity}
  We assume $n^2 \E [S_n^2] q_n^3 p_n^2  = o(1)$ and $q_n=o(n^{-4/5})$. 
\end{assumption}

\begin{theorem}\label{thm:refinement-generalized}
  Let $G_n\sim\PPM(T_n,p_n,q_n)$, such that Assumption~\ref{assumption:size-sparsity} holds. Then, the partition $C_n$ returned by Algorithm~\ref{alg:commonNeighborsPartitioning} satisfies $\P(C_n\preceq T_n)\rightarrow 1$.
\end{theorem}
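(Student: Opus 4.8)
The plan is to show that, with high probability, the edge set $E^*$ of $G^*$ contains no inter-community edge of $G_n$; this immediately gives $C_n \preceq T_n$, since every connected component of $G^*$ is then contained in a single community of $T_n$. An edge $ij$ of $G_n$ with $i \nrel{T_n} j$ survives in $G^*$ precisely when $W_{ij} \ge 2$, so it suffices to prove that
\[
\P\!\left( \exists\, ij \in E(G_n) \ :\ i\nrel{T_n}j \text{ and } W_{ij}\ge 2 \right) \to 0.
\]
First I would fix a pair $i,j$ in distinct communities $a,b$ of sizes $s_a,s_b$ and decompose $W_{ij}$ as the number of common neighbors lying inside $a\cup b$ plus the number lying outside. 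As sketched after Assumption~\ref{assumption:size-sparsity}, conditionally on $i\rel{G_n}j$ the first count is stochastically dominated by a $\Binom(s_a+s_b-2,\,p_nq_n)$ variable and the second is exactly $\Binom(n-s_a-s_b,\,q_n^2)$, and these two counts are independent given the community structure. Using the elementary bound $\P(\Binom(m,r)\ge 1)\le mr$ together with a union bound over the two ways to realize $W_{ij}\ge2$ (both common neighbors outside, or at least one inside), I would obtain
\[
\P\!\left(W_{ij}\ge 2 \ \middle|\ i\rel{G_n}j,\ i\nrel{T_n}j\right) \;=\; \bigO\!\big( (s_a+s_b)^2 p_n^2 q_n^2 + n^2 q_n^4 \big),
\]
where the first term covers the events with an inside common neighbor (bounded by $(\Binom(s_a+s_b-2,p_nq_n))^2$ in expectation, i.e.\ $\bigO((s_a+s_b)^2p_n^2q_n^2)$ once one handles the diagonal term $(s_a+s_b)p_nq_n$ — here I would use $q_n=o(n^{-4/5})$, hence $p_nq_n\to0$, to absorb the linear term into the quadratic one after multiplying by the relevant edge count) and the second covers two common neighbors both outside.

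Next I would sum over all inter-community pairs. Since $\P(i\rel{G_n}j)=q_n$ for such a pair, and writing $I,J$ for two independent uniform vertices, the expected number of offending edges is at most a constant times
\[
n^2 q_n \cdot \E\!\left[ \big(|T_n(I)|+|T_n(J)|\big)^2 p_n^2 q_n^2 + n^2 q_n^4 \ \middle|\ I\nrel{T_n}J \right]
\;\le\; \bigO\!\left( n^2 q_n \big( \E[S_n^2]\, p_n^2 q_n^2 + n^2 q_n^4 \big) \right),
\]
using $(x+y)^2\le 2x^2+2y^2$ and the fact that $|T_n(I)|$ has the same law as $S_n$ under a uniform vertex, and that conditioning on $I\nrel{T_n}J$ only changes constants when $\E[S_n^2]=o(n)$-type control is available (more carefully, one can drop the conditioning at the cost of a factor $(1-m_{T_n}/N)^{-1}$, which is $\bigO(1)$ under Assumption~\ref{assumption:size-sparsity} since that assumption forces $m_{T_n}=o(N)$). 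The first term in the bracket is $n^2\E[S_n^2]p_n^2q_n^3 = o(1)$ by the first part of Assumption~\ref{assumption:size-sparsity}, and the second is $n^4 q_n^5 = o(1)$ by $q_n=o(n^{-4/5})$. Hence the expected number of inter-community edges with $W_{ij}\ge2$ tends to $0$, and Markov's inequality finishes the argument.

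The main obstacle I anticipate is making the stochastic-domination and independence bookkeeping fully rigorous: the events $\{i\rel{G_n}j\}$, $\{u\rel{G_n}i\}$, $\{u\rel{G_n}j\}$ involve distinct edges and so are genuinely independent, but one must be careful that conditioning on the community sizes $s_a,s_b$ (themselves random if $T_n$ is random) is handled by first working conditionally on $T_n$ and only taking the expectation over $T_n$ at the end, so that $\E[S_n^2]$ appears correctly. A secondary subtlety is the linear-in-$(s_a+s_b)$ term from $\E[\Binom(s_a+s_b-2,p_nq_n)^2] = \mathrm{Var}+\mathrm{mean}^2 = \bigO((s_a+s_b)p_nq_n + (s_a+s_b)^2p_n^2q_n^2)$; this forces one to also verify that $n^2 q_n \cdot \E[S_n]\, p_n q_n = n^2 p_n q_n^2\,\E[S_n] = o(1)$, which follows from $\E[S_n]\le \E[S_n^2]^{1/2}$, Cauchy–Schwarz, and the two parts of Assumption~\ref{assumption:size-sparsity} (indeed $n^2 p_n q_n^2 \E[S_n^2]^{1/2}\le (n^2 p_n^2 q_n^3 \E[S_n^2])^{1/2}\cdot (n^2 q_n)^{1/2} = o(1)\cdot o(n^{1/10}) \cdot \ldots$, so a small additional check using $q_n=o(n^{-4/5})$ closes it). Everything else is a routine union bound and Markov estimate.
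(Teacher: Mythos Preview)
Your overall strategy is the paper's: show that no inter-community edge $ij$ satisfies $W_{ij}\ge2$, by a first-moment (Markov) bound after decomposing $W_{ij}$ into common neighbors inside $T_n(i)\cup T_n(j)$ versus outside. The stochastic-domination and conditioning concerns you flag are handled exactly as you suggest (work conditionally on $T_n$, then average), and cause no trouble.

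There is, however, a genuine gap in the per-pair bound. Your two-term estimate
\[
\cP{W_{ij}\ge2}{i\nrel{T_n}j}=\bigO\big((s_a+s_b)^2p_n^2q_n^2+n^2q_n^4\big)
\]
is in fact correct, but not for the reason you give. The event ``at least one common neighbor inside'' and $W_{ij}\ge2$ includes the cross case $\{W_{ij}^+=1,\ W_{ij}^-\ge1\}$, which has probability $\bigO\big((s_a+s_b)p_nq_n\cdot nq_n^2\big)=\bigO\big(n(s_a+s_b)p_nq_n^3\big)$; this is \emph{not} dominated by $(s_a+s_b)^2p_n^2q_n^2$ in general. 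The paper carries the three terms
\[
\bigO\big((s_a+s_b)^2p_n^2q_n^2\big)+\bigO\big(n(s_a+s_b)p_nq_n^3\big)+\bigO\big(n^2q_n^4\big)
\]
through the summation, obtaining $n^2q_n^3p_n^2\E[S_n^2]$, $n^3q_n^4p_n\E[S_n]$, $n^4q_n^5$, and then disposes of the middle one by noting it is at most the geometric mean of the outer two (using $\E[S_n]\le\sqrt{\E[S_n^2]}$). Equivalently, and this is the cleanest fix for your write-up, you can observe already at the per-pair level that $n(s_a+s_b)p_nq_n^3$ is exactly the geometric mean of the other two terms, hence bounded by their arithmetic mean; that recovers your two-term bound directly.

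Your ``secondary subtlety'' is a red herring. Bounding $\P(W_{ij}^+\ge2)$ via $\E[(W_{ij}^+)^2]$ introduces an unnecessary variance term; simply use the union bound over unordered pairs of inside vertices (equivalently the second factorial moment) to get $\P(W_{ij}^+\ge2)\le\binom{s_a+s_b-2}{2}(p_nq_n)^2$ without any linear piece. Moreover, the quantity $n^2p_nq_n^2\E[S_n]$ you attempt to control is not the relevant cross contribution (that is $n^3p_nq_n^4\E[S_n]$), and your Cauchy--Schwarz sketch does not close: $(n^2p_n^2q_n^3\E[S_n^2])^{1/2}(n^2q_n)^{1/2}=o(1)\cdot o(n^{3/5})$ is not $o(1)$. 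Once you insert the AM-GM step above, this entire paragraph can be deleted.
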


Assumption~\ref{assumption:size-sparsity} ensures that, with probability tending to 1, any pair of vertices connected by an edge and belonging to different communities has at most one common neighbor. To see this, consider two vertices $i$ and $j$ belonging to different communities, say $a$ and $b$, with respective sizes $s_a$ and $s_b$. The number of common neighbors of $i$ and~$j$ that belong to community $a$ or $b$ is distributed as $\Bin(s_a+s_b-2,p_n q_n)$, and the number of common neighbors that belong to neither $a$ nor $b$ follows $\Bin(n-s_a-s_b, q_n^2)$. Consequently, the probability that $i$ and~$j$ share more than two common neighbors is at most $\bigO((s_a+s_b)^2 p_n^2 q_n^2+ n^2 q_n^4)$. Because there are $\bigO(n^2 q_n)$ pairs of vertices connected by an edge and belonging to different communities, the probability that at least one such pair has more than two common neighbors is vanishing if $n^2 q_n \left( \E [S_n^2] p_n^2 q_n^2  + n^2 q_n^4 \right) = o(1)$. This condition is equivalent to Assumption~\ref{assumption:size-sparsity}. A formal proof is provided in Appendix~\ref{appendix:proof_thm:refinement-generalized}. 

    \cref{assumption:size-sparsity} provides the main motivation for setting the threshold at \emph{two overlapping triangles}. On the one hand, if we were to keep all edges that participate in triangles, then we would have a nonvanishing\footnote{Note that in an \ER graph with density $p_n=\lambda/n$, the number of triangles converges in distribution to $\text{Poi}(\lambda^3/6).$} number of incorrect edges in $G^*$. On the other hand, if we only kept edges participating in three or more triangles, we would have less correct edges in $G^*$, leading to poorer recovery results.

As an example, consider the particular case where $p_n = \Theta(1)$ and $q_n=\Theta(n^{-1})$. Under this setting, Assumption~\ref{assumption:size-sparsity} simplifies to $\E[S_n^2]=o(n)$, a condition that holds for many types of partitions. For instance, in the case of balanced communities of size $s$, \textit{i.e.,} when  $T_n \sim \balanced(n,\lfloor n/s\rfloor,s)$, this condition reduces to $s=o(\sqrt{n})$. Moreover, when $T_n \sim \text{Uniform}(n)$, we have $\E[S_n^2]=\bigO(\log^2 (n))$~\citep{gosgens2024erd} and the condition is automatically satisfied. 

    More generally, whenever $S_n$ is concentrated around its mean, \emph{i.e.}, when $\E[S_n^2]\sim\E[S_n]^2$, then the expected internal degree of a random vertex is $d_{\text{in}}=p_n\E[S_n-1]$, while the expected external degree of a random vertex is $d_{\text{ex}}=q_n\cdot(n-\E[S_n])$, and \cref{assumption:size-sparsity} can be rewritten to $d_{\text{in}}^2d_{\text{ex}}^3\ll n$ and $d_{\text{ex}}\ll n^{1/5}.$ 
    The analysis of community detection algorithms typically requires some upper bound on the external degree and some lower bound on the internal degree. In contrast, we need upper bounds for both internal and external degrees to avoid diamonds between different communities. \cref{section:recovery_criteria} provides lower bounds on $d_{\text{in}}$ that are needed to ensure sufficiently many diamonds inside the communities.

Obtaining a refinement of the true communities in itself is neither hard nor informative. For instance, the partition $\{ \{1\}, \{2\}, \cdots, \{n\} \}$, which consists solely of singletons, is a refinement of \textit{any} partition. Therefore, $C_n \preceq T_n$ alone does not guarantee good performance in terms of the correlation coefficient $\rho(C_n,T_n)$ as defined in~\eqref{eq:correlation}. To ensure that $C_n \preceq T_n$ translate into a result involving $\rho(C_n,T_n)$, we require the following assumption on the planted partition $T_n$.
\begin{assumption}[Concentration of $m_{T_n}$]\label{assumption:concentration_mT}
 For $m_{T_n} = \#\{ij\ :\ i\rel{T_n}j\}$, we assume that $1\ll\E[m_{T_n}]\ll n^2$ and 
    \[
    \frac{m_{T_n}}{\E[m_{T_n}]} \wprto1.
    \]
\end{assumption}
The assumption $\frac{m_{T_n}}{\E[m_{T_n}]} \wprto 1$ holds for many classes of random partitions, including balanced partitions, uniform partitions~\citep{gosgens2024erd}, and the power-law partitions studied in \cref{section:power_law_partitions}.

The following lemma simplifies the asymptotics of the correlation coefficient $\rho(C_n,T_n)$ when $C_n \preceq T_n$. 
\begin{lemma}\label{lem:correlation}
    Let $C_n,T_n$ be sequences of partitions satisfying $\P(C_n\preceq T_n)\to1$ and where $T_n$ satisfies \cref{assumption:concentration_mT}, then
    \[
  \rho(C_n,T_n) - \sqrt{\frac{m_{C_n}}{\E[m_{T_n}]}} \ \stackrel{\P}{\longrightarrow} \ 0.
 \]
\end{lemma}
 \cref{lem:correlation} tells us that when $C_n$ refines $T_n$, the correlation essentially measures the proportion of correctly grouped vertex pairs.


\cref{thm:refinement-generalized} and \cref{lem:correlation} are crucial for demonstrating both weak and almost exact recovery.

\section{Recovery of Planted Partitions}
\label{section:arbitrary_partitions}

In this section, we present the conditions for Algorithm~\ref{alg:commonNeighborsPartitioning} to recover a planted partition. Sections~\ref{subsection:exact_recovery}, \ref{subsection:almost_exact_recovery}, and~\ref{subsection:weak_recovery} provide the results and several examples for exact, almost exact, and weak recovery, respectively. All proofs for this section can be found in Appendix~\ref{appendix:proof-arbitrary-partitions}.

Throughout this section, $G_n\sim\PPM(T_n,p_n,q_n)$ is a PPM with vertex set $[n]$, planted partition~$T_n$, internal connection probability $p_n$, and external connection probability~$q_n$.

\subsection{Exact Recovery}
\label{subsection:exact_recovery}

 To derive a consistency result for exact recovery, we impose a lower bound on the size of the smallest non-singleton community. 

\begin{assumption}[Minimum community size]
\label{assumption:minimum_size}
 There exists some sequence $s_n^{(\min)}\to\infty$ so that 
    \[
    \P \left( \exists i \in [n] \colon \ 1 \, < \, |T_n(i)| \, < \, s_n^{(\min)} \right) \rightarrow 0.
    \]
\end{assumption}

\cref{assumption:minimum_size} ensures that no community becomes disconnected, which would otherwise make exact recovery impossible. 
The following theorem states that if every community is sufficiently large and has enough internal edges, the algorithm reconstructs the true partition exactly. 

\begin{theorem}
\label{thm:exact} Consider a graph $G_n\sim\text{PPM}(T_n,p_n,q_n)$, where the sequence of random partitions $T_n$ satisfy Assumptions~\ref{assumption:size-sparsity} and~\ref{assumption:minimum_size}, and 
\[
p_n=\sqrt{\frac{\log\E[m_T]+\log\log\E[m_T]+\omega(1)}{s_n^{(\min)}}}.
\]
 Then Algorithm~\ref{alg:commonNeighborsPartitioning} achieves exact recovery.
\end{theorem}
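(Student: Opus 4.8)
\textbf{Proof proposal for Theorem~\ref{thm:exact}.}

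The plan is to combine Theorem~\ref{thm:refinement-generalized}, which already gives $C_n\preceq T_n$ w.h.p.\ under Assumption~\ref{assumption:size-sparsity}, with a complementary argument showing that w.h.p.\ no non-singleton community of $T_n$ gets split by the algorithm, so that in fact $C_n = T_n$. Since exact recovery in the $\rho$-sense means $\P(\rho(C_n,T_n)=1)\to1$, and $\rho=1$ iff $C_n=T_n$, it suffices to show $\P(C_n=T_n)\to1$. Given the refinement statement, the only remaining failure mode is that for some community $a$ of $T_n$ with $|a|\ge s_n^{(\min)}$, the induced subgraph $G^*[a]$ is disconnected. (Assumption~\ref{assumption:minimum_size} removes the degenerate case of communities of size strictly between $1$ and $s_n^{(\min)}$; singletons cannot be split.) So I would reduce the whole theorem to: w.h.p., for every community $a$ with $s_n^{(\min)}\le |a|$, $G^*[a]$ is connected.

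The core of the argument is therefore a connectivity estimate for the "diamond graph" restricted to a single community. Inside a community of size $s$, the graph $G[a]$ is an Erd\H{o}s--R\'enyi graph $G(s,p_n)$, and $G^*[a]$ keeps an edge $ij$ iff $ij\in G$ and $i,j$ have at least two common neighbors — and here I must be careful: common neighbors can come from \emph{outside} community $a$ as well, via $q_n$-edges, which only helps connectivity, so I can lower-bound by considering only within-community common neighbors, i.e.\ the "2-triangle subgraph" of $G(s,p_n)$. With $p_n = \sqrt{(\log \E[m_T] + \log\log\E[m_T] + \omega(1))/s_n^{(\min)}}$, for a community of size $s \ge s_n^{(\min)}$ we get $s\,p_n^2 \ge \log\E[m_T] + \log\log\E[m_T] + \omega(1) \to\infty$, so the expected number of common neighbors of a within-community pair, $\approx (s-2)p_n^2$, diverges; this is exactly the regime where the 2-triangle condition is satisfied for essentially all intra-community edges and $G^*[a]$ inherits connectivity from $G[a]$. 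I would (i) show that $G(s,p_n)$ itself is connected w.h.p.\ in this regime — note $s p_n^2 \to\infty$ forces $p_n \gg (\log s)/s$ once $s\to\infty$ is used carefully, actually one needs $s p_n \ge \log s + \omega(1)$; since $p_n = \sqrt{\cdot/s_n^{(\min)}}$ and $s\ge s_n^{(\min)}$, $s p_n \ge \sqrt{s \cdot (\log\E[m_T]+\cdots)} \gg \log s$ because $\log \E[m_T]$ can be as small as $\omega(1)$ but $s\to\infty$ dominates — this needs checking but should go through; and (ii) show that the deletion of edges failing the 2-triangle test does not disconnect it, by a union bound over edges (or over small vertex cuts) using a Chernoff bound: $\P(W_{ij} < 2 \mid i\sim j) \le \P(\Bin(s-2,p_n^2) < 2) \le e^{-\Omega(s p_n^2)}$, and the number of relevant edges/communities is controlled by $\E[m_T]$. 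The precise exponent is where the strange-looking formula for $p_n$ earns its keep: the $+\log\log\E[m_T]$ and the diverging $\omega(1)$ are tuned so that (number of communities or edges) $\times\, \P(\text{a given edge fails}) \to 0$ after the union bound — this is the same phenomenon as the $\log n + \log\log n$ threshold for double coverage / connectivity of random graphs, which is presumably why it appears here. Finally I union-bound over all communities; since $T_n$ has at most $\E[m_T]/\binom{s_n^{(\min)}}{2}$-ish non-singleton communities with sizes summing appropriately, and using Assumption~\ref{assumption:concentration_mT}-type control on $m_{T_n}$ (or directly that $\E[m_T]$ governs the count), the total failure probability vanishes.

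The main obstacle I anticipate is step (ii): making the connectivity-preservation argument rigorous with only $s p_n^2 \to \infty$ (not $s p_n^2 \ge \log s$). A naive union bound "no intra-community edge fails the 2-triangle test" requires $\binom{s}{2} p_n \cdot e^{-\Omega(sp_n^2)} \to 0$, i.e.\ roughly $sp_n^2 \gg \log(s^2 p_n) \approx 2\log s$, which is a genuinely stronger condition than the hypothesis provides when $\log\E[m_T]$ is small relative to $\log s$. The fix is to not demand that \emph{every} good edge survives, but only that the surviving graph stays connected: use the fact that $G(s,p_n)$ is not just connected but has large minimum degree / good expansion (edge-expansion $\Omega(sp_n)$), and that removing a $(1-o(1))$ fraction — or even removing each bad edge — still leaves expansion, e.g.\ via a second-moment or a "every vertex keeps many good edges" argument: a fixed vertex $v$ has $\approx sp_n$ neighbors, and for each neighbor $u$ the pair $uv$ has $\approx sp_n^2 \to\infty$ expected common neighbors, so by a concentration argument $v$ retains $\approx sp_n \to\infty$ incident $G^*$-edges; combined with a sprinkling/expansion argument this yields connectivity of $G^*[a]$ with failure probability $e^{-\Omega(sp_n^2)}$ or better. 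Getting the quantitative version of this expansion-robustness clean enough to union-bound over all communities is the technical heart. I would also double-check the edge case where a community has size exactly $\Theta(s_n^{(\min)})$ with $s_n^{(\min)}$ growing slowly — this is the worst case for every estimate above and the one the hypothesis on $p_n$ is calibrated against — and confirm that singleton vertices and the Assumption~\ref{assumption:minimum_size} exclusion together mean there is genuinely nothing else to handle.
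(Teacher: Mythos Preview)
Your high-level skeleton is right (refinement from Theorem~\ref{thm:refinement-generalized} plus ``no non-singleton community gets split''), but you overcomplicate the second half and in doing so manufacture an obstacle that does not exist. The paper's proof is a single union bound: it shows that w.h.p.\ \emph{every} intra-community pair $(i,j)$ --- adjacent or not --- has $W_{ij}^+\ge 2$, i.e.\ at least two common neighbors inside their own community. From this one statement you get both of your steps at once: (a) every intra-community edge of $G_n$ survives in $G_n^*$, and (b) each community has diameter at most $2$ in $G_n$ (hence in $G_n^*$), so it is connected. No separate connectivity argument for $G(s,p_n)$, no expansion, no sprinkling.

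Quantitatively: for a pair in a community of size $s$, $W_{ij}^+\sim\mathrm{Bin}(s-2,p_n^2)$ and $\P(W_{ij}^+<2)\le(1+sp_n^2)(1-p_n^2)^{s-3}$, which is monotone decreasing in $s$. So one bounds by the value at $s=s_n^{(\min)}$ and union-bounds over all $m_T$ intra-community pairs, arriving at
\[
\E[m_T]\cdot(1+s_n^{(\min)}p_n^2)(1-p_n^2)^{s_n^{(\min)}-3}\;=\;\bigO\!\left(\E[m_T]\cdot s_n^{(\min)}p_n^2\,e^{-s_n^{(\min)}p_n^2}\right).
\]
Setting $s_n^{(\min)}p_n^2=\log\E[m_T]+\log\log\E[m_T]+\omega$ kills this: the $\log\E[m_T]$ cancels the $\E[m_T]$ factor, the $\log\log\E[m_T]$ cancels the linear prefactor $s_n^{(\min)}p_n^2\sim\log\E[m_T]$, and the $\omega(1)$ sends the remainder to zero. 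This is exactly the provenance of the odd-looking formula. Your worry that ``$\log\E[m_T]$ might be small relative to $\log s$'' is moot here: the union-bound count is the \emph{global} $\E[m_T]$, not a per-community $\binom{s}{2}$, and the exponent is evaluated at $s_n^{(\min)}$ by monotonicity. Assumption~\ref{assumption:concentration_mT} is not used for exact recovery, so you should drop that detour as well.
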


 To compare \cref{thm:exact} with existing results in the literature, we present some examples of its application.

\begin{example}
\label{example:exact_recovery_balanced}
    Consider $G_n\sim\PPM(T_n,p_n,q_n)$, where $T_n\sim\balanced(n,k,s_n)$, for $k$ fixed, $1\ll s_n\ll n^{2/3}$, $q_n=\bigO(n^{-1})$ and $p_n\ge\sqrt{3s_n^{-1}\log s_n}$. Then Algorithm~\ref{alg:commonNeighborsPartitioning} achieves exact recovery. 
\end{example}
 \citet{chen2016statistical} establishes exact recovery of a single community of size $s_n = \Omega(\log n)$ (that is, $T_n\sim\balanced(n,1,s_n)$), while the previous example allows for one or more communities having a much smaller size ($1\ll s_n\ll\log n$). 


\begin{example}
\label{example:exact_recovery_same_size}
    Consider $G_n\sim\PPM(T_n,p_n,q_n)$, where $T_n\sim\balanced(n,\lfloor n/s_n\rfloor,s_n)$, where $\log n+3\log\log n\le s_n\ll \sqrt{n}$, $q_n=\bigO(n^{-1})$ and $p_n\ge\sqrt{s_n^{-1}(\log n+3\log s_n)}$. Then Algorithm~\ref{alg:commonNeighborsPartitioning} achieves exact recovery. The condition $s_n\ge \log n+3\log\log n$ is required to ensure $\sqrt{s_n^{-1}(\log n+3\log s_n)}\le1$. 
\end{example}

Let us compare Example~\ref{example:exact_recovery_same_size} with results established in \citet{chen2016statistical}. First, consider the case $s_n = \alpha \log n$. According to \cite[Theorem~10]{chen2016statistical}, a simple degree thresholding approach succeeds at exact recovery if $p \ge c \alpha^{-1/2}$ for some unspecified constant $c>0$. However, this condition may never be satisfied if the unspecified $c$ is too large. In fact, by scrutinizing the proof of \cite[Theorem~10]{chen2016statistical}, we observe that $c \ge 144$ is needed. In contrast, our result provides an explicit lower-bound on $p_n$ to guarantee the exact recovery by Algorithm~\ref{alg:commonNeighborsPartitioning}. More generally, for $\log n \ll s \ll n^{1/2}$, \cite[Theorem~10]{chen2016statistical} requires $p_n \ge c \sqrt{ \frac{\log n}{s_n} }$ (again with $c \ge 144$). Hence, the condition $p_n \ge \sqrt{\frac{\log n + 3 \log s_n}{s_n} }$ in Example~\ref{example:exact_recovery_same_size} is strictly less restrictive. Finally, \cite[Theorem~6]{chen2016statistical} shows that a convex relaxation of MLE achieves exact recovery if $s_n p_n \ge c \log n$. Again, this requires $s_n\ge c\log n$, which is more restrictive than our requirement. 


\begin{example}
    Consider a planted partition $T_n$ consisting of an arbitrary number of communities whose sizes are in the range $[s_n^{(\min)},n^\alpha]$ for $s_n^{(\min)}\gg\log n$ and $\alpha<1/2$. We have $m_T\le \tfrac{1}{2}n^{1+\alpha}$. Thus, $\cD$ achieves exact recovery for $q_n=\bigO(n^{-1})$ and
    \[
    p_n\wge\sqrt{\frac{(1+\alpha)\log n+o(\log n)}{s_n^{(\min)}}}.
    \]
\end{example}

This last example highlights that the large communities only increase the threshold by a constant factor. The condition $s_n^{(\min)} p_n^2 \gesim \log n$ is analogous to the condition $s_n p_n^2 \gesim \log n$ obtained in Example~\ref{example:exact_recovery_same_size}. Notably, this result is new to the literature, as \citet{chen2016statistical} focuses exclusively on communities of equal size. Finally, this result is consistent with the fact that the exact recovery threshold in the PPM (with a finite number of communities) is primarily determined by the difficulty of recovering the smallest community, as this is the most challenging community to identify.

\begin{example}
    Consider an \ER random graph $G_n$ with connection probability $q_n=o(n^{-4/5})$, or equivalently, $G_n\sim\PPM(T_n,p_n,q_n)$, where $T_n$ consists of $n$ singleton communities and $p_n \in [0,1]$ is arbitrary. Then Algorithm~\ref{alg:commonNeighborsPartitioning} achieves exact recovery. That is, Algorithm~\ref{alg:commonNeighborsPartitioning} correctly detects the absence of communities in $G_n$.
\end{example}

This last example highlights that our algorithm does not lead to false positives in \ER random graphs, as long as the graph is not too dense.

\subsection{Almost Exact Recovery}
\label{subsection:almost_exact_recovery}
While almost exact recovery has been studied in the case of $k=\Theta(1)$, it has (to the  best of our knowledge) not been studied for $k$ growing arbitrarily fast and in the presence of arbitrarily small communities. Therefore, the results in this section are the first results on almost exact recovery of small communities.
Similar to exact recovery, we impose a constraint on the number of small communities. 

\begin{assumption}[Soft minimum community size]
\label{assumption:minimum_size_soft}
 There exists some sequence $s_n^{(\min)}\to\infty$ so that 
\[
\frac{\mathbb E[(S_n-1)\cdot\mathbb 1\{S_n<s_n^{(\min)}\}]}{\mathbb E[S_n-1]}\to 0.
\]
\end{assumption}
\cref{assumption:minimum_size_soft} is less restrictive than \cref{assumption:minimum_size}. Specifically, \cref{assumption:minimum_size_soft} ensures that communities of size smaller than $s_n^{(\min)}$ have negligible contribution to the expectation of $\E[S_n-1]$. Small communities are still admissible and the algorithm may not correctly recover them, but it turns out that they do not hinder almost exact recovery in terms of the correlation coefficient $\rho$. 
If we denote the size-biased version of $S_n-1$ by $(S_n-1)^*$, then \cref{assumption:minimum_size_soft} is equivalent to $(S_n-1)^*\stackrel\P\to\infty$. Since $(S_n-1)^*$ stochastically dominates $S_n-1$, a sufficient condition for \cref{assumption:minimum_size_soft} is $S_n\stackrel\P\to\infty$. 

\begin{theorem}
\label{thm:almost_exact}
Consider a graph $G_n\sim\text{PPM}(T_n,p_n,q_n)$, where the sequence of random partitions $T_n$ satisfy Assumptions~\ref{assumption:size-sparsity} and~\ref{assumption:concentration_mT}, and Assumption~\ref{assumption:minimum_size_soft} with $s_n^{(\min)}\to\infty$ so that
\[
p_n=\sqrt{\frac{2\log s_n^{(\min)} +\log\log s_n^{(\min)} +\omega(1)}{s_n^{(\min)}}}.
\]
 Then Algorithm~\ref{alg:commonNeighborsPartitioning} achieves almost exact recovery.
\end{theorem}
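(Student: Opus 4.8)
The plan is to reduce almost exact recovery to a first--moment bound on the intra-community pairs that the components of $G^*$ fail to merge, and then to show that the stated choice of $p_n$ makes this quantity negligible. Write $V_a$ for the vertex set of community $a$ (of size $s_a$), and $G[V_a]$, $G^*[V_a]$ for the subgraphs that $G$, $G^*$ induce on it. By \cref{thm:refinement-generalized}, $\P(C_n\preceq T_n)\to1$, and on the event $\{C_n\preceq T_n\}$ no edge of $G^*$ joins two distinct communities, so the components of $G^*$ inside $V_a$ are exactly the components of $G^*[V_a]$; hence on this event $m_{C_n}=\sum_a\sum_K\binom{|K|}{2}$ with $K$ ranging over the components of $G^*[V_a]$, and in particular
\[
m_{T_n}-m_{C_n}\ \le\ \sum_{a:\ G^*[V_a]\ \mathrm{disconnected}}\binom{s_a}{2}.
\]
By \cref{lem:correlation}, $\rho(C_n,T_n)=\sqrt{m_{C_n}/\E[m_{T_n}]}+o_\P(1)$, while \cref{assumption:concentration_mT} gives $m_{T_n}/\E[m_{T_n}]\wprto1$; since $m_{C_n}\le m_{T_n}$ on $\{C_n\preceq T_n\}$, it suffices to show $m_{T_n}-m_{C_n}=o_\P(\E[m_{T_n}])$, hence it is enough to bound the right-hand side above.

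Call a community $a$ \emph{bad} when $G^*[V_a]$ is disconnected, and treat bad communities of size $s_a\ge s_n^{(\min)}$ and of size $1<s_a<s_n^{(\min)}$ separately (singleton communities are never bad and contribute $0$). Within $V_a$ the edges of $G$ are present independently with probability $p_n$, and a community of size $s$ is not bad as soon as $G[V_a]$ is connected and every present edge of $G[V_a]$ has at least two common neighbours inside $V_a$, since such an edge then survives in $G^*$ no matter the edges outside $V_a$; hence
\[
\P\big(a\ \mathrm{bad}\mid s_a=s\big)\ \le\ s\,(1-p_n)^{s-1}\ +\ \binom{s}{2}\,p_n\,\P\big(\mathrm{Bin}(s-2,p_n^2)\le1\big).
\]
Since $x/\log x$ is increasing, $p_n^2 s=\tfrac{s}{s_n^{(\min)}}\big(2\log s_n^{(\min)}+\log\log s_n^{(\min)}+\omega(1)\big)\ge2\log s$ for all $s\ge s_n^{(\min)}$, so the first term decays super-polynomially; the second is $\bigO\!\big(s^2(1+p_n^2 s)e^{-p_n^2 s}\big)$. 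Plugging in $p_n^2 s_n^{(\min)}=2\log s_n^{(\min)}+\log\log s_n^{(\min)}+\omega(1)$ makes this bound equal to $\big(2+o(1)\big)e^{-\omega(1)}=o(1)$ at $s=s_n^{(\min)}$, and $s\mapsto s^2(1+p_n^2 s)e^{-p_n^2 s}$ is decreasing on $[s_n^{(\min)},\infty)$, so $\delta_n:=\sup_{s\ge s_n^{(\min)}}\P(a\ \mathrm{bad}\mid s_a=s)=o(1)$. Conditioning on $T_n$ and using that the event bounded above depends only on the edges inside $V_a$, $\E\big[\sum_{a\ \mathrm{bad},\,s_a\ge s_n^{(\min)}}\binom{s_a}{2}\big]\le\delta_n\,\E[m_{T_n}]=o(\E[m_{T_n}])$, and Markov's inequality turns this into an $o_\P$ bound.

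For bad communities of size $1<s_a<s_n^{(\min)}$, bound $\binom{s_a}{2}\le\tfrac12 s_n^{(\min)}s_a$ and sum to get a total at most $\tfrac12 s_n^{(\min)}\cdot\#\{i\in[n]:1<S_n(i)<s_n^{(\min)}\}$, of expectation $\tfrac12 s_n^{(\min)}\,n\,\P(1<S_n<s_n^{(\min)})$. \cref{assumption:minimum_size_soft}, together with $\E[m_{T_n}]=\tfrac n2\E[S_n-1]\ge\tfrac n2\,(s_n^{(\min)}-1)\,\P(S_n\ge s_n^{(\min)})$, shows this is $o(\E[m_{T_n}])$, and Markov again gives the $o_\P$ bound. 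Summing the two cases, $m_{T_n}-m_{C_n}=o_\P(\E[m_{T_n}])$, hence $m_{C_n}/\E[m_{T_n}]\wprto1$ and thus $\rho(C_n,T_n)\wprto1$, i.e.\ almost exact recovery.

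The main obstacle is the estimate of the third paragraph: a union bound over communities is useless here, since there may be $\Theta(n/s_n^{(\min)})$ communities near the critical size, each bad with probability only $o(1)$ rather than with summable probability; one must instead control the $\binom{s}{2}$-weighted expected number of bad communities and exploit monotonicity in $s$ to make the per-community estimate uniform. This is exactly why the numerator of $p_n$ is $2\log s_n^{(\min)}+\log\log s_n^{(\min)}+\omega(1)$: the leading $2\log s_n^{(\min)}$ forces the per-community failure probability to $o(1)$, which is all the first-moment reduction needs, whereas the exact-recovery \cref{thm:exact}, which genuinely needs summability over all communities, requires the larger $\log\E[m_{T_n}]$; the $\log\log$ and $\omega(1)$ corrections are what push the relevant first moments to $0$ rather than merely keep them bounded. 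A secondary, delicate point is confirming negligibility of the size-$(1,s_n^{(\min)})$ communities, which rests on \cref{assumption:minimum_size_soft}.
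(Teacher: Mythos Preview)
Your overall reduction (via \cref{thm:refinement-generalized} and \cref{lem:correlation} to $m_{T_n}-m_{C_n}=o_\P(\E[m_{T_n}])$, then a first-moment bound on $\sum_a\binom{s_a}{2}\1\{a\text{ bad}\}$) is sound and close in spirit to the paper's argument, and your treatment of the communities with $s_a\ge s_n^{(\min)}$ via a uniform $\delta_n=o(1)$ is fine. The small-community step, however, has a real gap: from $\E[m_{T_n}]\ge\tfrac n2(s_n^{(\min)}-1)\P(S_n\ge s_n^{(\min)})$ and \cref{assumption:minimum_size_soft} you only get
\[
\frac{\tfrac12\, s_n^{(\min)}\, n\,\P(1<S_n<s_n^{(\min)})}{\E[m_{T_n}]}\ \lesssim\ \frac{\P(1<S_n<s_n^{(\min)})}{\P(S_n\ge s_n^{(\min)})},
\]
and nothing in the hypotheses forces this ratio to vanish---\cref{assumption:minimum_size_soft} makes the numerator $o(1)$, but the denominator may vanish at the same rate. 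Concretely: put $n^{1/3}$ vertices into communities of size $s_n^{(\min)}/2$, another $n^{1/3}$ into communities of size $s_n^{(\min)}$ (take $s_n^{(\min)}=n^{1/4}$), and leave the rest as singletons. All hypotheses of the theorem hold, yet $\P(1<S_n<s_n^{(\min)})=\P(S_n\ge s_n^{(\min)})=n^{-2/3}$, and the sub-threshold communities carry a fixed fraction (about $\tfrac13$) of $m_{T_n}$; your ``discard everything below $s_n^{(\min)}$'' bound is then $\Theta(\E[m_{T_n}])$, not $o(\E[m_{T_n}])$. The paper takes a different route here, arguing at the level of the random vertex $I_n$ that $\P(|C_n(I_n)|\neq S_n)\to0$, so that the sub-threshold communities are absorbed into the event $\{1<S_n<s_n^{(\min)}\}$ whose \emph{probability} vanishes; your route instead needs the small communities' \emph{pair mass} to be negligible relative to $\E[m_{T_n}]$, which is a genuinely stronger requirement than \cref{assumption:minimum_size_soft}.

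A minor slip worth fixing: $s(1-p_n)^{s-1}$ is the expected number of isolated vertices in $\mathrm{ER}(s,p_n)$ and bounds $\P(\exists\text{ isolated vertex})$, not $\P(\text{disconnected})$. The standard cut-sum $\sum_{k\le s/2}\binom{s}{k}(1-p_n)^{k(s-k)}$ is the correct bound and is still super-polynomially small in this regime, so the conclusion of that paragraph survives. Alternatively, do as the paper does and require every \emph{pair} in $V_a$---not just every edge---to have two internal common neighbours; this forces diameter $\le2$ and removes the separate connectivity term entirely, at the cost only of the harmless factor $p_n$ in your second term.
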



 Recall that when Assumptions~\ref{assumption:size-sparsity} and~\ref{assumption:concentration_mT} hold, \cref{thm:refinement-generalized} and \cref{lem:correlation} ensure that $\rho(C_n,T_n)^2-\frac{m_{C_n}}{\E[m_{T_n}]}\stackrel{\P}{\rightarrow}0$. Thus, $\rho(C_n,T_n) \wprto 1$ holds whenever $\tfrac{\E[m_{C_n}]}{\E[m_{T_n}]} \to 1$. The proof of this result establishes \cref{thm:almost_exact}. 
 

Comparing~\cref{thm:almost_exact} to~\cref{thm:exact}, we see that the $\E[m_T]$ is replaced with $\left(s_n^{(\min)}\right)^2$. This reflects the fact that our proof is based on ensuring that for an arbitrary community of size at least $s_n^{(\min)}$, all vertex pairs inside that community have at least two common neighbors w.h.p., so that $\cD$ recovers it correctly.

 The following \cref{example:almost_exact_balancedPartition} shows that Algorithm~\ref{alg:commonNeighborsPartitioning} achieves almost exact recovery for balanced partitions, where each community has size $s_n \gg 1$, while \cref{example:exact_recovery_same_size} requires $s_n = \Omega( \log n )$ for exact recovery. 

\begin{example}
\label{example:almost_exact_balancedPartition}
  Let $T_n\sim\balanced(n,k_n,s_n)$ for $1\ll s_n\ll\sqrt{n}$ and $k_ns_n\le n$, and suppose $p_n\ge\sqrt{\tfrac{3\log s_n}{s_n}}$ and $q_n=o(n^{-4/5})$. Then Algorithm~\ref{alg:commonNeighborsPartitioning} achieves almost exact recovery.
\end{example}

Note that the $-1$ in the denominator of \cref{assumption:minimum_size_soft} is necessary to allow for cases where $\E[S_n]\to1$, like \cref{example:almost_exact_balancedPartition} with $k_ns_n^2\ll n$. For example, $k_n=1$ and $s_n\sim\log\log n$.

\begin{example}\label{ex:uniform}
 Suppose $T_n$ is drawn uniformly from the set of all partitions of $[n]$. We recall that in that case, we have $S_n/\log n\stackrel{\P}{\rightarrow}1$ and $m_{T_n} / \E[m_{T_n}] \wprto 1$ \citep{gosgens2024erd}. Hence, Algorithm~\ref{alg:commonNeighborsPartitioning} achieves almost exact recovery for $p_n=\omega\left(\sqrt{\tfrac{\log\log n}{\log n}}\right)$ and $q_n=o(n^{-4/5})$.
\end{example}

\subsection{Weak Recovery}
\label{subsection:weak_recovery}
In this section, we will present two different recovery results for weak recovery. One tailored to partitions where a relatively large fraction of vertices is part of large communities, while the other is tailored to the case where almost all vertices are part of communities of bounded size.
Note that weak recovery is trivially implied by almost exact recovery. Therefore, we will mostly focus on settings where \cref{assumption:minimum_size_soft} does \emph{not} hold, since otherwise we already achieve almost exact recovery. Whenever \cref{assumption:minimum_size_soft} does not hold, then for every $s_n\to\infty$, we have
\[
\varepsilon_n(s_n)=\frac{\mathbb E[(S_n-1)\cdot\mathbb 1\{S_n<s_n\}]}{\mathbb E[S_n-1]}\not\to 0.
\]
However, $\varepsilon_n(s_n)\in[0,1]$, so that by the Bolzano-Weierstrass theorem, it must converge along some subsequence. That is, there exists some $\varepsilon\in[0,1]$ and some subsequence $n_\ell$ so that
$
\varepsilon_{n_\ell}(s_{n_\ell})\to\varepsilon$,
as $\ell\to\infty$.
It is easy to construct pathological partitions $T_n$ and sequences $s_n\to\infty$ for which $\varepsilon_n(s_n)$ oscillates, but these are not of theoretic interest. 
Therefore, we will focus on cases where there exists some sequence $s_n\to\infty$ for which $\varepsilon_n(s_n)$ converges, so that it converges along every subsequence.

\begin{assumption}[Weak minimum community size]
    \label{assumption:minimum_size_weak}
    There exists some sequence $s_n^{(\min)}\to\infty$ and $\varepsilon\in(0,1)$ so that
    \[
\frac{\mathbb E[(S_n-1)\cdot\mathbb 1\{S_n<s_n^{(\min)}\}]}{\mathbb E[S_n-1]}\to\varepsilon.
\]
\end{assumption}

We present the following recovery result:

\begin{theorem}\label{thm:weak-large}
    Consider a graph $G_n\sim\text{PPM}(T_n,p_n,q_n)$, where the sequence of random partitions $T_n$ satisfy Assumptions~\ref{assumption:size-sparsity} and~\ref{assumption:concentration_mT}, and Assumption~\ref{assumption:minimum_size_weak} with $s_n^{(\min)}\to\infty$ and $\varepsilon>0$ so that 
\[
p_n=\sqrt{\frac{2\log s_n^{(\min)} +\log\log s_n^{(\min)} +\omega(1)}{s_n^{(\min)}}}.
\]
Then Algorithm~\ref{alg:commonNeighborsPartitioning} achieves weak recovery. More precisely,
\[
\rho(\cD(G_n),T_n)\ge\sqrt{1-\varepsilon}+o_\P(1).
\]
\end{theorem}
\begin{proof}
    We make use of \cref{lem:correlation} and will prove that
    \[
        \frac{m_{C_n}}{\E[m_{T_n}]}\ge 1-\varepsilon+o_\P(1).
    \]
    We separate $m_{T_n}=m_{T_n}^-+m_{T_n}^+$ into vertex pairs that are part of communities of size smaller than $s_n^{(\min)}$ and communities of size at least $s_n^{(\min)}$. By \cref{assumption:minimum_size_weak}, we have $\E[m_{T_n}^-]\sim\varepsilon \E[m_T]$ and $\E[m_{T_n}^+]\sim(1-\varepsilon) \E[m_T]$. We similarly separate $m_{C_n}=m_{C_n}^-+m_{C_n}^+$. Following the same steps as in the proof of \cref{thm:almost_exact}, we conclude that $m_{C_n}^+/\E[m_{T_n}^+]=1+o_\P(1)$.
    We will bound $m_{C_n}^-\ge0$ and write
    \begin{align*}
        \frac{m_{C_n}^-+m_{C_n}^+}{\E[m_{T_n}]}\ge (1-\varepsilon)\frac{m_{C_n}^+}{(1-\varepsilon)\E[m_{T_n}]}=1-\varepsilon+o_\P(1),
    \end{align*}
    which completes the proof.
\end{proof}
Comparing \cref{thm:weak-large} to \cref{thm:almost_exact}, we see that we merely replace \cref{assumption:minimum_size_soft} with~\cref{assumption:minimum_size_weak}. This allows us to ensure that sufficiently many communities are completely recovered, so that a nonvanishing fraction of the vertex pairs is correctly recovered. 

Next, we develop a weak recovery result that allows for communities of bounded size.
If there is some $s_n\to\infty$ for which $\varepsilon_n(s_n)$ converges, but \cref{assumption:minimum_size_soft} and \cref{assumption:minimum_size_weak} do not hold, then it must hold that for every $s_n\to\infty$, we have $\varepsilon_n(s_n)\to1$. The following lemma shows that this implies that $S_n$ has a convergent subsequence:

\begin{lemma}\label{lem:uniform-integrability}
    If for every $s_n\to\infty$, it holds that
    \[
\frac{\mathbb E[(S_n-1)\cdot\mathbb 1\{S_n<s_n\}]}{\mathbb E[S_n-1]}\to1,
\]
then the sequence of conditional distributions $(S_n\ |\ S_n>1)$ is uniformly integrable and there exists some limiting random variable $S$ and a subsequence $n_\ell$ so that $(S_{n_\ell}\ |\ S_{n_\ell}>1)\stackrel d\to S$ as $\ell\to\infty$. Moreover, $\E[S_{n_\ell}\ |\ S_{n_\ell}>1]\to\E[S]$.
\end{lemma}

As we are mainly interested in the convergent subsequence, we will impose the following assumption. 
\begin{assumption}[Convergent size distribution]\label{assumption:convergent-s}
 There exists a distribution $S$ with $\E[S]<\infty$ and $\P(S\ge4)>0$, and such that 
 $(S_n\ |\ S_n>1)\stackrel d\to S$ and $\cE{S_n}{S_n>1}\to\E[S].$
\end{assumption}

\begin{theorem}\label{thm:weak}
Consider a graph $G_n\sim\text{PPM}(T_n, p, q_n)$, where $p>0$, $T_n$ satisfies Assumptions~\ref{assumption:size-sparsity},~\ref{assumption:concentration_mT} and~\ref{assumption:convergent-s}. Then Algorithm~\ref{alg:commonNeighborsPartitioning} achieves weak recovery. 
\end{theorem}
We first discuss the conditions of \cref{thm:weak}. Note that the condition $\P(S\ge4)>0$ is necessary because Algorithm~\ref{alg:commonNeighborsPartitioning} is based on finding subgraphs of size~$4$.
Regarding $p_n=p>0$, note that for a partition with balanced communities of size $s<\infty$, weak recovery is not feasible if $p_n\cdot s\to0$, as this means that a typical vertex will not have any connections to its community. Since $\E[S_n]$ is bounded, $p_n$ must be nonvanishing, which is why we consider constant $p_n=p$ in~\cref{thm:weak}.
Moreover, \cref{assumption:size-sparsity} is not implied by \cref{assumption:convergent-s}:
suppose that $q_n=\bigO(n^{-1})$ and consider a single community of size $n^{3/4}$ while the remainder of communities are of size $2$. Then $S_n\stackrel d\to 2$. Since $p > 0$ so that \cref{assumption:size-sparsity} simplifies to $\E[S_n^2]=o(n)$, but we have $\E[S_n^2]\sim n^{5/4}$. 

The proof of \cref{thm:weak} provides a lower bound for $\rho(C_n,T_n)$, where $C_n$ is the partition obtained by Algorithm~\ref{alg:commonNeighborsPartitioning}. More precisely, we establish that 
    \begin{align}
    \label{eq:lower_bound_weak_recovery}
        \rho(C_n,T_n) \wge \sqrt{\frac{\E_{H\sim \text{ER}(S,p)}[|C^{(H)}(1)|-1]}{\E[S-1]}}+o_\P(1),
    \end{align}
    where $C^{(H)}=\cD(H)$ (that is, we apply Algorithm~\ref{alg:commonNeighborsPartitioning} to an \Erdos-\Renyi random graph with $S$ vertices and connection probability $p$ to obtain $C^{(H)}$, and $|C^{(H)}(1)|$ is the number of vertices in the detected community of vertex $1$). 
    The quantity in the right hand side of~\eqref{eq:lower_bound_weak_recovery} is positive. To see this, note that if $S=4$, then $|C^{(H)}(1)|=4$ if this community forms a clique, which occurs with probability $p^{4\choose 2}=p^6$. For $S>4$, $\P(|C^{(H)}(1)|\ge4)\ge p^6$, as we can bound this probability by the probability that $1$ forms a clique with vertices $2,3$ and $4$. For $S<4$, we use the bound $|C^{(H)}(1)|\ge1$.
    We conclude that the given lower bound is asymptotically at least
    \[
    \frac{3p^6\P(S\ge4)}{\E[S]-1} \ > \ 0.
    \]
    The following example demonstrates that the more precise performance bound from~\eqref{eq:lower_bound_weak_recovery} is quite sharp when compared to the actual performance of Algorithm~\ref{alg:commonNeighborsPartitioning} on PPM graphs:

\begin{example}
    Suppose that $T_n\sim\balanced(n,k_n,s)$ for $s\ge4$, $k_ns\le n$ and $k_n\to\infty$. Then Algorithm~\ref{alg:commonNeighborsPartitioning} achieves weak recovery. Moreover, we have $\rho(C_n,T_n) \ge \Delta +o_\P(1) $, where
    \begin{equation}\label{eq:weak-balanced}
    \Delta \weq \sqrt{\frac{ s' - 1 }{ s - 1 }}
    \quad \text{ for } \quad 
    s' \weq \E_{H\sim ER(s,p)}[|C^{(H)}(1)|]
    \end{equation}
    The quantity $\Delta$ provides a lower bound for the asymptotic performance of Algorithm~\ref{alg:commonNeighborsPartitioning}. However, obtaining closed-form expressions for $s'$ is challenging, even in the special case of equal-size communities. Instead, we can efficiently estimate this expectation by (i) sampling several \Erdos-\Renyi graphs with $s$ vertices and edge connection probability $p$, (ii) applying Algorithm~\ref{alg:commonNeighborsPartitioning} to these graphs, and (iii) computing the empirical average. In \cref{fig:scores-p}, we use this approach to estimate $\Delta$ for various values of $p$ and $s$. We observe that $\Delta$ rapidly approaches $1$ as $s$ increases. In \cref{fig:scores}, we show that the empirical performance of Algorithm~\ref{alg:commonNeighborsPartitioning} closely aligns with the asymptotic bound given by the estimated value of~$\Delta$.     
\end{example}

\begin{figure}[!ht]
\centering
\begin{subfigure}[b]{0.48\linewidth}
    \includegraphics[width=\linewidth]{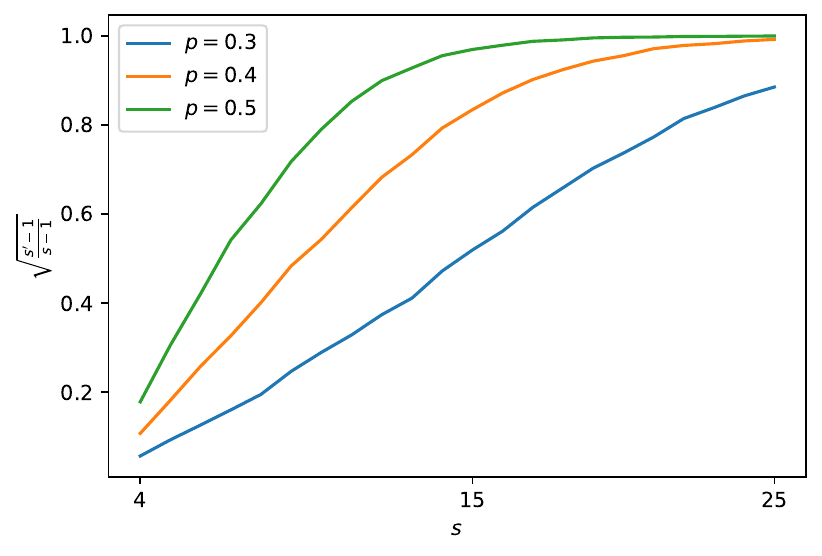}
    \caption{$\sqrt{\tfrac{s'-1}{s-1}}$}
 \label{fig:scores-p}
 \end{subfigure} \hfill
\begin{subfigure}[b]{0.48\linewidth}
    \includegraphics[width=\linewidth]{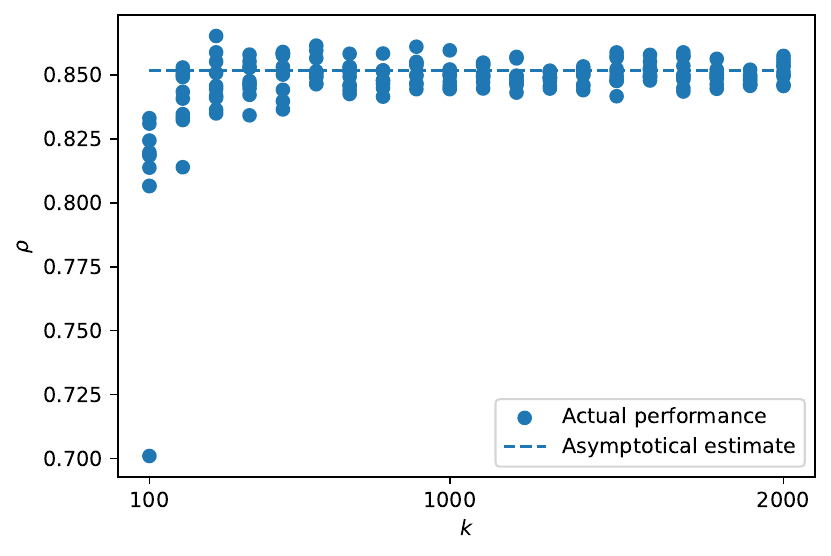}
    \caption{$\rho(C_n, T_n)$}
    \label{fig:scores}
\end{subfigure} 
    \caption{Figure~\ref{fig:scores-p}: Estimation of the quantity $\Delta$ defined in~\eqref{eq:weak-balanced}. For each estimate, we sample $5000$ random graphs from $ER(s,p)$ and apply Algorithm~\ref{alg:commonNeighborsPartitioning} to each of them. \\ 
    Figure~\ref{fig:scores}: Comparison of the performance of Algorithm~\ref{alg:commonNeighborsPartitioning} to the estimated asymptotic performance established in~\eqref{eq:weak-balanced}, when $T_{n}\sim\balanced(k\cdot s,k,s)$ with $p=0.5$, $s=11$, and $q=5/(k\cdot s-s)$ (so that in expectation, every vertex has five neighbors inside and outside its community).}
\end{figure}

\section{Recovery of Power-law Partitions}
\label{section:power_law_partitions}

In this section, we focus on power-law partitions. We recall some results on power-law random variables in \cref{subsec:power-law-rv}, show how to construct power-law partitions in \cref{subsec:construction_power_law_partition}, and state the results for recovering power-law partitions in \cref{subsec:recovery_power_law}.

\subsection{Power-law Random Variables}
\label{subsec:power-law-rv}
It has been observed in many real-world networks that the community sizes follow a power law~\citep{lancichinetti2008benchmark,stegehuis2016power,voitalov2019scale}. Informally, this means that the probability of observing a community of size larger than $s$ scales like $s^{1-\tau}$ for some $\tau>1$. In our setting, we formalize this using the following definition:

\begin{definition}
    A random partition $T_n$ asymptotically follows a power law with exponent $\tau>1$ if there is some scaling sequence $s_n$ so that
    \[
\frac{S_n}{s_n} \ \stackrel d{\to} \ S,
\]
for some random variable $S$ that satisfies 
\[
\P(S\ge x) \weq \Theta(x^{1-\tau}).
\]
\end{definition}
The random variable $S$ in the above definition is said to follow a power-law distribution with exponent~$\tau$. Note that other works, such as~\citet{voitalov2019scale}, use a more general definition in which the $\Theta(x^{1-\tau})$ is replaced by regularly-varying functions. For simplicity, we adopt the narrower definition given above.

If $S$ follows a power-law distribution with exponent $\tau>2$, then $\E[S^k]<\infty$ if $k<\tau-1$ and $\E[S^k]=\infty$ if $k>\tau-1$. The simplest example of a probability distribution that satisfies a power-law is the \emph{Pareto distribution} $Z\sim\text{Pareto}(c,\beta)$, given by 
\[
 \P(Z>z) \weq (c/z)^{\beta},
\]
where $c>0$ is the scale parameter and $\beta>0$ is the tail exponent. The Pareto distribution follows a power law with exponent $\tau=\beta+1$.


\subsection{Construction of Power-law Partitions}
\label{subsec:construction_power_law_partition}

Recall from \cref{subsection:arbitrary_partitions} that, if $T_n\sim\text{Powerlaw}(\tau,k,n)$ for $\tau>2$ and $k\in[n]$, then each vertex is assigned to the community $a\in[k]$ with probability
\begin{align}
\label{eq:def_Pi_a}
 \Pi_a \weq \frac{e^{X_a/\tau}}{\sum_{b\in[k]}e^{X_b/\tau}},
\end{align}
where $(X_a)_{a\in[k]}$ is a sequence of i.i.d. exponentially distributed random variables with parameter $1$.
We refer to $\Pi_a$ as the \emph{proportion} of community $a$ and denote its distribution by $\Pi_a\sim\text{Prop}(\tau,k)$.
For $T_n\sim\text{Powerlaw}(\tau,k,n)$, let $\Pi^*_n$ denote the proportion of the community of a vertex chosen uniformly at random. The distribution of $\Pi^*_n$ corresponds to the \emph{size-biased} distribution of $\Pi_a$~\citep{arratia2019size}. That is, given proportions $\Pi_a=\pi_a$ for $a\in[k]$, we have 
\[
\cP{\Pi^*_n=x}{\forall a\in[k] \colon \Pi_a=\pi_a} \weq x\cdot \left| \{a\in[k] \colon \pi_a=x\} \right|.
\]
Because this distribution does not directly\footnote{It depends on $k$, which may depend on $n$.} depend on $n$, we will abbreviate $\Pi^*=\Pi^*_n$.
The following theorem states that this construction of partitions leads to power-law distributed community sizes. 

\begin{theorem}\label{thm:power-law-rescaled}
    Let $\tau>2$ and $1\ll k_n\ll n$. If $T_n\sim\text{Powerlaw}(\tau,k_n,n)$, then 
    \[
        \frac{S_n}{n/k_n} \ \stackrel d\to \ \text{Pareto}\left(1-\tfrac1\tau, \tau-1\right),
    \]
    In particular, $S_n$ asymptotically follows a power law with exponent $\tau$ and scaling $s_n=n/k_n$.
\end{theorem}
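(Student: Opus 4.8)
The plan is to transfer the community size $S_n$ first to a size-biased binomial variable, and then to an i.i.d.\ Pareto model, in which the strong law of large numbers (SLLN) and Slutsky's theorem finish the argument. First I would rewrite the proportions through i.i.d.\ Pareto variables: with $Y_a := e^{X_a/\tau}$ one has $\P(Y_a > t) = t^{-\tau}$ for $t \ge 1$, so $Y_1,\dots,Y_{k_n}$ are i.i.d.\ $\text{Pareto}(1,\tau)$ with $\E[Y_a] = \tfrac{\tau}{\tau-1}$ and, crucially using $\tau > 2$, $\E[Y_a^2] = 1 + \tfrac{2}{\tau-2} < \infty$. Writing $\Sigma_n = \sum_{b=1}^{k_n} Y_b$, we have $\Pi_a = Y_a/\Sigma_n$, so the size-biased proportion $\Pi^*$ (the proportion of the community of a uniformly random vertex) satisfies $\Pi^* \stackrel{\mathcal D}{=} Y_A/\Sigma_n$, where $A$ is drawn with $\P(A = a \mid Y) = Y_a/\Sigma_n$ (size-biased by $Y$, since $\Pi_a \propto Y_a$).

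Next I would reduce $S_n$ itself to a size-biased binomial. Conditioning on $\Pi$, the community counts $(N_a)$ are multinomial$(n,\Pi)$ with $N_a \sim \Binom(n,\Pi_a)$ marginally, and a uniformly random vertex lands in community $a$ with probability $N_a/n$; hence $\P(S_n > s) = \E\big[\tfrac1n \sum_a N_a \mathbbm{1}\{N_a > s\}\big]$. Applying the size-biasing identity $\E[N f(N)] = np\, \E[f(N'+1)]$ for $N \sim \Binom(n,p)$ and $N' \sim \Binom(n-1,p)$, and using $\sum_a \Pi_a = 1$ so that the weights $\Pi_a$ define exactly the index $A$ above, yields $S_n \stackrel{\mathcal D}{=} B_n + 1$ where $B_n \mid \Pi^* \sim \Binom(n-1,\Pi^*)$.

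It then remains to identify the two limits and combine them. Since $k_n \to \infty$, the SLLN gives $\Sigma_n/k_n \to \tfrac{\tau}{\tau-1}$, hence $k_n/\Sigma_n \to \tfrac{\tau-1}{\tau}$; and $\P(Y_A \le t) = \E\big[\Sigma_n^{-1}\sum_a Y_a \mathbbm{1}\{Y_a \le t\}\big] \to \E[Y\mathbbm{1}\{Y \le t\}]/\E[Y]$ by the SLLN together with bounded convergence (the integrand lies in $[0,1]$), so $Y_A \stackrel{\mathcal D}{\to} \tilde Y$, the size-biased $\text{Pareto}(1,\tau)$, which has density $(\tau-1)y^{-\tau}$ on $[1,\infty)$, i.e.\ $\tilde Y \sim \text{Pareto}(1,\tau-1)$. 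By Slutsky, $k_n \Pi^* = Y_A\cdot(k_n/\Sigma_n) \stackrel{\mathcal D}{\to} \tfrac{\tau-1}{\tau}\tilde Y \sim \text{Pareto}(1 - \tfrac1\tau, \tau-1)$. Moreover the binomial fluctuations disappear after the $k_n/n$ rescaling: $\E\big[\big(\tfrac{k_n}{n}(B_n - (n-1)\Pi^*)\big)^2\big] \le \tfrac{k_n^2}{n^2}\E[n\Pi^*] = \tfrac{k_n}{n}\E[k_n\Pi^*]$, and since $\Sigma_n \ge k_n$ one has $\E[k_n\Pi^*] = \E\big[k_n \sum_a Y_a^2/\Sigma_n^2\big] \le \E[Y^2] < \infty$, so the left side is $\bigO(k_n/n) = o(1)$. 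Hence $\tfrac{k_n(B_n+1)}{n} = \tfrac{n-1}{n}k_n\Pi^* + \tfrac{k_n}{n}(B_n - (n-1)\Pi^*) + \tfrac{k_n}{n} \stackrel{\mathcal D}{\to} \tfrac{\tau-1}{\tau}\tilde Y$ by Slutsky, and since $\tfrac{S_n}{n/k_n} = \tfrac{k_n S_n}{n} \stackrel{\mathcal D}{=} \tfrac{k_n(B_n+1)}{n}$, this is the asserted limit; the power-law exponent equals $\tau$ because $\text{Pareto}(\cdot,\tau-1)$ has tail $\Theta(x^{1-\tau})$.

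The main obstacle is the uniform bound $\sup_n \E[k_n\Pi^*] < \infty$ used to kill the rescaled binomial fluctuations: this is precisely where $\tau > 2$ is needed, since it is what makes $\E[Y^2]$ finite, and for $\tau \le 2$ the contribution of the few largest communities would change the scaling. The size-biasing bookkeeping of the second step — matching $S_n$ with $B_n+1$ and $\Pi^*$ with $Y_A/\Sigma_n$, and checking that the reweighting is exactly by $\Pi_a$ — also requires care, while the rest is routine SLLN/Slutsky manipulation.
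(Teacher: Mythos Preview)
Your proof is correct and shares the same high-level architecture as the paper: both identify $S_n-1\mid\Pi^*\sim\Binom(n-1,\Pi^*)$, bound the rescaled binomial fluctuation by $\tfrac{k_n}{n}\E[k_n\Pi^*]=o(1)$, and reduce the problem to showing $k_n\Pi^*\stackrel{\mathcal D}{\to}\text{Pareto}(1-\tfrac1\tau,\tau-1)$.

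The genuine difference lies in how that last convergence is obtained. The paper (in its Lemmas on $\Pi_a$ and $\Pi^*$) computes the tail of $\Pi_a$ exactly, differentiates to get the density of $\Pi^*$, and then invokes Scheff\'e's theorem on the pointwise limit of the rescaled density. You instead represent $\Pi^*=Y_A/\Sigma_n$ with $Y_a=e^{X_a/\tau}$ i.i.d.\ $\text{Pareto}(1,\tau)$ and $A$ drawn size-biased by $Y$, then combine two SLLN statements ($\Sigma_n/k_n\to\E[Y]$ and $k_n^{-1}\sum_aY_a\mathbbm{1}\{Y_a\le t\}\to\E[Y\mathbbm{1}\{Y\le t\}]$) with bounded convergence and Slutsky. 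Your route is more probabilistic and avoids the density bookkeeping; it also makes the role of $\tau>2$ fully explicit, since the uniform bound $\E[k_n\Pi^*]\le\E[Y^2]$ (via $\Sigma_n\ge k_n$) is exactly where finiteness of the second moment of $Y$ enters. On the other hand, the paper's explicit density computation yields, as a by-product, the moment asymptotics $\E[(\Pi^*)^r]\sim c_{r,\tau}k^{-r}$ for $r<\tau-1$, which are needed elsewhere in the paper (e.g., to verify $\E[S_n^2]=o(n)$ and the concentration of $m_{T_n}$); your argument does not deliver these directly.
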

When $k_n=\Theta(n)$, we can determine the limiting distribution of $S_n$ exactly:

\begin{lemma}\label{lem:powerlaw-mixed-Poisson}
    Let $\tau>2$ and $k_n\sim n/s$, for $s>1$. If $T_n\sim\text{Powerlaw}(\tau,k_n,n)$, then \[
        \P(S_n=r+1)\to \E\left[\frac{Z^r}{r!}e^{-Z}\right],
    \]
    where $Z\sim\text{Pareto}(s\cdot(1-\tfrac1\tau),\tau-1)$. That is, $S_n-1$ converges in distribution to a \emph{mixed Poisson distribution} with Pareto mixture, so that $S_n$ asymptotically follows a power law with exponent $\tau$.
\end{lemma}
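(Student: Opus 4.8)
The plan is to reduce $S_n$ to a conditional binomial, identify the limiting law of the rescaled \emph{size-biased} community proportion, and then pass to a mixed Poisson limit. Write $Y_a=e^{X_a/\tau}$, so that $Y_1,\dots,Y_{k_n}$ are i.i.d.\ with $\P(Y_a>y)=y^{-\tau}$ for $y\ge1$, i.e.\ $Y_a\sim\text{Pareto}(1,\tau)$, and set $\Sigma_n=\sum_{b\in[k_n]}Y_b$, so $\Pi_a=Y_a/\Sigma_n$. By exchangeability of the vertices and independence of the assignments, conditionally on $(\Pi_a)$ the community of a uniformly random vertex is community $a$ with probability $\Pi_a$, and given that it is community $a$ the remaining $n-1$ vertices join it independently with probability $\Pi_a$; hence, writing $h_{n,r}(\lambda)=\binom{n-1}{r}(\lambda/n)^r(1-\lambda/n)^{n-1-r}$,
\[
\P(S_n=r+1)=\E\Big[\sum_{a\in[k_n]}\Pi_a\binom{n-1}{r}\Pi_a^{\,r}(1-\Pi_a)^{n-1-r}\Big]=k_n\,\E\Big[\frac{Y_1}{\Sigma_n}\,h_{n,r}\!\Big(\tfrac{nY_1}{\Sigma_n}\Big)\Big],
\]
where the last equality uses that the $\Pi_a$ are exchangeable and $n\Pi_1=nY_1/\Sigma_n$.

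Next I would pass to the limit inside this expectation. Since $\E[Y_1]=\E[e^{X_1/\tau}]=\tfrac{\tau}{\tau-1}=:\mu<\infty$, the strong law gives $\Sigma_n/k_n\to\mu$ almost surely, and combined with $k_n\sim n/s$ this yields $n/\Sigma_n\to s/\mu=s(1-1/\tau)=:c$ and $k_nY_1/\Sigma_n\to Y_1/\mu$ almost surely. A direct estimate shows $h_{n,r}\to g_r$ uniformly on compact sets, where $g_r(\lambda)=\tfrac{\lambda^r}{r!}e^{-\lambda}$; since $g_r$ is continuous this gives $h_{n,r}(\lambda_n)\to g_r(\lambda)$ whenever $\lambda_n\to\lambda$, so the integrand converges a.s.\ to $\tfrac{Y_1}{\mu}g_r(cY_1)$. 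For domination, note $\Sigma_n\ge k_n$ (each $Y_b\ge1$) and $0\le h_{n,r}\le1$, so the integrand is bounded by $Y_1\in L^1$; dominated convergence then yields $\P(S_n=r+1)\to\tfrac1\mu\E[Y_1 g_r(cY_1)]$.

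Finally I would identify this limit with $\E[g_r(Z)]$ for $Z\sim\text{Pareto}(c,\tau-1)$. The functional $f\mapsto\tfrac1\mu\E[Y_1 f(Y_1)]$ is expectation under the size-biased law of $Y_1$, and a one-line density computation shows the size-biased version of $\text{Pareto}(1,\tau)$ is $\text{Pareto}(1,\tau-1)$; scaling by $c$ then gives $\tfrac1\mu\E[Y_1 g_r(cY_1)]=\E[g_r(Z)]$ with $Z\sim\text{Pareto}(c,\tau-1)$ and $c=s(1-1/\tau)$, which is exactly $\E[\tfrac{Z^r}{r!}e^{-Z}]$. The power-law claim then follows because a mixed Poisson law with a regularly varying (here $\text{Pareto}$, exponent $\tau-1$) mixing variable inherits the tail exponent of the mixing variable, so $S_n$ asymptotically follows a power law with exponent $\tau$. (This complements Theorem~\ref{thm:power-law-rescaled}: there $k_n=o(n)$ forces $n\Pi^*_n\to\infty$ and the binomial concentrates, whereas here $k_n=\Theta(n)$ keeps $n\Pi^*_n=\Theta(1)$, producing a genuine mixing.)

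I expect the main obstacle to be the second paragraph: handling the triangular-array dependence cleanly — the normalizing sum $\Sigma_n$ both changes with $n$ through $k_n$ and contains the tagged variable $Y_1$ — and producing an integrable dominating function uniform in $n$. Once the almost sure convergence $\Sigma_n/k_n\to\mu$ and the crude bound $\Sigma_n\ge k_n$ are in hand, the rest is routine; equivalently, the crux is the weak limit $n\Pi^*_n\stackrel{D}{\to}\text{Pareto}(s(1-1/\tau),\tau-1)$ for the size-biased proportion $\Pi^*_n$, after which the conditional-binomial-to-Poisson passage is standard.
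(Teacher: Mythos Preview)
Your argument is correct, and it reaches the same limit as the paper while taking a somewhat different route. The paper conditions on $\Pi^*$ and invokes an explicit Poisson approximation bound (Roos) to control $\bigl|\cP{S_n-1=r}{\Pi^*}-\tfrac{((n-1)\Pi^*)^r}{r!}e^{-(n-1)\Pi^*}\bigr|$ by $(n-1)(\Pi^*)^2$, then takes expectations and uses the moment estimate $\E[(\Pi^*)^2]=\bigO(k_n^{1-\tau})$ together with the already-proved distributional limit $k_n\Pi^*\stackrel{D}{\to}\text{Pareto}(1-\tfrac1\tau,\tau-1)$ from Lemma~\ref{lem:pi-size-bias}. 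You instead unfold $\Pi^*$ into the underlying Pareto variables $Y_a=e^{X_a/\tau}$, couple across $n$ via a single i.i.d.\ sequence so that the strong law gives $\Sigma_n/k_n\to\mu$ a.s., and then apply dominated convergence with the clean dominating bound $k_nY_1/\Sigma_n\le Y_1$ (using $\Sigma_n\ge k_n$). This is more self-contained---you avoid citing the Roos bound and re-derive the size-biased Pareto limit in one line---whereas the paper's version is more modular, reusing the machinery built in Lemma~\ref{lem:pi-size-bias}. Both arguments ultimately hinge on the same fact you single out at the end: $n\Pi^*_n\stackrel{D}{\to}\text{Pareto}(s(1-\tfrac1\tau),\tau-1)$.
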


\subsection{Recovery of Power-law Partitions}
\label{subsec:recovery_power_law}

In this section, we apply the results of \cref{section:arbitrary_partitions} about the recovery of planted partitions to show that Algorithm~\ref{alg:commonNeighborsPartitioning} recovers power-law partitions.

\begin{corollary}[Recovery of power-law partitions]
\label{cor:recovery_power_law}
Let $G_n\sim\text{PPM}(T_n,p_n,q_n)$ where $T_n$ is sampled from $\text{Powerlaw}(\tau,k_n,n)$ with $\tau>2$ and $q_n = \bigO(n^{-1})$. 
\begin{enumerate}
  \item Suppose that   
  \[
  \max\{\sqrt{n},n^{\frac{1}{\tau-1}}\}\ll k_n \le \frac{\varepsilon^2}{4} \frac{\tau-1}{\tau} \frac{n}{\log n} 
  \quad \text{ and } \quad  
  p_n^2 \wge \frac{3\tau}{\tau-1-\varepsilon}\frac{k_n \log n}{n},
 \]
 for some $\varepsilon>0$. Then Algorithm~\ref{alg:commonNeighborsPartitioning} achieves exact recovery. 
 \item Suppose that  
 \[
  \max\{\sqrt{n},n^{\frac{1}{\tau-1}}\}\ll k_n \ll n 
  \quad \text{ and } \quad 
  p_n^2 \wge \frac{3 \tau}{ \tau - 1} \frac{ k_n \log(n/k_n) }{ n }. 
 \]
 Then Algorithm~\ref{alg:commonNeighborsPartitioning} achieves almost exact recovery. 
 \item Suppose that $p> 0$ and $k_n\sim n/s$ for $s>1$. Then Algorithm~\ref{alg:commonNeighborsPartitioning} achieves weak recovery. 
\end{enumerate} 
\end{corollary}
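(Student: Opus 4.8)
\textbf{Proof proposal for Corollary~\ref{cor:recovery_power_law}.}

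The plan is to verify, in each of the three cases, that the hypotheses of the corresponding general recovery theorem (Theorem~\ref{thm:exact}, Theorem~\ref{thm:almost_exact}, Theorem~\ref{thm:weak}) are satisfied by power-law partitions $T_n\sim\text{Powerlaw}(\tau,k_n,n)$. By Theorem~\ref{thm:power-law-rescaled}, the typical community size $S_n$ is of order $n/k_n$, and more precisely $S_n/(n/k_n)\stackrel{D}{\to}\text{Pareto}(1-\tfrac1\tau,\tau-1)$. First I would collect the moment estimates we need: since the limiting Pareto law has exponent $\tau$, we have $\E[S_n]=\Theta(n/k_n)$ and $\E[S_n^2]=\Theta((n/k_n)^{3-\tau})$ when $\tau<3$ and $\E[S_n^2]=\Theta((n/k_n)^2\cdot)$ up to logarithmic corrections near $\tau=3$ (one must be slightly careful here since the second moment of a Pareto-$\tau$ is infinite for $\tau\le 3$, so the estimate comes from the truncation at the maximal community size $O(n)$, i.e. $\E[S_n^2]=O((n/k_n)^2(k_n)^{3-\tau})=O(n^2k_n^{1-\tau})$ for $2<\tau<3$, and $\E[S_n^2]=O(n^2/k_n^2\cdot\log)$ for $\tau\ge 3$). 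Also $m_{T_n}=\tfrac12\sum_a|T_n(a)|(|T_n(a)|-1)$ has $\E[m_{T_n}]=\Theta(n\E[S_n])=\Theta(n^2/k_n)$, and I would cite (or reprove via a second-moment / Chebyshev argument on the multinomial allocation) that $m_{T_n}/\E[m_{T_n}]\wprto 1$, giving Assumption~\ref{assumption:concentration_mT}; note $1\ll\E[m_{T_n}]\ll n^2$ is immediate from $1\ll k_n\ll n$.

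Next I would check Assumption~\ref{assumption:size-sparsity}. Since $q_n=\bigO(n^{-1})$ and (in parts 1 and 2) $p_n\le 1$, the condition $n^2\E[S_n^2]q_n^3p_n^2=o(1)$ follows from $\E[S_n^2]=o(n)$, which in turn follows from $n^2k_n^{1-\tau}=o(n)$, i.e. $k_n\gg n^{1/(\tau-1)}$ — exactly the lower bound imposed on $k_n$ in all three parts (together with $k_n\gg\sqrt n$ which handles the $\tau\ge 3$ range via $\E[S_n^2]=O(n^2/k_n^2\log)=o(n)$ when $k_n\gg\sqrt n$). The second condition $q_n=o(n^{-4/5})$ is satisfied since $q_n=\bigO(n^{-1})$. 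For part 3, $p=p_n$ is a constant but bounded by $1$, so the same estimate goes through.

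For the minimum-size assumptions I would argue as follows. For part 2 (almost exact recovery), I need Assumption~\ref{assumption:minimum_size_soft}: there is $s_n^{(\min)}\to\infty$ with $\P(1<S_n<s_n^{(\min)})\to 0$. Since $S_n/(n/k_n)$ converges to a Pareto law which has no atom at $0$ and puts positive mass on every neighborhood of $0$, but $\P(S_n=1)$ itself may be bounded away from $0$... actually here I must be careful: the relevant statement is that, \emph{conditionally on being non-singleton}, a typical vertex lies in a community of size $\gg 1$; this is false for generic power-law partitions where an $\Omega(1)$ fraction of communities have size $2$. Re-reading Theorem~\ref{thm:almost_exact}: the choice $p_n=\sqrt{(2\log s_n^{(\min)}+\cdots)/s_n^{(\min)}}$ together with $p_n^2\ge \tfrac{3\tau}{\tau-1}\tfrac{k_n\log(n/k_n)}{n}$ forces $s_n^{(\min)}=\Theta((n/k_n)/\log(n/k_n))$, so in fact what must be shown is the weaker quantitative statement that $\P(1<S_n<s_n^{(\min)})\to 0$ for $s_n^{(\min)}$ of this order — but since $S_n$ is \emph{concentrated} around $n/k_n\gg s_n^{(\min)}$ only in the size-biased sense, I should instead check it directly from $S_n/(n/k_n)\stackrel{D}{\to}\text{Pareto}(1-\tfrac1\tau,\tau-1)$: the Pareto variable is $\ge 1-\tfrac1\tau>0$ almost surely, so $\P(S_n<\epsilon n/k_n)\to 0$ for small $\epsilon$, which is exactly what is needed with $s_n^{(\min)}=\epsilon n/k_n$ (and $1<S_n<s_n^{(\min)}$ then has vanishing probability). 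The plug-in $p_n\ge\sqrt{3\tau k_n\log(n/k_n)/((\tau-1)n)}$ dominates $\sqrt{(2+o(1))\log s_n^{(\min)}/s_n^{(\min)}}$ because $\log s_n^{(\min)}\sim\log(n/k_n)$ and $1/s_n^{(\min)}=\Theta(k_n/n)$ up to the constant $\tfrac{3\tau}{\tau-1}>2\cdot\tfrac{\tau}{\tau-1}$, so Theorem~\ref{thm:almost_exact} applies. For part 1 (exact recovery) I need the stronger Assumption~\ref{assumption:minimum_size}: \emph{every} community has size either $1$ or $\ge s_n^{(\min)}$; for a power-law partition this typically fails unless $n/k_n$ is large and all allocated communities are large, so here the extra upper bound $k_n\le\tfrac{\varepsilon^2}{4}\tfrac{\tau-1}{\tau}\tfrac{n}{\log n}$ is doing real work — it forces $n/k_n\gtrsim\tfrac{4\tau}{\varepsilon^2(\tau-1)}\log n$, and then a union bound over the $k_n$ communities shows that $\P(\exists a:\ 2\le|T_n(a)|<s_n^{(\min)})\to 0$ for a suitable $s_n^{(\min)}\to\infty$ (each community has mean size $\asymp n/k_n\gg\log n$, so a Chernoff bound gives probability $o(1/k_n)$ of being abnormally small, while singleton communities are allowed); then $\E[m_T]=\Theta(n^2/k_n)$, $\log\E[m_T]=(1+o(1))\log n$, and one checks $\sqrt{(\log\E[m_T]+\log\log\E[m_T]+\omega(1))/s_n^{(\min)}}\le p_n$ using the given bound $p_n^2\ge\tfrac{3\tau}{\tau-1-\varepsilon}\tfrac{k_n\log n}{n}$. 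For part 3 (weak recovery), I must exhibit a random variable $S$ with $\E[S]<\infty$, $\P(S\ge 4)>0$, $(S_n\mid S_n>1)\stackrel{D}{\to}S$ and $\cE{S_n}{S_n>1}\to\E[S]$: this is exactly Lemma~\ref{lem:powerlaw-mixed-Poisson}, which for $k_n\sim n/s$ gives $S_n-1\stackrel{D}{\to}\text{Poisson}(Z)$ with $Z\sim\text{Pareto}(s(1-\tfrac1\tau),\tau-1)$; conditioning on $S_n>1$ is conditioning on $\text{Poisson}(Z)\ge 1$, which has positive limiting probability, so $(S_n\mid S_n>1)\stackrel{D}{\to}(1+\text{Poisson}(Z)\mid\text{Poisson}(Z)\ge 1)=:S$, and $\E[S]<\infty$ since $\E[Z]<\infty$ for $\tau>2$; $\P(S\ge 4)>0$ because the mixed Poisson has full support on $\{0,1,2,\dots\}$; and the convergence of the conditional expectation follows from uniform integrability of $S_n$, which comes from $\E[S_n^2]=o(n)$ established above (giving $\sup_n\E[S_n\mathbbm{1}\{S_n>M\}]\to 0$ as $M\to\infty$). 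Then Theorem~\ref{thm:weak} applies directly.

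The main obstacle I anticipate is the bookkeeping around the second moment $\E[S_n^2]$ and around $m_{T_n}$ near the boundary $\tau\to 2^+$ and $\tau\to 3$: the Pareto limit has infinite second moment throughout $2<\tau\le 3$, so the estimate $\E[S_n^2]=\Theta(n^2k_n^{1-\tau})$ must be obtained from the finite-$n$ truncation (largest community has size $O(n)$, and more precisely the top community proportion $\Pi^*$ is $\Theta((\log k_n)/(\tau\,?))$... rather, $\max_a\Pi_a\asymp(\tau)^{-1}?$ — one uses that $\max_a X_a\sim\log k_n$ so $\max_a e^{X_a/\tau}\sim k_n^{1/\tau}$, giving top proportion $\asymp k_n^{1/\tau-1}$ and top community size $\asymp n k_n^{1/\tau-1}$, whence its square contributes $\asymp n^2 k_n^{2/\tau-2}\cdot k_n = n^2k_n^{2/\tau-1}$ to $\sum_a(\cdot)^2$ — this needs to be compared carefully with $n\E[S_n^2]$), so I would want to state a clean lemma bounding $\E[S_n^2]$ and $\E[m_{T_n}]$ and $\operatorname{Var}(m_{T_n})$ for $\text{Powerlaw}(\tau,k_n,n)$ uniformly, and then the three parts of the corollary become routine substitutions. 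A secondary subtlety is confirming that the stated lower bounds on $p_n^2$ (with their explicit constants $\tfrac{3\tau}{\tau-1-\varepsilon}$ and $\tfrac{3\tau}{\tau-1}$) really dominate the thresholds $\tfrac{\log\E[m_T]+\cdots}{s_n^{(\min)}}$ and $\tfrac{2\log s_n^{(\min)}+\cdots}{s_n^{(\min)}}$ — this hinges on the relation $s_n^{(\min)}=\Theta(n/k_n)$ and on $\log(n/k_n)$ versus $\log n$, which is where the upper bound on $k_n$ in part 1 (but not part 2) matters.
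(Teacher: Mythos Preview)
Your overall strategy matches the paper's exactly: in each part, verify the hypotheses of the corresponding general recovery theorem (Theorems~\ref{thm:exact}, \ref{thm:almost_exact}, \ref{thm:weak}) using structural facts about $\text{Powerlaw}(\tau,k_n,n)$. Your checks of Assumption~\ref{assumption:size-sparsity} via $\E[S_n^2]=o(n)$ when $k_n\gg\max\{\sqrt{n},n^{1/(\tau-1)}\}$, of Assumption~\ref{assumption:minimum_size_soft} via the Pareto limit being supported on $[1-\tfrac1\tau,\infty)$, and of the $p_n$ thresholds all line up with the paper's Lemmas~\ref{lem:powerlaw-condition}, \ref{lem:powerlaw-mT}, \ref{lem:powerlaw-minsize} and Theorem~\ref{thm:power-law-rescaled}.

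There is, however, one step that fails. In part~(iii) you assert that uniform integrability of $(S_n)$ ``comes from $\E[S_n^2]=o(n)$'', but this implication is false: $\E[S_n^2]=o(n)$ permits $\E[S_n^2]\to\infty$, and in fact for $2<\tau<3$ with $k_n\sim n/s$ one has $\E[S_n^2]=\Theta(n^{3-\tau})\to\infty$, so your parenthetical ``giving $\sup_n\E[S_n\mathbbm{1}\{S_n>M\}]\to 0$'' is a non sequitur. The paper avoids the issue entirely by computing $\E[S_n-1]=\tfrac{2}{n}\E[m_{T_n}]$ directly from Lemma~\ref{lem:powerlaw-mT}, obtaining $\E[S_n-1]\to s\cdot\tfrac{(\tau-1)^2}{\tau(\tau-2)}$, and checking by hand that this equals the mean of the limiting $\text{Pareto}(s(1-\tfrac1\tau),\tau-1)$ mixture; since $(S_n-1)\mathbbm{1}\{S_n=1\}=0$, convergence of $\cE{S_n-1}{S_n>1}$ then follows immediately from convergence of $\P(S_n>1)$. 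If you prefer a uniform-integrability argument, you would need $\sup_n\E[S_n^{1+\delta}]<\infty$ for some $0<\delta<\tau-2$, which does hold (via the moment asymptotics of Lemma~\ref{lem:pi-size-bias}), but is not what you wrote. A smaller imprecision in part~(i): the communities do not all have mean size $\asymp n/k_n$, since the proportions $\Pi_a$ are random and heterogeneous; the paper (Lemma~\ref{lem:powerlaw-minsize}) first shows $k_n\min_a\Pi_a\wprto 1-\tfrac1\tau$ and only then applies the Chernoff/union bound with this minimal proportion, which is precisely where the upper bound $k_n\le\tfrac{\varepsilon^2}{4}\tfrac{\tau-1}{\tau}\tfrac{n}{\log n}$ is consumed.
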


\begin{proof}
(i) To establish that Algorithm~\ref{alg:commonNeighborsPartitioning} achieves exact recovery of $T_n$, we show that the conditions of Theorem~\ref{thm:exact} are satisfied. 
 \cref{assumption:size-sparsity} holds since we assumed $q_n = \bigO(n^{-1})$ while \cref{lem:powerlaw-condition} in Appendix~\ref{subsection:additional_lemmas_power-law} shows that $\E [S_n^2] = o(n)$. \cref{lem:powerlaw-minsize} in
Appendix~\ref{subsection:powerlaw-minsize} shows that with high probability, all communities are larger than $s_n^{(\min)}=\tfrac{(\tau-1-\varepsilon)n}{\tau k}$. Hence  \cref{assumption:minimum_size} is satisfied. Finally, the assumption on $p_n$ ensures that the bound in \cref{thm:exact} is satisfied, which completes the proof. 

(ii)  Similarly, to establish almost exact recovery, we prove that the assumptions of \cref{thm:almost_exact} are satisfied. Again, \cref{assumption:size-sparsity} follows from our assumption on $q_n$ and \cref{lem:powerlaw-condition}. Moreover, \cref{lem:powerlaw-mT} establishes \cref{assumption:concentration_mT}. \cref{thm:power-law-rescaled} shows that $\P(k_nS_n/n\ge1-\tau^{-1})\to1$, so that \cref{assumption:minimum_size_soft} is satisfied with $s_n^{(\min)}=(1-\tau^{-1})\tfrac{n}{k_n}$. Finally, the assumption on $p_n$ ensures that the bound in \cref{thm:almost_exact} is satisfied.

(iii)  Finally, to prove weak recovery, we show that the assumptions of Theorem~\ref{thm:weak} are satisfied. Assumptions~\ref{assumption:size-sparsity} and~\ref{assumption:concentration_mT} are implied by the assumption on $q_n$, \cref{lem:powerlaw-condition}, and \cref{lem:powerlaw-mT}.
    \cref{lem:powerlaw-mixed-Poisson} tells us that $S_n-1$ converges in distribution to a mixed Poisson with Pareto mixture. This random variable has a finite mean. Hence, the distribution conditioned on $S_n>1$ must also converge to a random variable with finite mean. Additionally, $\P(S_n\ge4)$ has a positive limit. What remains to show is that the expectation of $\E[S_n-1]$ converges to the expectation of our mixed Poisson random variable. We write $\E[S_n-1]=\tfrac2n\E[m_T]$ and use \cref{lem:powerlaw-mT} to conclude that
    \[
    \E[S_n-1] \wsim \frac{n(\tau-1)^2}{k\tau(\tau-2)} \ \to \ s\cdot \frac{(\tau-1)^2}{\tau(\tau-2)}.
    \]
    The expectation of a mixed Poisson random variable is equal to the expectation of the mixture distribution. The expectation of $Z\sim\text{Pareto}(c,\beta)$ is $\E[Z]=\frac{\beta\cdot c}{\beta-1}$. Substituting $\beta=\tau-1$ and $c=s\cdot (1-\tau^{-1})$ yields
    \[
    \E[Z] \weq s\cdot\frac{(1-\tau^{-1})(\tau-1)}{\tau-2} \weq s\cdot \frac{(\tau-1)^2}{\tau(\tau-2)}.
    \]
    This tells us that $\E[S_n-1]$ indeed converges to the expectation of the Poisson mixture. Therefore, this also holds after conditioning on $S_n>1$ and $S>1$. We conclude that the conditions of \cref{thm:weak} are satisfied.

\end{proof}

\section{Experiments}\label{section:experiments}

 In this section, we evaluate the empirical performance of the Diamond Percolation method on random graphs and compare it with other community detection algorithms. The code for these experiments is available on GitHub\footnote{See \href{https://github.com/MartijnGosgens/DetectingSmallCommunities.git}{https://github.com/MartijnGosgens/DetectingSmallCommunities}.}.
 We illustrate the statements (1) and (2) of \cref{cor:recovery_power_law} in Figures~\ref{fig:almost-exact} and~\ref{fig:exact}, respectively. Specifically, we sample graphs $G_n \sim \PPM(T_n,p,q_n)$ with parameters $p=0.4$, $q_n=5/n$, and partition $T_n \sim \text{Powerlaw}(\tau,k_n,n)$, where $\tau=3$ and $k_n=\lfloor n^{2/3}\rfloor$. According to \cref{cor:recovery_power_law}, Algorithm~\ref{alg:commonNeighborsPartitioning} achieves both exact and almost exact recovery in this regime.
Figure~\ref{fig:almost-exact} reports the average correlation coefficient $\rho(T_n,\cD(G_n))$, together with confidence bounds. This quantity converges to $1$ as $n$ increases, indicating almost exact recovery. Figure~\ref{fig:exact} shows the fraction of trials in which exact recovery is achieved (i.e., $\cD(G_n)=T_n$); this fraction also approaches $1$ for large $n$. 
Because exact recovery is a much stronger requirement than almost exact recovery, it only becomes noticeable for larger~$n$: the event $\cD(G_n)=T_n$ only starts to be  observed for $n\ge4\cdot10^5,$ while $\rho(\cD(G_n),T_n)\approx0.99$ already holds for $n=10^5$.

\begin{figure}[!ht]
\centering
\begin{subfigure}[b]{0.48\linewidth}
    \includegraphics[width=\linewidth]{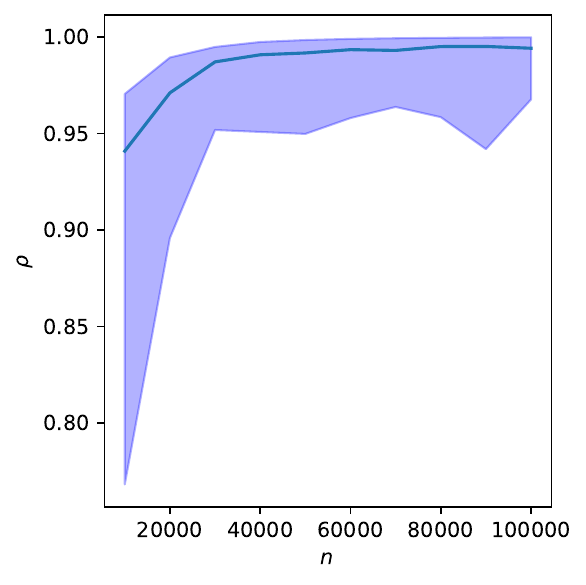}
    \caption{Correlation coefficient}
 \label{fig:almost-exact}
 \end{subfigure} \hfill
\begin{subfigure}[b]{0.48\linewidth}
    \includegraphics[width=\linewidth]{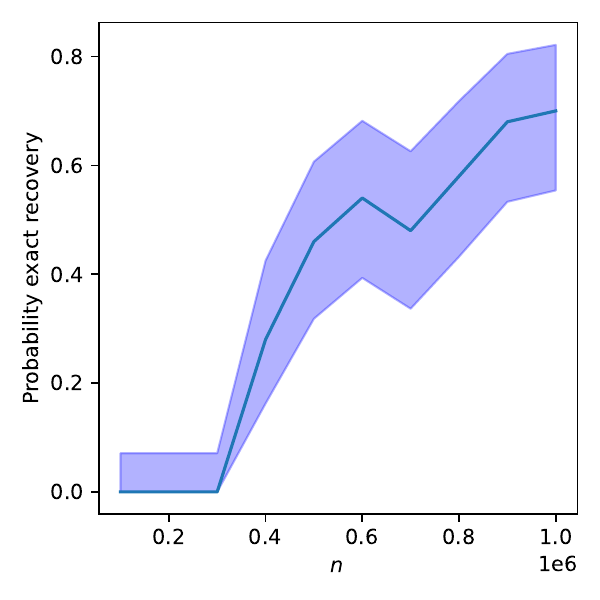}
    \caption{Fraction of exact recovery}
    \label{fig:exact}
\end{subfigure} 
    \caption{We apply $\cD$ to $G_n\sim\PPM(T_n,p,q_n)$ for $p=0.4,q_n=5/n$ and $T_n\sim\text{Powerlaw}(\tau,k_n,n)$ with power-law exponent $\tau=3$ and $k_n\sim n^{2/3}$ communities.\\ 
    Figure~\ref{fig:almost-exact}: For $n$ between $10^4$ and $10^5$, we report the average $\rho(T_n,\cD(G_n))$ among $120$ graph samples. The shaded areas denote $95\%$ confidence bounds. \\ 
    Figure~\ref{fig:exact}: For $n$ between $10^5$ and $10^6$, we report the fraction of times (among 50 runs) that $\cD$ achieved exact recovery (i.e., $\cD(G_n)=T_n$). We calculate $95\%$ confidence intervals using Wilson score intervals.}
    \label{fig:power-law-validation}
\end{figure}

 In Figure~\ref{fig:almost-exact-boltzmann}, we illustrate \cref{thm:almost_exact}, focusing in particular on the uniform-partition case described in \cref{ex:uniform}. We evaluate the performance of $\cD$ on PPMs in which the underlying partition is drawn uniformly at random from the set of all partitions of size $n$. Because sampling from the uniform distribution over partitions of $[n]$ is challenging, we employ Boltzmann samplers~\citep{duchon2004boltzmann} to generate a random partition $T_n$ whose number of vertices $N(T_n)$ is random but satisfies $\E[N(T_n)]=n$ and $N(T_n)/n \wprto 1$. A key property of this sampler is that, for any $n'$, conditional on $N(T_n)=n'$, the partition $T_n$ is uniformly distributed among all partitions of $[n']$. 

 Figure~\ref{fig:almost-exact-boltzmann} shows that the correlation coefficient converges very slowly to $1$. This behavior is expected, as $S_n=\log n+\bigO(\sqrt{\log n})$~\citep{gosgens2024erd}, which grows slowly with $n$. The figure also highlights a strong concentration of the correlation coefficient for fixed $n$, attributable to the large number of communities.  

\begin{figure}
    \centering
    \includegraphics[width=0.7\linewidth]{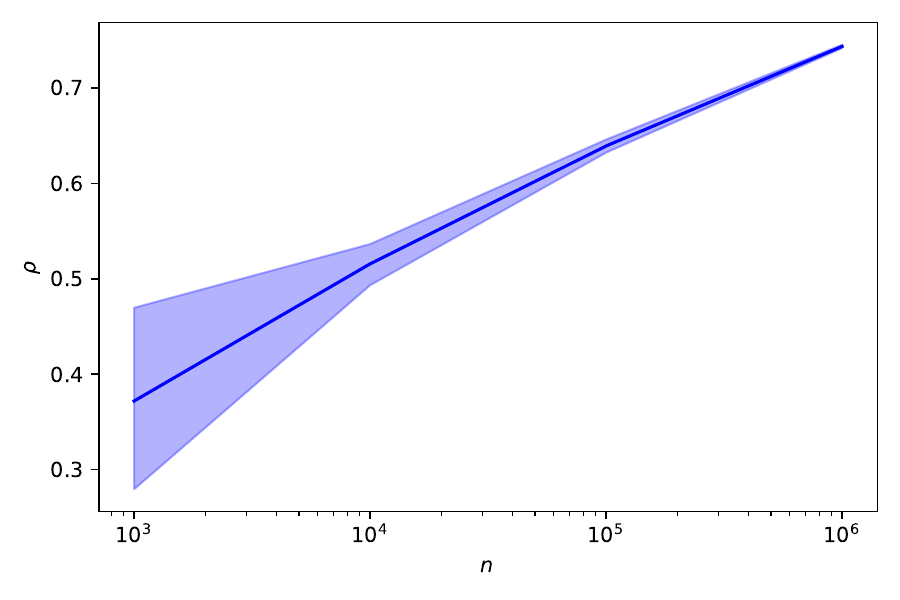}
    \caption{We apply $\cD$ to $G_n\sim\PPM(T_n,p,q_N)$ for $p=0.4,q_N=5/N$ and $T_n$ generated by a Boltzmann sampler. We report the average $\rho(T_n,\cD(G_n))$ among $40$ graph samples for each $n$. The shaded region denotes the maximum and minimum observed $\rho$.}
    \label{fig:almost-exact-boltzmann}
\end{figure}

Finally, we compare $\cD$ to two community detection methods that are widely used in practice. The first is the \emph{Louvain} algorithm~\citep{blondel2008fast}, which greedily maximizes \emph{modularity}~\citep{newman2006modularity}. The second is \emph{Bayesian stochastic block modeling}~\citep{peixoto2019bayesian}, which infers communities by maximizing the posterior probability within a Bayesian framework.

Figure~\ref{fig:comparison} reports the performance of these three methods on PPMs with a linear number of communities. Figure~\ref{fig:comparison-balanced} corresponds to balanced partitions, in which each community has size $20$, while Figure~\ref{fig:comparison-multinomial} considers multinomial partitions, where community sizes follow a binomial distribution with parameters $n$ and $20/n$. 
For small graphs, both the Louvain algorithm and the Bayesian method outperform $\cD$. As the graph size increases, however, the performance of $\cD$ stabilizes at a high value, whereas the performance of the competing methods degrades. These behaviors are consistent with known limitations of the Louvain and Bayesian approaches. In particular, the Louvain algorithm suffers from the \emph{resolution limit}~\citep{fortunato2007resolution}, which prevents it from reliably detecting small communities in large graphs. Similarly, Bayesian SBM methods are known~\citep{peixoto2017nonparametric} to struggle with communities of size $o(\sqrt n)$. In contrast, \cref{thm:weak} guarantees that $\cD$ can recover communities of bounded size. 
Moreover, by comparing Figures~\ref{fig:comparison-balanced} and~\ref{fig:comparison-multinomial}, we observe that $\cD$ performs better on multinomial partitions than on balanced ones. This difference can be attributed to the heterogeneity in community sizes induced by the multinomial partitions and to the fact that $\cD$ more reliably recovers larger communities (see Figure~\ref{fig:scores-p}).

\begin{figure}[!ht]
\centering
\begin{subfigure}[b]{0.48\linewidth}
    \includegraphics[width=\linewidth]{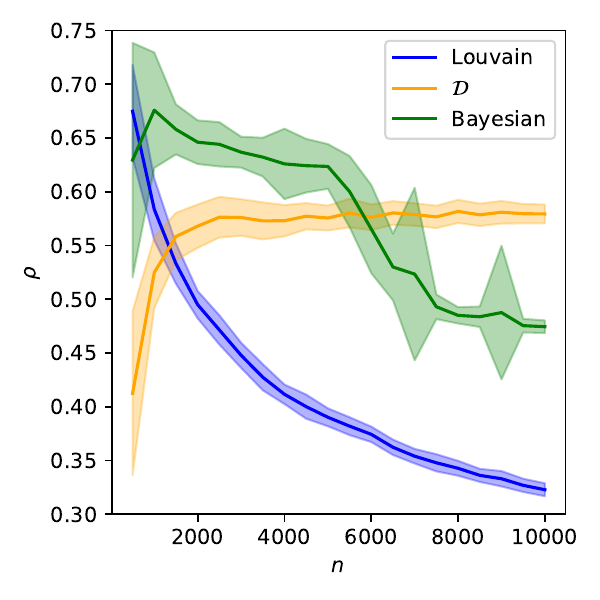}
    \caption{Balanced communities.}
 \label{fig:comparison-balanced}
 \end{subfigure} \hfill
\begin{subfigure}[b]{0.48\linewidth}
    \includegraphics[width=\linewidth]{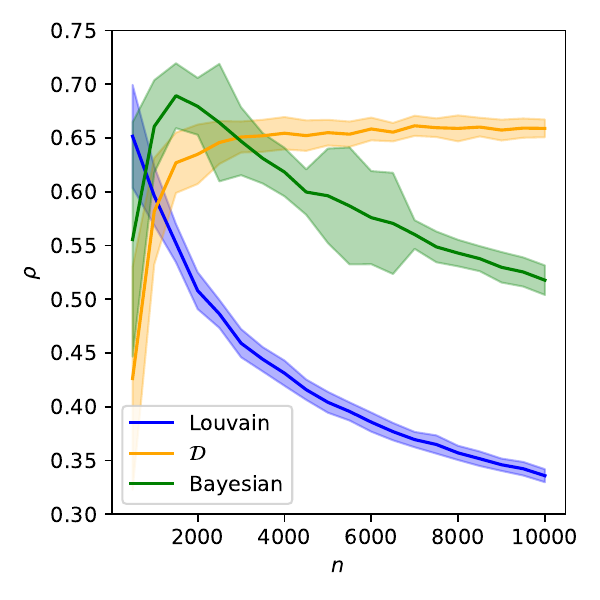}
    \caption{Multinomial partitions.}
    \label{fig:comparison-multinomial}
\end{subfigure} 
    \caption{We compare $\cD$ to the Louvain algorithm~\citep{blondel2008fast} and the Bayesian stochastic blockmodeling methodology~\citep{peixoto2019bayesian} for graphs of different size $n$ with $n/20$ communities, $p=5/19$ and $q_n=5/(n-20)$. We plot the average $\rho(C_n,T_n)$. The shaded regions denote one standard deviation.\\ Figure~\ref{fig:comparison-balanced}: comparison for PPMs with $T_n\sim\text{Balanced}(n,\lfloor n/20\rfloor,20)$; Figure~\ref{fig:comparison-multinomial}: comparison for PPMs with $T_n\sim\text{Multinomial}(n,[20/n,\dots,20/n])$.}
    \label{fig:comparison}
\end{figure}

\section{Discussion and Future Work}
\label{section:discussion}
In this work, we presented Diamond Percolation, a simple community detection method that runs in polynomial time and requires no parameter knowledge. We proved several conditions under which our algorithm achieves exact, almost exact, and weak recovery. In this section, we discuss ways in which the results could be extended and relate our methods to existing work.

\medskip
\textbf{Isolated vertices.} Because Diamond Percolation clusters vertices based on overlapping triangles, every vertex of degree less than three will be isolated in $G^*$. Therefore, such vertices will form singleton communities in $C_n$. Sometimes, as is the case in \cref{fig:example}, each of the edges of such a low-degree vertex connect to the same community. This suggests that we may be able to improve the performance by assigning such isolated low-degree vertices to a neighboring community. However, doing so may affect the validity of \cref{thm:refinement-generalized}. Improving Algorithm~\ref{alg:commonNeighborsPartitioning} by relabeling these low-degree vertices is an interesting direction for future research.


\medskip
\textbf{Size-dependent community densities.} Our results allow for communities of heterogeneous sizes, where the largest communities may differ orders of magnitude from the smallest communities. For a vertex in a community of size $s_n\gg1$, its degree is of the order $s_n p_n$. This leads to a linear dependence between the community size and the degree of a vertex. While it has been observed in practice that large-degree vertices tend to be part of large communities, this relation is typically sub-linear~\citep{stegehuis2016power}. This suggests that large communities should be sparser than small communities, which would be achieved by allowing for a size-dependent density $p_n(s_n)$ that decreases in~$s_n$ for fixed $n$.
We believe that most of our results can be extended to allow for these size-dependent intra-community densities, at the expense of more complicated expressions.

\medskip
 \textbf{Recovery thresholds.} We already discussed in Section~\ref{subsection:special_cases_PPM} some recent works establishing information-theoretic thresholds for weak and exact recovery in the PPM. However, unlike Diamond Percolation, the algorithms studied in these works typically assume knowledge of the model parameters and focus on balanced or near-balanced community structures. As a result, the corresponding information-theoretic thresholds fall outside the scope of the present article, which prioritizes parameter-free recovery under highly flexible community configurations. A systematic investigation of these thresholds under similarly minimal structural assumptions is an important avenue of future research. 

\medskip
\textbf{Recovering the largest communities.}
The difficulty of community detection is typically driven by the presence of small communities. As a result, recent works study the recovering of the \textit{largest} communities only (typically of size $\Omega(\sqrt{n})$) in the presence of an arbitrary number of smaller communities \citep{ailon2015iterative,mukherjee2024detecting}. While in this paper, we focus on the recovery of all communities, we could adapt Example~\ref{example:exact_recovery_balanced} in Section~\ref{subsection:exact_recovery} to the recovery of communities of size at least $s_n$ among smaller communities. 

\medskip
\textbf{Single-linkage agglomerative clustering.}
The Diamond Percolation algorithm shares a conceptual similarity with single-linkage hierarchical clustering, a widely used agglomerative clustering method. In single-linkage clustering, two clusters are merged if they contain at least one pair of points that are sufficiently close, gradually forming larger clusters. Similarly, Diamond Percolation constructs a refined version of the input graph by preserving only edges that participate in at least two triangles, effectively filtering out weak connections. The final communities are then identified as the connected components of this filtered graph. This approach can be seen as a form of hierarchical clustering where the linkage criterion is based on common neighbors rather than direct pairwise distances. By setting a threshold of two shared neighbors, Diamond Percolation implicitly prioritizes denser local structures.

Hence, one could study a version of Diamond Percolation with a threshold different than two. Varying the threshold would return a hierarchy of partitions, where larger thresholds give rise to finer partitions. Recent work studying hierarchical extension of the stochastic block model show that linkage algorithms recover the hierarchy of communities when the communities are of size $\Theta(n)$~\citep{dreveton2023does}. Our analysis hints that one could recover the hierarchy using linkage algorithms even when the communities are of much smaller size. 

\medskip
\textbf{Adjusting for degree heterogeneity.}
Furthermore, while we chose the number of shared triangles as the similarity measure between two vertices $i$ and~$j$, this metric can be modified based on their degrees. For example, \citet{michielan2022detecting} re-weigh triangles based on the degrees of the participating vertices to account for degree heterogeneity. Additionally, \citet{bonald2018hierarchical} proposes a distance measure based on node pair sampling. 

\medskip
\textbf{Geometric models.}
Finally, beside the stochastic block model, triangle-counting algorithms recover the planted partition in models with geometry~\citep{galhotra2023community}. In such models, the typical number of triangles is much larger than in the PPM, and communities can be detected based on shared neighborhoods better than in the PPM. Establishing the recovery of planted partitions in geometric models with small-size communities is an interesting avenue for future research.  



\bibliographystyle{amsplain}
\bibliography{biblio}

@article{dreveton2023does,
  title={When does bottom-up beat top-down in hierarchical community detection?},
  author={Dreveton, Maximilien and Kuroda, Daichi and Grossglauser, Matthias and Thiran, Patrick},
  journal={Journal of the American Statistical Association},
  number={},
  volume={},
  pages={1--12},
  year={2026},
  publisher={Taylor \& Francis}
}

@article{carpentier2025phase,
  title={Phase Transition for Stochastic Block Model with more than {$\sqrt n$} Communities},
  author={Carpentier, Alexandra and Giraud, Christophe and Verzelen, Nicolas},
  journal={arXiv preprint arXiv:2509.15822},
  year={2025}
}

@inproceedings{chinstochastic,
  title={Stochastic block models with many communities and the Kesten--Stigum bound-extended abstract},
  author={Chin, Byron and Mossel, Elchanan and Sohn, Youngtak and Wein, Alexander S},
  booktitle={Proceedings of Thirty Eighth Conference on Learning Theory},
  pages={1253--1258},
year={2025}
}

@article{peixoto2017nonparametric, title={Nonparametric Bayesian inference of the microcanonical stochastic block model}, volume={95}, rights={http://link.aps.org/licenses/aps-default-license}, ISSN={2470-0045, 2470-0053}, DOI={10.1103/PhysRevE.95.012317}, number={1}, journal={Physical Review E}, author={Peixoto, Tiago P.}, year={2017}, month=jan, pages={012317}, language={en} }

@article{duchon2004boltzmann,
  title={Boltzmann samplers for the random generation of combinatorial structures},
  author={Duchon, Philippe and Flajolet, Philippe and Louchard, Guy and Schaeffer, Gilles},
  journal={Combinatorics, Probability and Computing},
  volume={13},
  number={4-5},
  pages={577--625},
  year={2004},
  publisher={Cambridge University Press}
}

@article{galhotra2023community,
  title={Community recovery in the geometric block model},
  author={Galhotra, Sainyam and Mazumdar, Arya and Pal, Soumyabrata and Saha, Barna},
  journal={Journal of Machine Learning Research},
  volume={24},
  number={338},
  pages={1--53},
  year={2023}
}

@article{Schramm_Wein_2022,
author = {Tselil Schramm and Alexander S. Wein},
title = {{Computational barriers to estimation from low-degree polynomials}},
volume = {50},
journal = {The Annals of Statistics},
number = {3},
publisher = {Institute of Mathematical Statistics},
pages = {1833 -- 1858},
keywords = {computational complexity, Low-degree polynomials, planted dense subgraph, planted submatrix},
year = {2022},
doi = {10.1214/22-AOS2179},
URL = {https://doi.org/10.1214/22-AOS2179}
}

@article{chen2016statistical,
  title={Statistical-computational tradeoffs in planted problems and submatrix localization with a growing number of clusters and submatrices},
  author={Chen, Yudong and Xu, Jiaming},
  journal={Journal of Machine Learning Research},
  volume={17},
  number={27},
  pages={1--57},
  year={2016}
}

@article{michielan2022detecting,
  title={Detecting hyperbolic geometry in networks: Why triangles are not enough},
  author={Michielan, Riccardo and Litvak, Nelly and Stegehuis, Clara},
  journal={Physical Review E},
  volume={106},
  number={5},
  pages={054303},
  year={2022},
  publisher={APS}
}

@article{stegehuis2016power,
  title={Power-law relations in random networks with communities},
  author={Stegehuis, Clara and Van Der Hofstad, Remco and Van Leeuwaarden, Johan SH},
  journal={Physical Review E},
  volume={94},
  number={1},
  pages={012302},
  year={2016},
  publisher={APS}
}

@article{scheffe1947useful,
  title={A useful convergence theorem for probability distributions},
  author={Scheff{\'e}, Henry},
  journal={The Annals of Mathematical Statistics},
  volume={18},
  number={3},
  pages={434--438},
  year={1947},
  publisher={JSTOR}
}

@article{roos2001sharp,
  title={Sharp constants in the Poisson approximation},
  author={Roos, Bero},
  journal={Statistics \& probability letters},
  volume={52},
  number={2},
  pages={155--168},
  year={2001},
  publisher={Elsevier}
}

@article{arratia2019size,
author = {Richard Arratia and Larry Goldstein and Fred Kochman},
title = {{Size bias for one and all}},
volume = {16},
journal = {Probability Surveys},
publisher = {Institute of Mathematical Statistics and Bernoulli Society},
pages = {1--61},
year = {2019},
doi = {10.1214/13-PS221},
URL = {https://doi.org/10.1214/13-PS221}
}

@book{sachkov1997probabilistic,
  title={Probabilistic methods in combinatorial analysis},
  author={Sachkov, Vladimir N},
  number={56},
  year={1997},
  publisher={Cambridge University Press}
}

@article{pittel1997random,
  title={Random set partitions: asymptotics of subset counts},
  author={Pittel, Boris},
  journal={Journal of Combinatorial Theory, Series A},
  volume={79},
  number={2},
  pages={326--359},
  year={1997},
  publisher={Elsevier}
}

@article{romano2016adjusting,
  title={Adjusting for chance clustering comparison measures},
  author={Romano, Simone and Vinh, Nguyen Xuan and Bailey, James and Verspoor, Karin},
  journal={Journal of Machine Learning Research},
  volume={17},
  number={134},
  pages={1--32},
  year={2016}
}

@article{lancichinetti2008benchmark,
  title={Benchmark graphs for testing community detection algorithms},
  author={Lancichinetti, Andrea and Fortunato, Santo and Radicchi, Filippo},
  journal={Physical Review E—Statistical, Nonlinear, and Soft Matter Physics},
  volume={78},
  number={4},
  pages={046110},
  year={2008},
  publisher={APS}
}

@article{harper1967stirling,
  title={Stirling behavior is asymptotically normal},
  author={Harper, LH},
  journal={The Annals of Mathematical Statistics},
  volume={38},
  number={2},
  pages={410--414},
  year={1967},
  publisher={Institute of Mathematical Statistics}
}

@article{gosgens2024erd,
  title={The {E}rdős-{R}ényi Random Graph Conditioned on Every Component Being a Clique},
  author={G{\"o}sgens, Martijn and L{\"u}chtrath, Lukas and Magnanini, Elena and Noy, Marc and de Panafieu, {\'E}lie},
  journal={arXiv preprint arXiv:2405.13454},
  year={2024}
}

@article{luo2023computational,
  title={Computational lower bounds for graphon estimation via low-degree polynomials},
  author={Luo, Yuetian and Gao, Chao},
  journal={The Annals of Statistics},
  volume={52},
  number={5},
  pages={2318--2348},
  year={2024},
  publisher={Institute of Mathematical Statistics}
}

@inproceedings{rush2022easier,
  author={Cynthia Rush and Fiona Skerman and Alexander S. Wein and Dana Yang},
  title={Is It Easier to Count Communities Than Find Them?},
  year={2023},
  cdate={1672531200000},
  pages={94:1-94:23},
  url={https://doi.org/10.4230/LIPIcs.ITCS.2023.94},
  booktitle={ITCS}
}

@article{voitalov2019scale,
  title={Scale-free networks well done},
  author={Voitalov, Ivan and Van Der Hoorn, Pim and Van Der Hofstad, Remco and Krioukov, Dmitri},
  journal={Physical Review Research},
  volume={1},
  number={3},
  pages={033034},
  year={2019},
  publisher={APS}
}

@article{blondel2008fast,
  title={Fast unfolding of communities in large networks},
  author={Blondel, Vincent D and Guillaume, Jean-Loup and Lambiotte, Renaud and Lefebvre, Etienne},
  journal={Journal of statistical mechanics: theory and experiment},
  volume={2008},
  number={10},
  pages={P10008},
  year={2008},
  publisher={IOP Publishing}
}

@article{newman2006modularity,
  title={Modularity and community structure in networks},
  author={Newman, Mark EJ},
  journal={Proceedings of the National Academy of Sciences},
  volume={103},
  number={23},
  pages={8577--8582},
  year={2006},
  publisher={National Academy of Sciences}
}

@article{fortunato2007resolution,
  title={Resolution limit in community detection},
  author={Fortunato, Santo and Barthelemy, Marc},
  journal={Proceedings of the National Academy of Sciences},
  volume={104},
  number={1},
  pages={36--41},
  year={2007},
  publisher={National Academy of Sciences}
}

@article{peixoto2019bayesian,
  title={Bayesian stochastic blockmodeling},
  author={Peixoto, Tiago P},
  journal={Advances in network clustering and blockmodeling},
  pages={289--332},
  year={2019},
  publisher={Wiley Online Library}
}

@inproceedings{gosgens2021systematic,
  title={Systematic analysis of cluster similarity indices: How to validate validation measures},
  author={G{\"o}sgens, Martijn M and Tikhonov, Alexey and Prokhorenkova, Liudmila},
  booktitle={International Conference on Machine Learning},
  pages={3799--3808},
  year={2021},
  organization={PMLR}
}

@article{abbe2018community,
  title={Community detection and stochastic block models: recent developments},
  author={Abbe, Emmanuel},
  journal={Journal of Machine Learning Research},
  volume={18},
  number={177},
  pages={1--86},
  year={2018}
}

@inproceedings{hajek2015computational,
  title={Computational lower bounds for community detection on random graphs},
  author={Hajek, Bruce and Wu, Yihong and Xu, Jiaming},
  booktitle={Conference on Learning Theory},
  pages={899--928},
  year={2015},
  organization={PMLR}
}

@inproceedings{vinh2009information,
  title={Information theoretic measures for clusterings comparison: is a correction for chance necessary?},
  author={Vinh, Nguyen Xuan and Epps, Julien and Bailey, James},
  booktitle={Proceedings of the 26th annual international conference on machine learning},
  pages={1073--1080},
  year={2009}
}

@inproceedings{bonald2018hierarchical,
  title = {Hierarchical Graph Clustering using Node Pair Sampling},
  author = {Bonald, Thomas and Charpentier, Bertrand and Galland, Alexis and Hollocou, Alexandre},
  booktitle = {MLG 2018 - 14th International Workshop on Mining and Learning with Graphs},
  year = {2018}
}

@inproceedings{mukherjee2024detecting,
  title={Detecting hidden communities by power iterations with connections to vanilla spectral algorithms},
  author={Mukherjee, Chandra Sekhar and Zhang, Jiapeng},
  booktitle={Proceedings of the 2024 Annual ACM-SIAM Symposium on Discrete Algorithms (SODA)},
  pages={846--879},
  year={2024},
  organization={SIAM}
}

@article{ailon2015iterative,
  title={Iterative and active graph clustering using trace norm minimization without cluster size constraints.},
  author={Ailon, Nir and Chen, Yudong and Xu, Huan},
  journal={Journal of Machine Learning Research},
  volume={16},
  pages={455--490},
  year={2015}
}

@article{cerqueira2020estimation,
  title={Estimation of the number of communities in the stochastic block model},
  author={Cerqueira, Andressa and Leonardi, Florencia},
  journal={IEEE Transactions on Information Theory},
  volume={66},
  number={10},
  pages={6403--6412},
  year={2020},
  publisher={IEEE}
}


\appendix
\section{Proofs for Section~\ref{section:framework_algorithm}}

\subsection{Proof of Lemma~\ref{lemma:running_time_algo}}

\begin{proof}[Proof of Lemma~\ref{lemma:running_time_algo}]
 We compute $W_{ij}$ by computing the intersection between the neighborhoods of $i$ and $j$. If the neighborhoods are represented by hash sets, this intersection can be computed in $\bigO(\min\{d_i,d_j\})$. We bound $\min\{d_i,d_j\}\le d_i+d_j$. 
 For each edge adjacent to $i$, we have to compute an intersection, which results in a contribution of $d_i^2$. More explicitly, we write
    \[
    \sum_{ij\in E}d_i+d_j=\sum_{i\in[n]}d_i^2.
    \]
 The final step (computing the connected components of $E^*$) can be performed by a breadth-first search, which has time and space complexity $\bigO(n+|E|)$. We conclude that the time complexity is $\bigO(n+\sum_{i\in[n]}d_i^2)$.

 The space complexity follows from the fact that we only maintain the vertex neighborhoods and the edge sets $E$ and $E^*$. 
\end{proof}

\subsection{Proof of Theorem~\ref{thm:refinement-generalized}}
\label{appendix:proof_thm:refinement-generalized}

\begin{proof}[Proof of Theorem~\ref{thm:refinement-generalized}]
 Consider vertices $i,j$ with $i\nrel{T_n}j$ and let $s_i,s_j$ denote the sizes of their communities. We decompose $W_{ij}=W_{ij}^-+W_{ij}^+$, where $W_{ij}^-$ denotes the number of common neighbors outside the communities of $i$ and $j$, while $W_{ij}^+$ denotes the number of common neighbors that are in the union of $i$ and $j$'s communities. We write
    \begin{align*}
    \1 \{W_{ij}\ge2\}
    & \weq \1\{W_{ij}^+\ge2\}+\1\{W_{ij}^+=1\} \1\{W_{ij}^-\ge1\} + \1\{W_{ij}^+=0\} \1\{W_{ij}^-\ge2\} \\
    & \wle \1\{W_{ij}^+\ge2\} + \1\{W_{ij}^+=1\} \1\{W_{ij}^-\ge1\} + \1\{W_{ij}^-\ge2\}.
    \end{align*}
    When $i\nrel{T_n}j$, then $W_{ij}^+\sim \Binom(s_i+s_j-2,p_nq_n)$ and $W_{ij}^-\sim \Binom(n-s_i-s_j,q_n^2)$. Note that $W_{ij}^-$ is stochastically dominated by the random variable $X\sim\Binom(n,q_n^2)$. We obtain
    \begin{align}
        \cP{W_{ij}\ge2}{i\nrel{T_n}j}
        \wle &\ \cP{W_{ij}^+\ge2}{i\nrel{T_n}j}\nonumber\\
        &+\cP{W_{ij}^+=1}{i\nrel{T_n}j}\P(X\ge1)\nonumber\\
        &+\cP{W_{ij}^+=0}{i\nrel{T_n}j}\P(X\ge2)\nonumber\\
        \weq &\ \bigO\left((s_i+s_j)^2p_n^2q_n^2\right)+\bigO\left(n(s_i+s_j)p_nq_n^3\right)+\bigO(n^2q_n^4), \label{eq:general-bound}
    \end{align}
    where in the last step, we used that for $Y\sim\Binom(k,p)$, it holds that $\P(Y=0)\le 1$, $\P(Y=1)\le kp$ and $\P(Y\ge2)\le k^2p^2$.
    We bound $(s_i+s_j)^2 \le 2(s_i^2+s_j^2)$. 
    This tells us that there is some $c>0$ so that 
    \[
     \cP{W_{ij}\ge2}{i\nrel{T_n}j} \wle c \cdot (p_n^2 q_n^2 ( s_i^2 + s_j^2 )+n(s_i+s_j)p_nq_n^3+n^2q_n^4).
    \]
    We rewrite this as
    \begin{align*}
        &\cE{\1\{W_{ij}\ge2\}}{i\nrel{T_n}j,|T_n(i)|,|T_n(j)|}\\
        \wle& c \cdot \left(p_n^2 q_n^2 ( |T_n(i)|^2 + |T_n(j)|^2 )+n(|T_n(i)|+|T_n(j)|)p_nq_n^3+n^2q_n^4\right).
    \end{align*}
    Next, to get rid of the conditioning on $i\nrel{T_n}j$, we multiply with $\1\{i\nrel{T_n}j\}$ and write
    \begin{align}\label{eq:wedges-sizes-bound}
    &\cE{\1\{i\nrel{T_n}j\}\1\{W_{ij}\ge2\}}{|T_n(i)|,|T_n(j)|}\nonumber\\
        \wle& c \cdot\1\{i\nrel{T_n}j\}\cdot  \left(p_n^2 q_n^2 ( |T_n(i)|^2 + |T_n(j)|^2 )+n(|T_n(i)|+|T_n(j)|)p_nq_n^3+n^2q_n^4\right)\nonumber\\
        \wle& c \cdot  \left(p_n^2 q_n^2 ( |T_n(i)|^2 + |T_n(j)|^2 )+n(|T_n(i)|+|T_n(j)|)p_nq_n^3+n^2q_n^4\right).
    \end{align}
    To prove the theorem, we use $\P(C_n\preceq T_n)=1-\P(\exists_{ij}:i\rel{G_n^*}j\wedge i\nrel{T_n}j)$ and bound, by Markov's inequality, 
    \begin{align}\label{eq:refinement-Markov}
        \P(\exists_{ij}:i\rel{G_n^*}j\wedge i\nrel{T_n}j)
        & \wle \E\left[ \#\{ij\ :\ i\rel{G_n^*}j\wedge i\nrel{T_n}j\} \right] \nonumber\\
        & \weq \sum_{ij} \P(i\nrel{T_n}j) \cP{i\rel{G_n^*}j}{i\nrel{T_n}j} \nonumber\\
        & \wle \sum_{i\in[n],j\in [n]} \P(i\nrel{T_n}j) \cP{i\rel{G_n^*}j}{i\nrel{T_n}j}.
    \end{align}
    We rewrite 
    \begin{align}\label{eq:diamond-rewrite}
        \P(i\nrel{T_n}j)\cP{i\rel{G_n^*}j}{i\nrel{T_n}j}
        & \weq q_n \P(i\nrel{T_n}j) \cP{W_{ij}\ge2}{i\nrel{T_n}j} \\
        & \weq q_n \E\left[\cE{\1\{i\nrel{T_n}j\}\1\{W_{ij}\ge2\}}{|T_n(i)|,|T_n(j)|}\right].
    \end{align}
    
    where we used \eqref{eq:wedges-sizes-bound} and the tower rule.
    Substituting \eqref{eq:diamond-rewrite} into \eqref{eq:refinement-Markov}, and applying the bound from \eqref{eq:wedges-sizes-bound} leads to
    \begin{align}\label{eq:refinement-final-bound}
        \P(\exists_{ij}:i\rel{G_n^*}j\wedge i\nrel{T_n}j)\le c\cdot n^2\cdot q_n\left(p_n^2q_n^2\cdot 2\E[S_n^2]+n\cdot2\E[S_n]p_nq_n^3+n^2q_n^4\right).
    \end{align}
    We need this quantity to vanish. For the last term to vanish, we require $n^4q_n^5\to0$, i.e., $q_n\ll n^{-4/5}$. For the first term to vanish, we need
    \[
        n^2q_n^3p_n^2\E[S_n^2]\to0.
    \]
    If these two terms vanish, then the other term must also vanish. To see this, note that the geometric mean of $n^4q_n^5$ and $n^2q_n^3p_n^2\E[S_n^2]$ is upper-bounds the middle term by Jensen's inequality:
    \[
        \sqrt{n^2q_n^3p_n^2\E[S_n^2]\cdot n^4q_n^5}=n^3q_n^4p_n\sqrt{\E[S_n^2]}\ge n^3q_n^4p_n\E[S_n].
    \]
    Hence, if the first and third term of \eqref{eq:refinement-final-bound} vanish, it implies that $\P(\exists_{ij}:i\rel{G_n^*}j\wedge i\nrel{T_n}j)\to0$, so that
    \[
    \P(C_n\preceq T_n)\to1,
    \]
    which completes the proof.
\end{proof}

\subsection{Proof of Lemma~\ref{lem:correlation}}

\begin{proof}[Proof of Lemma~\ref{lem:correlation}]
Recall the definition of the correlation coefficient between two partitions given in~\eqref{eq:correlation}. 
Whenever $C\preceq T$, we have $m_{CT}=m_C\le m_T$, so that \eqref{eq:correlation} becomes
\begin{align*}
    \rho(C,T)
    &=\frac{m_C\cdot (N-m_T)}{\sqrt{m_C\cdot(N-m_C)\cdot m_T\cdot(N-m_T)}}\\
    &=\sqrt{\frac{m_C}{m_T}\cdot \frac{N-m_T}{N-m_C}}.
\end{align*}
    By \cref{assumption:concentration_mT}, $\E[m_T]\ll N$. Markov's inequality tells us that $\P(m_T>\sqrt{N\E[m_T]})\le \sqrt{\E[m_T]/N}\to0$.
    Conditioned on the events $C\preceq T$ and $m_T\le \sqrt{N\E[m_T]}\ll N$ and calculate
    \begin{align*}
        \rho(C,T)
        &=\frac{N\cdot m_{CT}-m_Cm_T}{\sqrt{m_C\cdot(N-m_C)\cdot m_T\cdot (N-m_T)}}\\
        &=\frac{m_C\cdot(N-m_C)}{\sqrt{m_C\cdot(N-m_C)\cdot m_T\cdot (N-m_T)}}\\
        &=\sqrt{\frac{m_C}{m_T}}\cdot \sqrt{\frac{N-m_C}{N-m_T}}\sim\sqrt{\frac{m_C}{m_T}},
    \end{align*}
    where the last step follows from the fact that $m_C\le m_T\ll N$. Therefore, for any $\varepsilon>0$, it holds that
    \[
    \cP{\left|\rho(C,T)-\sqrt{\frac{m_C}{m_T}} \right|>\varepsilon}{C\preceq T,m_T\le \sqrt{N\E[m_T]}}=o(1).
    \]
    Since the events that we condition on occur with high probability, we conclude
    \begin{align*}
    & \P\left(\left|\rho(C,T)-\sqrt{\frac{m_C}{m_T}} \right|>\varepsilon\right)\\
    & =\cP{\left|\rho(C,T)-\sqrt{\frac{m_C}{m_T}} \right|>\varepsilon}{C\preceq T,m_T\le \sqrt{N\E[m_T]}}(1-o(1)) + o(1)\rightarrow 0.
    \end{align*}
\end{proof}

\section{Proofs for General Partitions (Section~\ref{section:arbitrary_partitions})}
\label{appendix:proof-arbitrary-partitions}

\subsection{Proofs of Theorem~\ref{thm:exact}}

\begin{proof}
[Proof of Theorem~\ref{thm:exact}]
    We prove that w.h.p., all intra-community vertex pairs have at least two common neighbors inside their community, so that
    all edges in the planted community are contained in $G^*$. This additionally implies that the community is connected and has diameter at most $2$. \cref{thm:refinement-generalized} additionally guarantees that none of the edges outside the planted community are contained in $G^*$. Together, this guarantees exact recovery.

    We prove that w.h.p., there is no pair $ij$ of vertices in the community with $W_{ij}^+<2$.
    Conditioned on $T$, we use the Markov inequality to bound the probability that there exists such a pair by $\sum_{i\rel{T}j}\P(W_{ij}^+<2)$.
    Suppose the community of $i$ and $j$ has size $s$. Then, the subgraph induced by their community corresponds to an ER graph with $s$ vertices and connection probability $p_n$.
    Since $W_{ij}^+\sim\text{Bin}(s-2,p_n^2)$, we can write
    \begin{align*}
    \P(W_{ij}^+<2)
    &=(1-p_n^2)^{s-2}+(s-2)p_n^2(1-p_n^2)^{s-3}\\
    &=(1-p_n^2+(s-2)p_n^2)(1-p_n^2)^{s-3}\\
    &=(1+(s-3)p_n^2)(1-p_n^2)^{s-3}\\
    &\le (1+sp_n^2)(1-p_n^2)^{s-3}.
    \end{align*}
    Taking the derivative of its log, we see that this quantity is decreasing for
    \begin{align*}
    &\frac{p_n^2}{1+sp_n^2}+\log(1-p_n^2)\le0\\
    \Rightarrow &s\ge \frac{1}{-\log(1-p_n^2)}-p_n^{-2}=\frac{p_n^2+\log(1-p_n^2)}{-p_n^2\log(1-p_n^2)}=\frac{1}{2}+\bigO(p_n^2).
    \end{align*}
    This allows us to upper-bound this probability by substituting $s=s_n^{(\min)}$.
    This yields
    \begin{equation}\label{eq:exact-final-bound}
    \P(\exists i,j:\ i\rel{T}j\wedge W_{ij}^+<2) \wle \E[m_T]\cdot(1+s_n^{(\min)}p_n^2)(1-p_n^2)^{s_n^{(\min)}-3}.
    \end{equation}
    We show that the above requires $s_n^{(\min)}p_n^2\to\infty$. 
    Suppose $s_n^{(\min)}p_n^2=\bigO(1)$, then
    \[
    (1-p_n^2)^{s_n^{(\min)}}= e^{-p_n^2s_n^{(\min)}+\bigO(p_n^4s_n^{(\min)})}=\Omega(1),
    \]
    since $p_n^4s_n^{(\min)}=\left(p_n^2s_n^{(\min)}\right)^2/s_n^{(\min)}\to0$.
    This implies that the right-hand-side of~\eqref{eq:exact-final-bound} is $\Omega(\E[m_T])\to\infty$, which indeed forms a contradiction.
    Now, using $s_n^{(\min)}p_n^2\to\infty$, we rewrite the right-hand-side of~\eqref{eq:exact-final-bound} to
    \[
    \E[m_T]\cdot(1+s_n^{(\min)}p_n^2)(1-p_n^2)^{s_n^{(\min)}-3}=\bigO\left(\E[m_T]\cdot s_n^{(\min)}p_n^2e^{-p_n^2s_n^{(\min)}}\right).
    \]
    This tells us that we need
    \[
    \E[m_T]\cdot p_n^2s_n^{(\min)}e^{-p_n^2s_n^{(\min)}}=o(1).
    \]
    If we take $p_n$ as given in the theorem, then
    \[
    p_n^2s_n^{(\min)}=\log\E[m_T]+\log\log \E[m_T]+\omega,
    \]
    for some sequence $\omega\to\infty$. Substituting this yields
    \begin{align*}
        \E[m_T]\cdot p_n^2s_n^{(\min)}e^{-p_n^2s_n^{(\min)}}
        &=\frac{\E[m_T]\cdot \left(\log\E[m_T]+\log\log \E[m_T]+\omega\right)}{\E[m_T]\cdot\log\E[m_T]\cdot e^\omega}\\
        &=e^{-\omega}\cdot\left(1+\frac{\log\log\E[m_T]+\omega}{\log\E[m_T]}\right)\to0,
    \end{align*}
    as required.
\end{proof}

\subsection{Proof of Theorem~\ref{thm:almost_exact}}

\begin{proof}[Proof of Theorem~\ref{thm:almost_exact}]
 By \cref{thm:refinement-generalized}, we have $\P(C_n\preceq T_n)\rightarrow 1$ . Moreover, \cref{lem:correlation} implies $\rho(C_n,T_n)^2-\frac{m_{C_n}}{\E[m_{T_n}]}\stackrel{\P}{\rightarrow}0$. 
    We will prove that $\tfrac{\E[m_C]}{\E[m_T]}\rightarrow1$, so that $\rho(C_n,T_n)^2\stackrel{\P}{\rightarrow}1$. 
    Let us define the random variable $S_n'=|C_n(I_n)|$. Noticing that $m_C=\frac{1}{2}\sum_{i\in[n]}(|C(i)|-1)$, we have 
    \[
    \frac{\E[m_C]}{\E[m_T]} \weq \frac{\frac{n}{2}\E[S_n'-1]}{\frac{n}{2}\E[S_n-1]} \weq \frac{\E[S_n']-1}{\E[S_n]-1}.
    \]
    We have
    \begin{equation}\label{eq:S_n-biased}
    \frac{S_n'-1}{\E[S_n-1]} \wge \frac{(S_n-1)\cdot\mathbb1\{S_n=S_n'\}}{\E[S_n-1]}.
    \end{equation}
    We introduce the function $\phi(s)$ given by
    \[
    \phi(s)=\cP{S_n=S_n'}{S_n=s}.
    \]
    Moreover, we denote by $S_n^*$ the distribution of $S_n$ biased by $S_n-1$. That is,
    \[
    \P(S_n^*=s)=\frac{(s-1)\P(S_n=s)}{\E[S_n-1]}.
    \]
    This allows us to write the expectation of the right-hand-side of~\eqref{eq:S_n-biased} as
    \[
    \E\left[\frac{(S_n-1)\cdot\mathbb1\{S_n=S_n'\}}{\E[S_n-1]}\right] \weq \E[\phi(S_n^*)]
    \]
    From \cref{assumption:minimum_size_soft}, $S_n^*\ge s_n^{(\min)}$ holds with high probability.
    The remainder of the proof is similar to the proof of \cref{thm:exact}: if $S_n=s_n$, then the probability that there exists a vertex pair in the community of $I_n$ that does not have two internal common neighbors is 
    \[
    1-\phi(s_n) \wle \cP{\exists ij\in{T_n(I_n)\choose 2} \colon W_{ij}^+<2}{S_n=s_n} \weq \bigO(s_n^3p_n^2e^{-s_np_n^2}),
    \]
    where 
    ${T_n(I_n)\choose 2}$ denotes the set of vertex pairs belonging to $T_n(I_n) \times T_n(I_n)$. 
    Because the quantity inside the $\bigO$ in the previous expression is decreasing in $s_n$, we can use the bound $S_n\ge s_n^{(\min)}$ to write
    \[
    \phi(s_n^{(\min)}) \weq 1-\bigO((s_n^{(\min)})^3p_n^2e^{-s_n^{(\min)}p_n^2}).
    \]
    We need this error term to vanish. 
    Using a similar derivation as in the proof of~\cref{thm:exact}, it is sufficient to have
    \begin{align*}
    s_n^{(\min)}p_n^2& \weq \log((s_n^{(\min)})^2)+\log\log((s_n^{(\min)})^2)+\omega(1)\\
    & \weq 2\log s_n^{(\min)}+\log\log s_n^{(\min)}+\omega(1),
    \end{align*}
    which can indeed be rearranged to the expression presented in the theorem.
    We conclude that
    \[
    \E[\rho(C_n,T_n)^2] \wge \E[\phi(S_n^*)]\to1,
    \]
    so that $\rho(C_n,T_n)\stackrel\P\to1$ as required.
\end{proof}

\subsection{Proof of Lemma~\ref{lem:uniform-integrability}}
\label{subsection:proof_lemma_uniform-integrability}
\begin{proof}[Proof of~\cref{lem:uniform-integrability}] 
    First, we prove that $\mathbb E[S_n]$ is bounded. If not, then we can take $s_n=\frac12\max_{n'\le n}\mathbb E[S_{n'}]\to\infty$ and take the subsequence $n_k$ for which $\mathbb E[S_{n}]=\max_{n'\le n}\mathbb E[S_{n'}]$, and write
$$
\varepsilon_{n_k}(s_{n_k})=
\frac{\mathbb E[(S_{n_k}-1)\mathbb 1\{S_{n_k}< s_{n_k}\}]}{\mathbb E[S_{n_k}-1]}<\frac{s_{n_k}-1}{\E[S_{n_k}-1]}\to\frac12,
$$
which contradicts the claim that $\varepsilon_n(s_n)\to1$. We conclude that $\mathbb E[S_n]$ is bounded.
The lemma's assumption is equivalent to
$$
1-\varepsilon_n(s_n)=\frac{\mathbb E[(S_n-1)\mathbb 1\{S_n\ge s_n\}]}{\mathbb E[S_n-1]}\to0.
$$
A sequence of random variables is uniformly integrable if for every $\varepsilon>0$, there exists some $s<\infty$ so that for all $n$,
\[
\E\left[S_n\cdot\mathbb 1\{S_n\ge s\}\right]\le\varepsilon.
\]
Again, we prove this claim via contradiction. Suppose there is some $\varepsilon>0$, so that for all $s<\infty$, there is some $n(s,\varepsilon)$ for which $\E\left[S_n\cdot\mathbb 1\{S_n\ge s\}\right]>\varepsilon$. Then, for any sequence $s_k\to\infty$, we can construct the subsequence $n_k=n(s_k,\varepsilon)$, along which
\[
1-\varepsilon_{n_k}(s_k)=\frac{\mathbb E[S_{n_k}\cdot\mathbb 1\{S_{n_k}\ge s_k\}]}{\mathbb E[S_{n_k}]}>\frac{\varepsilon}{\mathbb E[S_{n_k}]}\ge \frac{\varepsilon}{\sup_n\mathbb E[S_{n}]}>0,
\]
again contradicting the assumption $\varepsilon_{n_k}(s_k)\to1$, which proves that $S_n$ must indeed be uniformly integrable. 

Uniform integrability implies that $S_n$ is a tight sequence of random variables. By Prokhorov's theorem, every tight sequence of random variables has a convergent subsequence $S_{n_\ell}$. Uniform integrability implies that the mean of this subsequence must also converge to the mean of its limit, which completes the proof.
\end{proof}

\subsection{Proof of Theorem~\ref{thm:weak}}

\subsubsection{Two Technical Lemmas}
Let $G'_n$ denote the graph of intra-community edges of $G_n$. That is $i\rel{G'}j$ if and only if $i\rel{G}j$ and $i\rel{T}j$. Then we define $C_n'=\cD(G_n')$ as the partition that results from applying our algorithm to $G'_n$. Now, since every edge of $G_n'$ is contained in $G_n$, every edge of $(G_n')^*$ is also contained in $G^*_n$. This implies that  $C_n'\preceq C_n$. We denote the number of intra-cluster pairs of $C_n'$ by $m_{C'}$.

It would obviously be much easier to recover $T_n$ from $G_n'$ than from $G_n$, since every edge of $G_n'$ is guaranteed to connect two vertices of the same community. Because of this, we would expect $\cD$ to perform better on $G_n'$ than $G_n$. Counterintuitively, the next lemma proves that the opposite is true: with high probability, the $C_n'$ provides a \emph{lower bound} on the performance of $C_n$. 

\begin{lemma}\label{lem:further-refinement}
    If \cref{assumption:size-sparsity} holds, then
    \[
    \rho(C_n,T_n) \wge \rho(C_n',T_n) 
    \]
    with high probability. If additionally \cref{assumption:concentration_mT} holds, then also
    \[
    \rho(C_n',T_n)-\sqrt{\frac{m_{C'}}{\E[m_T]}}\wprto0.
    \]
\end{lemma}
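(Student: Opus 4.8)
\textbf{Proof plan for Lemma~\ref{lem:further-refinement}.}

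The second assertion is a direct corollary of \cref{lem:correlation}. Indeed, $C_n'=\cC(G_n')$ is obtained by running Algorithm~\ref{alg:commonNeighborsPartitioning} on the graph $G_n'$ of intra-community edges. Since every edge of $G_n'$ connects two vertices of the same community, we automatically have $C_n'\preceq T_n$, and in fact the proof of \cref{lem:correlation} only used the refinement property together with $m_T\le n^{3/2}$ w.h.p.\ (a consequence of Assumption~\ref{assumption:size-sparsity}). So the argument of \cref{lem:correlation} applied to the pair $(C_n',T_n)$ gives $\rho(C_n',T_n)-\sqrt{m_{C'}/m_T}\wprto 0$. Thus the real content of the lemma is the first inequality, $\rho(C_n,T_n)\ge\rho(C_n',T_n)$ w.h.p.

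The plan for the first inequality is to reduce both correlations to comparable square-root expressions and then compare the numerators. First, on the high-probability event from \cref{thm:refinement-generalized}, we have $C_n\preceq T_n$, and as observed in the paragraph before the lemma, $(G_n')^*\subseteq G_n^*$ always, hence $C_n'\preceq C_n$. Combining, w.h.p.\ we have the chain $C_n'\preceq C_n\preceq T_n$. Now apply the computation at the start of the proof of \cref{lem:correlation}: whenever a partition $C$ refines $T$,
\[
\rho(C,T)=\sqrt{\frac{m_C}{m_T}\cdot\frac{N-m_T}{N-m_C}}.
\]
Therefore, on this event,
\[
\frac{\rho(C_n,T_n)}{\rho(C_n',T_n)}=\sqrt{\frac{m_{C}}{m_{C'}}\cdot\frac{N-m_{C'}}{N-m_{C}}}.
\]
Since $C_n'\preceq C_n$ implies $m_{C'}\le m_{C}$, and since $x\mapsto x/(N-x)$ is increasing on $[0,N)$, both factors under the square root are $\ge 1$, so the ratio is $\ge 1$. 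This gives $\rho(C_n,T_n)\ge\rho(C_n',T_n)$ on the intersection of the two high-probability events (refinement of $T_n$ by $C_n$, and $m_T<N$ which is ensured by Assumption~\ref{assumption:size-sparsity} giving $\E[m_T]=o(n^2)$ plus Markov, so $m_T\le n^{3/2}<N$ w.h.p.), and hence w.h.p.

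The only subtlety — and the step I expect to need the most care — is handling the exceptional event where $C_n\not\preceq T_n$. There the clean formula for $\rho(C_n,T_n)$ fails and in principle $\rho(C_n,T_n)$ could dip below $\rho(C_n',T_n)$. But \cref{thm:refinement-generalized} tells us this event has probability $o(1)$, and since we only claim the inequality holds \emph{with high probability}, this is harmless: we simply intersect with the good event. I would also double-check the degenerate boundary cases ($m_{C'}=0$, i.e.\ $C_n'$ all singletons, or $m_C=0$) — when $m_{C'}=0$ the quantity $\sqrt{m_{C'}/m_T}=0$ and $\rho(C_n',T_n)$ should be interpreted as $0$ (or is $0$ by the constant-baseline normalization), so the inequality $\rho(C_n,T_n)\ge 0$ is immediate since $C_n$ refines $T_n$; and $m_C\ge m_{C'}$ makes $m_C=0$ imply $m_{C'}=0$. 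With these cases dispatched, the two displayed claims follow.
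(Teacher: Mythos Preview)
Your proposal is correct and follows essentially the same approach as the paper: establish the chain $C_n'\preceq C_n\preceq T_n$ w.h.p.\ (using \cref{thm:refinement-generalized} for the second refinement and the inclusion $(G_n')^*\subseteq G_n^*$ for the first), then invoke monotonicity of $\rho$ under refinement for the first inequality, and apply the argument of \cref{lem:correlation} with $q_n=0$ for the second convergence. The only cosmetic difference is that the paper cites the monotonicity property from \cite{gosgens2021systematic}, whereas you re-derive it directly from the closed-form expression $\rho(C,T)=\sqrt{\tfrac{m_C}{m_T}\cdot\tfrac{N-m_T}{N-m_C}}$ valid for $C\preceq T$; your route is slightly more self-contained but otherwise identical.
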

\begin{proof}
    Since every edge of $G_n'$ is present in $G_n$, every edge of $(G_n')^*$ is also present in $G_n^*$, which implies $C_n'\preceq C_n$. Hence, $C_n'\preceq C_n\preceq T_n$ holds with high probability.
    In \cite{gosgens2021systematic}, it was proven that the correlation coefficient $\rho$ is monotone with respect to merging communities. That is, $C_n'\preceq C_n\preceq T_n$ implies $\rho(C_n,T_n)\ge\rho(C_n',T_n)$.

    Finally, note that $G_n' \sim \PPM(T_n,p,0)$ (i.e., by setting $q_n=0$). Therefore, applying \cref{lem:correlation} to $G'_n$ yields the last claim.
\end{proof}

The fact that we can lower-bound the performance of $C_n$ by the performance of $C_n'$ is convenient, since $C_n'$ is much easier to analyze.

\begin{lemma}\label{lem:mC-concentration}
    If Assumptions~\ref{assumption:concentration_mT} and~\ref{assumption:convergent-s} hold, then
    \[
        \frac{m_{C'}}{\E[m_T]}-\frac{\E[m_{C'}]}{\E[m_T]}\wprto0.
    \]
\end{lemma}
\begin{proof}
    We write the number of intra-community pairs in the $a$-th community of $T_n$ as
    \[
        m_T^{(a)}={|T_n^{(a)}|\choose 2},
    \]
    so that $m_T=\sum_{a}m_T^{(a)}$.
    We divide $T_n$ into small and large communities, where we set the threshold at
    \[
    s_n=\E[m_T]^{1/3}.
    \]
    We define the set of small communities as
    \[
    A_<=\{a\ :\ |T_n^{(a)}|<s_n\},
    \]
    and define $A_{\ge}$ similarly as the set of communities of size at least $s_n$.
    Let
    \[
    m_T^<=\sum_{a\in A_<} m_T^{(a)},
    \]
    denote the sum of intra-community pairs in these smaller communities and define let $m_T^\ge=m_T-m_T^<$.
    We write
    \begin{align*}
        \E\left[m_T^<\right]
        & \weq \E\left[\frac12\sum_{i\in [n]} (|T_n(i)|-1)\cdot \1\{|T_n(i)|< s_n\} \right]\\
        & \weq \frac{n}{2}\cE{(S_n-1)\cdot\1\{S_n< s_n\}}{S_n>1}\cdot \P(S_n>1)\\
        & \wsim \frac n2\E[S-1]\cdot \P(S_n>1), 
    \end{align*}
    where the last line follows from $((S_n-1)\cdot\1\{S_n<s_n\}\ |\ S_n>1)\stackrel{d}{\to}S-1$ because $s_n\to\infty$ (Assumption~\ref{assumption:convergent-s}).
    Similarly,
    \begin{align*}
        \E[m_T]& \weq \frac n2\cE{S_n-1}{S_n>1}\cdot\P(S_n>1)\\
        & \wsim \frac n2\E[S-1]\cdot \P(S_n>1),
    \end{align*}
    so that $\E[m_T]\sim \E[m_T^<]$. Therefore,
    \[
    \frac{\E[m_T^\ge]}{\E[m_T]} \weq \frac{\E[m_T]-\E[m_T^<]}{\E[m_T]}\to0.
    \]
    By Markov's inequality, it follows that
    \begin{equation}\label{eq:large-communities-Markov}
    \frac{m_T^\ge}{\E[m_T]}\wprto0.
    \end{equation}

    We now define the number of \emph{recovered} intra-community pairs of the $a$-th community of $T_n$ as
    \[
    m_{C'}^{(a)} \weq \#\left\{(i,j)\in {T_n^{(a)}\choose 2}\ :\ i\rel{C'}j\right\}.
    \]
    From this definition, we have $0\le m_{C'}^{(a)}\le m_T^{(a)}$ and $m_{C'} \weq \sum_am_{C'}^{(a)}$. Define also 
    \[
    m_{C'}^< \weq \sum_{a\in A_<}m_{C'}^{(a)},
    \]
    and $m_{C'}^\ge$. Firstly, we can bound $m_{C'}^\ge\le m_T^\ge$ and use \eqref{eq:large-communities-Markov} to write
    \[
    \frac{m_{C'}^\ge}{\E[m_T]}\wprto0.
    \]

    Secondly, the random variables $(m_{C'}^{(a)})_{a\in A_<}$ are independent when conditioned on the true partition $T_n$, because $m_{C'}^{(a)}$ only depends on the edges inside $T_n^{(a)}$.
    This allows us to apply Hoeffding's inequality to the conditional probability
    \begin{align*}
        \cP{|m_{C'}^<-\E[m_{C'}^<]|>\varepsilon\E[m_T]}{T_n}
        & \wle 2\exp \left( -\frac{\varepsilon^2\E[m_T]^2}{\sum_{a\in A_<} (m_T^{(a)})^2} \right),
    \end{align*}
    where we used $0\le m_{C'}^{(a)}\le m_T^{(a)}$ for each $a\in A_<$.
    In this bound, the quantity $\sum_{a\in A_<} (m_T^{(a)})^2$ is a function of the random variable~$T_n$.
    Note that the function $x\mapsto e^{-1/x}$ is increasing for $x>0$. We can bound
    \begin{eqnarray*}
    \sum_{a\in A_<} (m_T^{(a)})^2
    =&\sum_{a\in A_<} {|T_n^{(a)}|\choose 2}^2
    &\le s_n^2\sum_{a\in A_<} {|T_n^{(a)}|\choose 2}\\
    \le& s_n^2m_T
    &=m_T\E[m_T]^{2/3}.
    \end{eqnarray*}
    This allows us to bound
    \begin{align}
        \P(|m_{C'}^<-\E[m_{C'}^<]|>\varepsilon\E[m_T])
        & \weq \E\left[\cP{|m_{C'}^<-\E[m_{C'}^<]|>\varepsilon\E[m_T]}{T_n}\right]\nonumber\\
        & \wle 2 \, \E\left[ \exp\left( -\frac{\varepsilon^2\E[m_T]^2}{\sum_{a\in A_<} (m_T^{(a)})^2}\right) \right] \nonumber\\
        & \wle 2 \, \E\left[ \exp\left( -\varepsilon^2\E[m_T]^{1/3}\frac{\E[m_T]}{m_T}\right) \right]\nonumber\\
        &\to0,
    \end{align}
    where we used $m_T/\E[m_T]\wprto1$ and $\E[m_T]^{1/3}\to\infty$ from \cref{assumption:concentration_mT}. We have shown
    \[
        \frac{m_{C'}^<-\E[m_{C'}^<]}{\E[m_T]}\wprto0.
    \]
    Putting everything together, the conclude that
    \begin{align*}
        \frac{m_{C'}}{\E[m_T]}-\frac{\E[m_{C'}]}{\E[m_T]}
        & \weq \frac{m_{C'}^<-\E[m_{C'}^<]}{\E[m_T]}+\frac{m_{C'}^\ge }{\E[m_T]}-\frac{\E[m_{C'}^\ge]}{\E[m_T]}\\
        & \weq o_\P(1)+o_\P(1)-o(1).
    \end{align*}
\end{proof}

\subsubsection{Proof of Theorem~\ref{thm:weak}}

\begin{proof}[Proof of \cref{thm:weak}]

    By~\cref{lem:further-refinement}, we have 
    \[
    \rho(C_n',T_n)-\sqrt{\frac{m_{C'}}{\E[m_T]}}\wprto 0.
    \]
    Moreover, \cref{lem:mC-concentration} implies
    \[
    \rho(C_n',T_n)-\sqrt{\frac{\E[m_{C'}]}{\E[m_T]}}\wprto 0.
    \]
    Note that $\E[m_T]=\frac{n}{2}\E[S_n-1]=\frac n2\cE{S_n-1}{S_n>1}\cdot\P(S_n>1)$.
    Thus, we have 
    \begin{align}
        \frac{\E[m_{C'}]}{\E[m_T]}
        \weq & \frac{\sum_{i\in[n]}\cE{|C'_n(i)|-1}{S_n>1}\P(|T_n(i)|>1)}{n\cE{S_n-1}{S_n>1}\P(S_n>1)} \nonumber \\
        \weq & \frac{\cE{|C'_n(I_n)|-1}{S_n>1}}{\cE{S_n-1}{S_n>1}}, \label{eq:in_proof_fraction}
    \end{align}
    where the randomness is taken over $I_n$, which is a vertex uniformly distributed over $[n]$, and $S_n=|T_n(I_n)|$.
    
    By assumption, the denominator of~\eqref{eq:in_proof_fraction} converges to $\E[S-1]$. 
    To compute the numerator $\cE{|C'_n(I_n)|-1}{S_n>1}$, note that $|C'_n(I_n)|$ depends only on the edges in the community of $I_n$. The subgraph of $G$ induced by $T_n(I_n)$ is equal in distribution to an \ER random graph $H_n$ with $|T_n(I_n)|=S_n$ vertices and connection probability $p$, i.e., $H_n\sim\text{ER}(S_n,p)$.
    This leads to a coupling between $|C_n(I_n)|$ and $|C^{(H_n)}(1)|$, where $C^{(H_n)}=\cD(H_n)$. In this coupling, we replaced the arbitrary vertex $I_n$ by the first vertex of $H_n$. 
    Because $\cE{|C^{(H_n)}(1)|}{S_n}\le \E[S_n|S_n>1]\to\E[S]$ and $(S_n\ |\ S_n>1)\xrightarrow{d}S$, the dominated convergence theorem implies 
    \[
    \E[|C^{(H_n)}(1)|-1]\rightarrow \E[|C^{(H)}(1)|-1],
    \]
    for $H\sim\text{ER}(S,p)$. 
\end{proof}

\section{Proofs for Power-law Partitions (Section~\ref{section:power_law_partitions})}

\subsection{Size-bias Distribution}
The following two lemmas establish the convergence in distribution of $k \Pi_a$ and of $k \Pi^*$, respectively.

\begin{lemma}
 \label{lem:powerlaw-tail}
 For $\alpha\in [\tfrac1k,1)$, the tail probability of $\Pi_a$ defined in~\eqref{eq:def_Pi_a} is given by
 \begin{equation}\label{eq:pi-exact}
    \P(\Pi_a>\alpha) \weq \left(\frac{1-\alpha}{(k-1)\alpha}\right)^\tau\cdot \E\left[ \left(\frac{1}{k-1}\sum_{b\neq a}e^{X_b/\tau}\right)^{-\tau} \right].
\end{equation}
Moreover, when $k \gg 1$, we have 
 \[
  k\Pi_a \stackrel d \wto \text{Pareto} \left( 1-\tfrac1\tau,\tau \right).
 \]
\end{lemma}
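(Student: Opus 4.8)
The plan is to pass to the variables $Y_b := e^{X_b/\tau}$, for which $\Pi_a = Y_a\big/\sum_{b\in[k]}Y_b$. Since $X_b\sim\mathrm{Exp}(1)$, we have $\P(Y_b>y)=\P(X_b>\tau\log y)=y^{-\tau}$ for $y\ge1$, so the $Y_b$ are i.i.d.\ $\text{Pareto}(1,\tau)$; as $\tau>2$, they have finite mean $\E[Y_b]=\tau/(\tau-1)$. These two facts drive both halves of the lemma.

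For the exact identity, set $R:=\sum_{b\neq a}Y_b$, which is independent of $Y_a$. For any $\alpha\in[0,1)$ we have the equivalence $\Pi_a>\alpha\iff Y_a(1-\alpha)>\alpha R\iff Y_a>\tfrac{\alpha R}{1-\alpha}$. The key point is that each $Y_b\ge1$ forces $R\ge k-1$, and $\tfrac{\alpha(k-1)}{1-\alpha}\ge1$ exactly when $\alpha\ge\tfrac1k$; hence on the stated range of $\alpha$ the random threshold $\tfrac{\alpha R}{1-\alpha}$ always lies in $[1,\infty)$, where the Pareto tail $\P(Y_a>y)=y^{-\tau}$ is valid. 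Conditioning on $R$ and integrating therefore gives $\P(\Pi_a>\alpha)=\E\big[(\tfrac{\alpha R}{1-\alpha})^{-\tau}\big]=\big(\tfrac{1-\alpha}{\alpha}\big)^\tau\,\E[R^{-\tau}]$, and pulling the factor $(k-1)^{-\tau}$ out of $\E[R^{-\tau}]$ produces exactly \eqref{eq:pi-exact}.

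For the limiting distribution I would avoid any edge-case analysis by writing $k\Pi_a=Y_a\big/\big(\tfrac1k\sum_{b=1}^kY_b\big)$. The weak law of large numbers gives $\tfrac1k\sum_{b=1}^kY_b\stackrel{\P}{\rightarrow}\tfrac{\tau}{\tau-1}$, so the pair $\big(Y_a,\tfrac1k\sum_{b=1}^kY_b\big)$ converges in distribution to $\big(Y_a,\tfrac{\tau}{\tau-1}\big)$; since the second coordinate is a nonzero constant, the continuous mapping theorem (equivalently Slutsky's theorem) yields $k\Pi_a\stackrel{D}{\rightarrow}\tfrac{\tau-1}{\tau}Y_a=(1-\tfrac1\tau)Y_a$. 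Finally, scaling a $\text{Pareto}(1,\tau)$ variable by $c>0$ gives a $\text{Pareto}(c,\tau)$ variable, so $(1-\tfrac1\tau)Y_a\sim\text{Pareto}(1-\tfrac1\tau,\tau)$, as claimed. (One could instead read the limit off from \eqref{eq:pi-exact} with $\alpha=z/k$: the prefactor tends to $z^{-\tau}$ and, since $(\tfrac1{k-1}\sum_{b\ne a}Y_b)^{-\tau}\in[0,1]$ converges a.s.\ by the strong law, bounded convergence gives $\E[(\tfrac1{k-1}\sum_{b\ne a}Y_b)^{-\tau}]\to(1-\tfrac1\tau)^\tau$; but that argument only directly covers $z\ge1$ and needs a separate law-of-large-numbers step for $z<1$, which is why the Slutsky route is cleaner.)

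The argument is largely routine. The one spot needing care is the threshold case $\alpha=\tfrac1k$ in the exact formula, which is precisely where the bound $R\ge k-1$ is tight; and if one derives the limit from \eqref{eq:pi-exact} rather than from Slutsky, one must additionally justify interchanging limit and expectation and separately treat $z<1$. Neither is a genuine obstacle.
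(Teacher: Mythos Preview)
Your proof is correct. For the exact identity your argument is essentially the paper's, just phrased in terms of the Pareto variables $Y_b=e^{X_b/\tau}$ rather than the exponentials; the paper rewrites the event $\{\Pi_a>\alpha\}$ as $\{X_a>\tau\log(\tfrac{(k-1)\alpha}{1-\alpha})+\tau\log Y_k\}$ with $Y_k=\tfrac1{k-1}\sum_{b\neq a}e^{X_b/\tau}$ and uses $\P(X_a>t)=e^{-t}$, which is the same computation.

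For the convergence in distribution you take a genuinely different and cleaner route. The paper reads the limit off from the exact tail formula: for $z\ge1$ it plugs $\alpha=z/k$ into \eqref{eq:pi-exact} and uses bounded convergence on $\E[Y_k^{-\tau}]\to(1-\tfrac1\tau)^\tau$, but for $1-\tfrac1\tau<z\le1$ that formula is not available, so the paper derives a two-term expression for $\P(\Pi_a>\alpha)$ in that range and separately argues that the extra term vanishes via $Y_k\stackrel{\P}{\to}(1-\tfrac1\tau)^{-1}$. Your Slutsky argument, writing $k\Pi_a=Y_a\big/\big(\tfrac1k\sum_bY_b\big)$ and invoking the law of large numbers on the denominator, handles all $z$ at once and avoids the case split entirely. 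The trade-off is that the paper's route produces, as a by-product, the finer information about $\P(\Pi_a>\alpha)$ for $\alpha<\tfrac1k$ that is later reused (e.g.\ in the density computations of Lemma~\ref{lem:pi-size-bias}); your approach establishes the lemma itself more quickly but does not generate that additional formula.
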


\begin{proof}[Proof of Lemma~\ref{lem:powerlaw-tail}]
We rewrite
\begin{align}
 \P(\Pi_a>\alpha)
 & \weq \P\left(\frac{e^{X_a/\tau}}{e^{X_a/\tau}+\sum_{b\neq a}e^{X_b/\tau}}>\alpha\right) \nonumber \\
 & \weq \P\left(e^{X_a/\tau}>\frac{\alpha}{1-\alpha}\sum_{b\neq a}e^{X_b/\tau}\right) \nonumber \\
 & \weq \P\left(X_a>\tau\log\left(\frac{\alpha}{1-\alpha}\right)+\tau\log\left(\sum_{b\neq a}e^{X_b/\tau}\right)\right) \nonumber \\
 & \weq \P\left(X_a > \tau\log\left(\frac{(k-1)\alpha}{1-\alpha}\right) + \tau\log\left(\frac{1}{k-1}\sum_{b\neq a}e^{X_b/\tau}\right)\right) \label{eq:in_proof_expo}. 
\end{align}
Note that $e^{X/\tau}\ge1$, so that the second log term is nonnegative. However, $\log\left(\frac{(k-1)\alpha}{1-\alpha}\right)$ is negative for $\alpha<\tfrac1k$. Hence, for $\alpha\ge\tfrac1k$, we have
 \[
 \P(\Pi_a>\alpha) \weq \left(\frac{1-\alpha}{(k-1)\alpha}\right)^\tau\cdot \E\left[ \left(\frac{1}{k-1}\sum_{b\neq a}e^{X_b/\tau}\right)^{-\tau} \right],
 \]
which proves~\eqref{eq:pi-exact}. For $\alpha<\tfrac1k$, we condition on the value of $Y_k=\tfrac{1}{k-1}\sum_{b\neq a}e^{X_b/\tau}$ and write
\begin{align*}
 \cP{\Pi_a>\alpha}{Y_k}
 & \weq \min\left\{1,\left(\frac{1-\alpha}{(k-1)\alpha}\right)^\tau\cdot Y_k^{-\tau} \right\}.
\end{align*}
Taking the expectation with respect to $Y_k$, we obtain
\begin{align}
 \P(\Pi_a>\alpha)
 & \weq \P\left(Y_k<\frac{1-\alpha}{(k-1)\alpha}\right)+\E\left[\left(\frac{1-\alpha}{Y_k(k-1)\alpha}\right)^\tau\cdot\1\left\{Y_k\ge\frac{1-\alpha}{(k-1)\alpha}\right\}\right]\nonumber\\
 & \weq \left(\frac{1-\alpha}{(k-1)\alpha}\right)^\tau\cdot\E[Y_k^{-\tau}]+\E\left[\left(1-\left(\frac{1-\alpha}{Y_k(k-1)\alpha}\right)^\tau\right)\cdot\1\left\{Y_k<\frac{1-\alpha}{(k-1)\alpha}\right\}\right]\label{eq:tail-annoying}.
\end{align}
By the weak law of large numbers, we have
\[
 Y_k \weq \frac{1}{k-1}\sum_{b\neq a}e^{X_b/\tau} \wprto \E\left[ e^{X_b/\tau} \right] \weq \frac{1}{1- \frac{1}{\tau} }.
\]
In addition, $f(x)=x^{-\tau}$ is bounded for $x\ge1$, so that the above convergence implies
\[
 \E\left[ Y_k^{-\tau} \right] \wto \left(1-\frac1\tau\right)^\tau.
\]
After substituting $\alpha=z/k$ for $z\ge1,$ this already tells us that
\[
 \left(\frac{1-z/k}{\frac{k-1}{k}z}\right)^\tau\cdot\E[Y_k^{-\tau}] \wto \left(\frac{1-\tfrac1\tau}{z}\right)^\tau.
\]
To complete the proof, we need to show that for $1-\tfrac1\tau<z\le 1$, the second term in~\eqref{eq:tail-annoying} vanishes. Note that $1-\tfrac1\tau<z\le 1$ implies
\[
 \frac{1-z/k}{(k-1)z/k} \weq z^{-1}+\frac{z^{-1}-1}{k-1} \wto z^{-1}<(1-\tfrac1\tau)^{-1},
\]
so that
\[
 \1\left\{Y_k<\frac{1-z/k}{(k-1)z/k}\right\} \wprto 0,
\]
because $Y_k\wprto(1-\tfrac1\tau)^{-1}>z^{-1}$. This tells us that the second term in~\eqref{eq:tail-annoying} vanishes for $\alpha=z/k$, which completes the proof.
\end{proof}

\begin{lemma}\label{lem:pi-size-bias}
    Let $\Pi^*$ be the proportion of the community of a random vertex~$I_n$. Then
    \[
    k\Pi^*\stackrel d\to\text{Pareto}\left( 1-\tfrac1\tau,\tau-1 \right).
    \]
    Moreover, for $r>\tau-1$ and $\beta\in[0,1]$, we have 
    \[
    \E[(\Pi^*)^r\cdot\1\{\Pi^*<k^{\beta-1}\}] \weq \bigO(k^{(r+1-\tau)\beta-r}),
    \]
    while for $r<\tau-1$, we have 
    \[
    \E[(\Pi^*)^r] \wsim k^{-r}\frac{(\tau-1)^{1+r}}{\tau^r(\tau-1-r)}.
    \]
\end{lemma}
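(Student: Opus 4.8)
The plan is to leverage Lemma~\ref{lem:powerlaw-tail}, which already gives $k\Pi_a\stackrel{D}{\to}\mathrm{Pareto}(1-\tfrac1\tau,\tau)$, and to translate the statements about $\Pi_a$ into statements about the size-biased quantity $\Pi^*$. First I would record the size-bias relation: for any bounded measurable $g$,
\[
\E[g(\Pi^*)] \weq \E\!\left[\sum_{a\in[k]}\Pi_a\, g(\Pi_a)\right] \weq k\,\E[\Pi_a\, g(\Pi_a)],
\]
the last equality by exchangeability of the $\Pi_a$'s. For the tail, I would apply this with $g(x)=\1\{x>\alpha\}$ and then integrate by parts / use Fubini to write $\P(\Pi^*>\alpha) = k\,\E[\Pi_a\1\{\Pi_a>\alpha\}] = k\big(\alpha\P(\Pi_a>\alpha) + \int_\alpha^1\P(\Pi_a>t)\,dt\big)$. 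Plugging in $\alpha=z/k$ and the exact tail formula \eqref{eq:pi-exact} for $\Pi_a$ (valid since $z/k\ge 1/k$ once $z\ge1$), together with the convergence $\E[Y_k^{-\tau}]\to(1-\tfrac1\tau)^\tau$ established inside the proof of Lemma~\ref{lem:powerlaw-tail}, I would obtain $\P(k\Pi^*>z)\to (1-\tfrac1\tau)^{\tau-1} z^{-(\tau-1)}$ for $z\ge 1$. For $1-\tfrac1\tau<z<1$ one argues as in Lemma~\ref{lem:powerlaw-tail} that the correction terms vanish; for $z\le 1-\tfrac1\tau$ the limit tail is $1$. This identifies the limit as $\mathrm{Pareto}(1-\tfrac1\tau,\tau-1)$.

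For the moment bounds, I would again use the size-bias identity with $g(x)=x^r\1\{x<k^{\beta-1}\}$, giving
\[
\E[(\Pi^*)^r\1\{\Pi^*<k^{\beta-1}\}] \weq k\,\E[\Pi_a^{\,r+1}\1\{\Pi_a<k^{\beta-1}\}].
\]
Now $\E[\Pi_a^{r+1}\1\{\Pi_a<k^{\beta-1}\}] = \int_0^{k^{\beta-1}}(r+1)t^r\P(\Pi_a>t)\,dt$ up to a boundary term of order $k^{(\beta-1)(r+1)}\P(\Pi_a>k^{\beta-1})$. Using that $\P(\Pi_a>t)=\Theta\big((kt)^{-\tau}\big)$ uniformly for $t\gtrsim 1/k$ (which follows from \eqref{eq:pi-exact} and uniform bounds on $\E[Y_k^{-\tau}]$) and $\P(\Pi_a>t)\le 1$ for smaller $t$, the integral $\int_{1/k}^{k^{\beta-1}} t^{r-\tau}k^{-\tau}\,dt$ is, since $r>\tau-1$ makes the exponent $r-\tau>-1$, dominated by its upper endpoint and evaluates to $\Theta\big(k^{-\tau}\,k^{(\beta-1)(r-\tau+1)}\big)=\Theta\big(k^{(r+1-\tau)\beta - (r+1)}\big)$; the contribution of the range $t<1/k$ is $O(k^{-(r+1)})$ and is smaller. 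Multiplying by the outer factor $k$ yields $\E[(\Pi^*)^r\1\{\Pi^*<k^{\beta-1}\}]=O(k^{(r+1-\tau)\beta-r})$, as claimed. For $r<\tau-1$, the exponent $r-\tau<-1$, so the integral $\int_{1/k}^\infty t^{r-\tau}k^{-\tau}\,dt$ converges at its upper end and is governed by the lower cutoff $1/k$; a cleaner route is to use $\E[(\Pi^*)^r]=k\,\E[\Pi_a^{r+1}]$ and compute $\E[\Pi_a^{r+1}]$ directly from the limit law $k\Pi_a\to\mathrm{Pareto}(1-\tfrac1\tau,\tau)$, whose $(r+1)$-st moment is $\tfrac{\tau}{\tau-1-r}(1-\tfrac1\tau)^{r+1}=\tfrac{(\tau-1)^{1+r}}{\tau^r(\tau-1-r)}$, together with a uniform-integrability argument (bounding $k^{r+1}\E[\Pi_a^{r+1}\1\{k\Pi_a>M\}]$ via the uniform tail estimate, valid since $r+1<\tau$) to justify passing the moment through the weak limit. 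This gives $\E[(\Pi^*)^r]\sim k^{-r}\tfrac{(\tau-1)^{1+r}}{\tau^r(\tau-1-r)}$.

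The main obstacle I anticipate is making the tail estimate $\P(\Pi_a>t)=\Theta((kt)^{-\tau})$ \emph{uniform} in $t$ over the relevant range (and in $k$), rather than just pointwise in the $k\to\infty$ limit; the exact formula \eqref{eq:pi-exact} reduces this to uniform upper and lower bounds on $\E[(\tfrac1{k-1}\sum_{b\ne a}e^{X_b/\tau})^{-\tau}]$, which follow from concentration of $Y_k$ around $(1-\tfrac1\tau)^{-1}$ plus a crude bound (e.g. $Y_k\ge 1$ gives the upper bound $\E[Y_k^{-\tau}]\le 1$, and a lower bound via $Y_k \le \tfrac1{k-1}\sum e^{X_b/\tau}$ with a moment or Paley--Zygmund argument). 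Dealing carefully with the boundary terms from integration by parts and with the transition region $t\asymp 1/k$ is the other bookkeeping hazard, but it is routine once the uniform tail bound is in hand.
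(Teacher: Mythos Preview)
Your approach is correct and takes a genuinely different route from the paper. The paper works \emph{density-first}: it differentiates the exact tail formula~\eqref{eq:pi-exact} to obtain the density $f(\alpha)$ of $\Pi_a$, passes to the size-biased density $f^*(\alpha)=k\alpha f(\alpha)$, changes variables to get the density $g_k(z)$ of $k\Pi^*$, and then invokes Scheff\'e's theorem to upgrade pointwise density convergence to convergence in distribution. For the moments, the paper extracts a uniform density bound $g_k(z)\le c^* z^{-\tau}$ and runs dominated convergence against the Pareto density.

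You instead stay at the level of tails and use the size-bias moment identity $\E[g(\Pi^*)]=k\,\E[\Pi_a g(\Pi_a)]$ together with layer-cake/integration-by-parts representations. This avoids differentiating altogether and replaces Scheff\'e by a direct tail computation; for the moments you swap dominated convergence (via a density bound) for uniform integrability (via a uniform tail bound). The two arguments are essentially dual: the paper's density bound $g_k(z)\le c^* z^{-\tau}$ is the differentiated form of your uniform tail bound $\P(k\Pi_a>z)\le Cz^{-\tau}$, and both ultimately rest on the fact that $\E[Y_k^{-\tau}]$ is bounded (trivially by $Y_k\ge1$), which is precisely the ``main obstacle'' you correctly identified. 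Your route is slightly more economical in that it never writes down a density and handles the truncated moment $\E[(\Pi^*)^r\1\{\Pi^*<k^{\beta-1}\}]$ by the same layer-cake machinery used for the tail; the paper's route buys a clean one-line dominated convergence for the untruncated moment once the density bound is in hand.
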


\begin{proof}[Proof of Lemma~\ref{lem:pi-size-bias}]
    For $\alpha>1/k$, we rewrite~\eqref{eq:pi-exact} to
    \[
    \P(\Pi_a>\alpha) \weq c_{k,\tau}\cdot(\tfrac1\alpha-1)^\tau,
    \]
    where
    \[
    c_{k,\tau} 
    \weq \E\left[ \left(\sum_{b\neq a}e^{X_b/\tau}\right)^{-\tau} \right] 
    \wsim \left(\frac{1-\tfrac1\tau}{k}\right)^{\tau}.
    \]
    Taking the derivative w.r.t. $\alpha$ yields the density
    \[
    f(\alpha) 
    \weq -\frac{d}{d\alpha}\P(\Pi_a>\alpha)
    \weq \frac{c_{k,\tau}\tau}{\alpha^2} \left( \frac1\alpha - 1 \right)^{\tau-1}
    \weq \frac{c_{k,\tau}\tau(1-\alpha)^{\tau-1}}{\alpha^{\tau+1}}.
    \]
    To derive the density of $\Pi_a$ for $(1-\tfrac1\tau)/k<\alpha\le 1/k$, we need to take the second term in~\eqref{eq:tail-annoying} into account. We rewrite it to
    \[
    \P(\Pi_a>\alpha)=c_{k,\tau}\cdot(\tfrac1\alpha-1)^\tau+\int_1^{\frac{1-\alpha}{(k-1)\alpha}}f_{Y_k}(y)\cdot\left(1-\left(\frac{1-\alpha}{y(k-1)\alpha}\right)^\tau\right)dy,
    \]
    where $f_{Y_k}(y)$ is the density of
    \[
    Y_k=\frac1{k-1}\sum_{b\neq a}e^{X_b/\tau}.
    \]
    Taking the derivative of the second term w.r.t. $\alpha$, we obtain
    \begin{align*}
        &\frac d{d\alpha}\int_1^{\frac{1-\alpha}{(k-1)\alpha}}f_{Y_k}(y)\cdot\left(1-\left(\frac{1-\alpha}{y(k-1)\alpha}\right)^\tau\right)dy\\
        =&\left(\frac d{d\alpha}\frac{1-\alpha}{(k-1)\alpha}\right)\cdot f_{Y_k}\left(\frac{1-\alpha}{(k-1)\alpha}\right)\cdot(1-1)+\int_1^{\frac{1-\alpha}{(k-1)\alpha}} f_{Y_k}(y)\tau\frac{\left(\tfrac1\alpha-1\right)^{\tau-1}\alpha^{-2}}{y^\tau(k-1)^\tau}dy\\
        =&\frac{\tau(\tfrac1\alpha-1)^{\tau-1}}{(k-1)^\tau\alpha^2}\E\left[Y_k^{-\tau}\cdot\1\left\{Y_k<\frac{1-\alpha}{(k-1)\alpha}\right\}\right]\\
        =&o\left(k^{-\tau}\alpha^{-\tau-1}\right),
    \end{align*}
    where the last step follows from
    \[
    \frac{1-\alpha}{(k-1)\alpha}=\frac{1}{k\alpha}+\frac{\tfrac1{k\alpha}-1}{k-1}>\tfrac{1}{k\alpha},
    \]
    and 
    \[
    \E\left[Y_k^{-\tau}\cdot\1\left\{Y_k<\frac{1-\alpha}{(k-1)\alpha}\right\}\right]\le\E\left[Y_k^{-\tau}\cdot\1\left\{Y_k<\frac{1}{k\alpha}\right\}\right]\to0,
    \]
    by the weak law of large numbers.
    This tells us that for $(1-\tfrac1\tau)/k<\alpha\le 1/k$, the density of $\Pi_a$ is
    \[
    f(\alpha)=\frac{c_{k,\tau}\tau(1-\alpha)^{\tau-1}}{\alpha^{\tau+1}}+o\left(k^{-\tau}\alpha^{-\tau-1}\right).
    \]
    Since $\Pi^*$ is the size-biased distribution of $\Pi_a$, its density is given by
    \begin{align*}
    f^*(\alpha)
    &\weq \frac{\alpha f(\alpha)}{\E[\Pi_a]}\\
    &\weq k\alpha f(\alpha)\\
    &\weq \frac{kc_{k,\tau}\tau(1-\alpha)^{\tau-1}}{\alpha^{\tau}}+o\left(k^{1-\tau}\alpha^{-\tau}\right).
    \end{align*}
    The density of $k\Pi^*$ is then
    \[
    g_k(z) 
    \weq k^{-1}f^*(z/k)
    \weq \frac{k^\tau c_{k,\tau}\tau(1-z/k)^{\tau-1}}{z^{\tau}}+o\left(z^{-\tau}\right),
    \]
    for $z\ge 1-\tfrac1\tau$. This converges point-wise to
    \[
    g(z) \weq \tau \left( 1- \tfrac{1}{\tau} \right)^\tau z^{-\tau}.
    \]
    Then, by Scheffé's theorem~\citep{scheffe1947useful}, $k\Pi^*$ converges in distribution to a random variable with density $g(z)$. Integrating $g(z)$ tells us that the corresponding tail function should be 
    \[
    \P(k\Pi^*>z)\to\int_z^\infty g(y)dy
    \weq \frac{\tau}{\tau-1} \left( 1-\tfrac1\tau \right)^\tau z^{1-\tau}
    \weq \left(\frac{1-\tfrac1\tau}{z}\right)^{\tau-1},
    \]
    which indeed corresponds to $\text{Pareto}\left(1-\tfrac1\tau,\tau-1\right)$.

    We now prove the asymptotics of the moments. For $r<\tau-1$, we use the dominated convergence theorem. 
    Let $c^*=\sup_kk^\tau c_{k,\tau}\tau$, which is finite since this sequence converges.
    We bound
    \begin{equation}\label{eq:powerlaw-biased-density-bound}
    g_k(z) 
    \wle \frac{k^\tau c_{k,\tau}\tau}{z^{\tau}}
    \wle c^*z^{-\tau},
    \end{equation}
    so that
    \[
    \E[(k\Pi^*)^r] \wle c^*\int_{1-\tfrac1\tau}^\infty z^{r-\tau}dz,
    \]
    since $r-\tau<-1$.
    The dominated convergence theorem then allows us to interchange the limit and integration, so that
    \begin{align*}
    \E[(k\Pi^*)^r]
    &\wto \int_{1-1/\tau}^\infty z^rg(z)dz\\
    & \weq \tau(1-\tfrac{1}{\tau})^\tau\int_{1-1/\tau}^\infty  z^{r-\tau}dz\\
    & \weq \frac{\tau}{\tau-1-r}(1-\tfrac{1}{\tau})^\tau\cdot(1-\tau^{-1})^{1+r-\tau}\\
    & \weq \frac{(\tau-1)^{1+r}}{\tau^r(\tau-1-r)}. 
    \end{align*}
    For $r>\tau-1$, we similarly write
    \begin{align*}
        \E[(k\Pi^*)^r\cdot\1\{k\Pi^*<k^\beta\}] 
        \wle c^*\int_{1-\tfrac1\tau}^{k^\beta}z^{r-\tau}dz
        \weq \bigO\left( k^{\beta(r+1-\tau)}\right),
    \end{align*}
    so that indeed $\E[(\Pi^*)^r\cdot\1\{\Pi^*<k^{\beta-1}\}] \weq \bigO(k^{\beta(r+1-\tau)-r})$.
    
\end{proof}

\subsection{Proof of Theorem~\ref{thm:power-law-rescaled}}
\begin{proof}[Proof of Theorem~\ref{thm:power-law-rescaled}]
    We prove that
    \[
    \frac{k_n S_n}{n} - k_n \Pi^* \wprto 0,
    \]
    so that the result follows from \cref{lem:pi-size-bias}.
    Using Chebyshev's bound, it suffices to show that
    \[
     \E\left[ k_n^2 \left( \frac{S_n}{n}-\Pi^* \right)^2 \right] \weq o(1).
    \]
    Conditioned on $\Pi^*$, $S_n-1$ is binomially distributed with $n-1$ trials and success probability $\Pi^*$. The variance is
    \[
    \cE{ \left( S_n-1-(n-1)\Pi^* \right)^2}{\Pi^*} \weq (n-1)\Pi^*(1-\Pi^*)
    \]
    Multiplying by $k_n^2/n^2$, we obtain
    \begin{equation}
    \label{eq:power-law-bound}
    \cE{ k_n^2 \left( \frac{S_n}{n} - \Pi^* - \frac{1-\Pi^*}{n} \right)^2 }{\Pi^*} \weq \frac{n-1}{n^2} k_n \Pi^* (k_n-k_n\Pi^*) 
    \wle \frac{k_n^2}{n} \Pi^*.
    \end{equation}
    We rewrite the left-hand-side as
    \begin{align*}
    & k_n^2\left( \frac{S_n}{n} - \Pi^* - \frac{1-\Pi^*}{n} \right)^2 \\
    & \weq k_n^2 \left( \frac{S_n}{n} - \Pi^* \right)^2 + \frac{k_n^2(1-\Pi^*)^2}{n^2}-\frac{2k_n^2(1-\Pi^*)}{n} \left( \frac{S_n}{n}-\Pi^* \right).
    \end{align*}
    We take the conditional expectation given $\Pi^*$.
    Using $\cE{S_n}{\Pi^*}=n\Pi^*+1-\Pi^*$, we obtain
    \[
    \cE{k_n^2 \left( \frac{S_n}{n} -\Pi^* \right)^2}{\Pi^*}-\frac{k_n^2(1-\Pi^*)^2}{n^2}.
    \]
    Because $k_n \ll n$, the last term vanishes. Taking the expectation of \eqref{eq:power-law-bound} w.r.t. $\Pi^*$ on both sides, we conclude
    \[
    \E\left[k_n^2 \left( \frac{S_n}{n} -\Pi^* \right)^2 \right] + o(1) \wle \frac{k_n}{n} \E\left[ k_n \Pi^* \right] \weq o\left( \frac{k_n}{n} \right).
    \]
\end{proof}

\begin{proof}[Proof of \cref{lem:powerlaw-mixed-Poisson}]
    Similarly as in the proof of \cref{thm:power-law-rescaled}, we condition on~$\Pi^*$ and use the fact that $S_n-1\sim\text{Bin}(n-1,\Pi^*)$ and consider the Poisson approximation with parameter $(n-1)\Pi^*$. We use \citet{roos2001sharp} to bound the difference between the probability mass function of the binomial and the Poisson distribution:
    \[
    \sup_{r=0,\dots,\infty}|\P(\text{Bin}(n,p)=r)-\P(\text{Poi}(np)=r)|\le np^2,
    \]
    so that
    \[
    -(n-1)(\Pi^*)^2\le\cP{S_n-1=r}{\Pi^*}-\frac{((n-1)\Pi^*)^r}{r!}e^{-(n-1)\Pi^*}\le (n-1)(\Pi^*)^2.
    \]
    We now take the expectation w.r.t. $\Pi^*$. \cref{lem:pi-size-bias} tells us that $\E[(\Pi^*)^2]=\bigO(k^{1-\tau})=\bigO(n^{1-\tau})$, so that
    \[
    \P(S_n=r+1)-\E\left[\frac{((n-1)\Pi^*)^r}{r!}e^{-(n-1)\Pi^*}\right]=\bigO(n^{2-\tau})\to0.
    \]
    Using \cref{lem:pi-size-bias}, $(n-1)\Pi^*\stackrel{d}\to \text{Pareto}(s\cdot (1-\tfrac1\tau),\tau-1)$ since $n-1\sim s\cdot k_n$. Since $x\mapsto x^se^{-x}$ is a bounded function, we conclude that 
    \[
    \P(S_n=r+1)=\E\left[\frac{((n-1)\Pi^*)^r}{r!}e^{-(n-1)\Pi^*}\right]+o(1)\to \E\left[\frac{Z^r}{r!}e^{-Z}\right].
    \]

\end{proof}

\subsection{Minimum Community Size in a Power-law Partition}
\label{subsection:powerlaw-minsize}

\begin{lemma}\label{lem:powerlaw-minsize}
 Let $\varepsilon>0$ and suppose $T_n\sim\text{Powerlaw}(\tau,k_n,n)$ for $\tau>2$ and $1\ll k_n \le \tfrac{\varepsilon^2}{4}\tfrac{\tau-1}{\tau}\tfrac{n}{\log n}$. Then with high probability, all communities are larger than $(1-\varepsilon)\frac{\tau-1}{\tau}\frac{n}{k_n}$. That is,
    \[
    \P\left(\exists i\in[n]:\ |T_n(i)|\le(1-\varepsilon)\frac{\tau-1}{\tau}\frac{n}{k_n} \right)\to0.
    \]
\end{lemma}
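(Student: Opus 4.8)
The plan is to bound the probability that \emph{some} community has fewer than $(1-\varepsilon)\tfrac{\tau-1}{\tau}\tfrac{n}{k_n}$ vertices by a union bound over the $k_n$ communities, and then control the probability that a single community $a$ is small. Conditioned on the proportions $(\Pi_b)_{b\in[k_n]}$, the size $|T_n^{(a)}|$ of community $a$ is $\mathrm{Bin}(n,\Pi_a)$, so I would split the event according to whether $\Pi_a$ itself is small or whether $\Pi_a$ is of the right order but the binomial undershoots its mean. Set $m_n = (1-\varepsilon)\tfrac{\tau-1}{\tau}\tfrac{n}{k_n}$ and pick a threshold $\pi_n = (1-\tfrac\varepsilon2)\tfrac{\tau-1}{\tau}\tfrac{1}{k_n}$ (the typical value of $\Pi_a$, since $k_n\Pi_a\to\mathrm{Pareto}(1-\tfrac1\tau,\tau)$ which has mean $\tfrac{\tau-1}{\tau}\cdot\tfrac{\tau}{\tau-1}$... more carefully, I would just take $\pi_n$ a constant fraction of $n^{-1}m_n$, e.g.\ $\pi_n = \tfrac{1}{n}\cdot\tfrac{m_n}{1-\varepsilon/2}$). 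Then
\[
\P\bigl(|T_n^{(a)}|\le m_n\bigr)
\;\le\; \P(\Pi_a \le \pi_n) \;+\; \P\bigl(\mathrm{Bin}(n,\pi_n)\le m_n\bigr).
\]

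For the second term, $\E[\mathrm{Bin}(n,\pi_n)] = n\pi_n = \tfrac{m_n}{1-\varepsilon/2} = (1+\Theta(\varepsilon))m_n$, so I would apply a multiplicative Chernoff bound for the lower tail: $\P(\mathrm{Bin}(n,\pi_n)\le m_n)\le \exp(-c_\varepsilon\, n\pi_n)$ for some $c_\varepsilon>0$. Since $n\pi_n = \Theta(n/k_n)$ and the hypothesis $k_n \le \tfrac{\varepsilon^2}{4}\tfrac{\tau-1}{\tau}\tfrac{n}{\log n}$ forces $n/k_n \ge \tfrac{4}{\varepsilon^2}\tfrac{\tau}{\tau-1}\log n \gg \log n$, this bound is $o(1/k_n)$ (indeed, the $\tfrac{\varepsilon^2}{4}$ in the hypothesis is precisely calibrated so that $c_\varepsilon\, n\pi_n \ge (1+\delta)\log n$; I would track the constant to confirm $c_\varepsilon\cdot\tfrac{4}{\varepsilon^2}>1$, using $c_\varepsilon \gtrsim \varepsilon^2$ from the Chernoff bound $\P(X\le(1-\delta)\E X)\le e^{-\delta^2\E X/2}$ with $\delta=\Theta(\varepsilon)$). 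Multiplying by $k_n$ gives $o(1)$.

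For the first term $\P(\Pi_a \le \pi_n)$, I would use the exact tail formula from Lemma~\ref{lem:powerlaw-tail}, namely $\P(\Pi_a > \alpha) = \bigl(\tfrac{1-\alpha}{(k-1)\alpha}\bigr)^\tau \E\bigl[(\tfrac{1}{k-1}\sum_{b\neq a}e^{X_b/\tau})^{-\tau}\bigr]$ for $\alpha\ge 1/k$; equivalently $\P(\Pi_a \le \alpha) = 1 - c_{k,\tau}(\tfrac1\alpha - 1)^\tau$ where $c_{k,\tau}\sim(\tfrac{1-1/\tau}{k})^\tau$. Plugging in $\alpha=\pi_n\sim(1-\tfrac\varepsilon2)\tfrac{\tau-1}{\tau k_n}$ gives $k_n\alpha \to (1-\tfrac\varepsilon2)(1-\tfrac1\tau) < 1-\tfrac1\tau$, so this is a region where $\P(k_n\Pi_a \le k_n\alpha)$ is genuinely close to $1$ — that is, a \emph{constant} fraction of communities have proportion below $\pi_n$, which means the union bound as naively stated \emph{fails} and I need to be more careful.

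The fix, which I expect to be the main obstacle, is to choose the thresholds more cleverly so that the "small-$\Pi_a$" event is itself rare. Instead of splitting at the typical size, I would take $\pi_n$ much smaller, say $\pi_n = \tfrac{m_n}{n}\cdot(1+\eta)$ for a small fixed $\eta>0$ but then note $k_n\pi_n \to (1-\varepsilon)(1+\eta)(1-\tfrac1\tau)$, and choose $\varepsilon,\eta$ so that this limit is still $<1-\tfrac1\tau$: so $\P(\Pi_a\le\pi_n)$ does not vanish. This shows the two-term split is the wrong decomposition. The correct approach is: directly bound, via Markov applied to $\sum_a \mathbf 1\{|T_n^{(a)}|\le m_n\}$ after first conditioning and integrating out the binomial, but exploiting that $|T_n^{(a)}|$ small \emph{and} $\Pi_a$ not-too-small is a large-deviation event whereas $\Pi_a$ small is controlled by the Pareto tail having polynomial (in $k_n$) decay at scale $k_n^{-1-\delta}$. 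Concretely I would use $\P(k_n\Pi_a \le x) = 1 - c_{k_n,\tau}(k_n/x - 1)^\tau \le 1$, useless for $x=\Theta(1)$ — so instead set $\alpha_n = k_n^{-1}\log^{-1/2}(n)$ or similar, giving $\P(\Pi_a\le\alpha_n) = \bigl(1-c_{k_n,\tau}(\tfrac{1}{\alpha_n}-1)^\tau\bigr)$; here $\tfrac{1}{\alpha_n} = k_n\sqrt{\log n}$ so $c_{k_n,\tau}(\tfrac1{\alpha_n})^\tau \sim ((1-\tfrac1\tau)\sqrt{\log n})^\tau \to\infty$ — that is impossible since a probability is $\le 1$, revealing that the formula only holds while the bracket is $\le 1$, i.e.\ while $c_{k_n,\tau}(\tfrac1\alpha-1)^\tau\le 1$, i.e.\ $\alpha \gtrsim 1/k_n$. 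So genuinely, $\P(\Pi_a \le c/k_n)$ is bounded \emph{away from $1$ and from $0$} for constant $c<(1-\tfrac1\tau)$, and one cannot make the first term $o(1/k_n)$ by choosing $\pi_n$. Therefore the only workable route is: take $\pi_n$ with $k_n\pi_n \to c^* := (1-\tfrac1\tau)\cdot\theta$ for a $\theta<1$ chosen so that $(1-\varepsilon)\tfrac{\tau-1}{\tau} < c^*$, i.e.\ $\theta > 1-\varepsilon$, impossible since we need $\theta<1$ and $1-\varepsilon<1$ — so pick $\theta\in(1-\varepsilon,1)$, which is nonempty. Then $\P(\Pi_a\le\pi_n)=1-c_{k_n,\tau}(\tfrac1{\pi_n}-1)^\tau \to 1-(\tfrac{1-1/\tau}{c^*})^\tau$, a constant in $(0,1)$ — still not $o(1/k_n)$. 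I conclude the union bound must instead be run on the \emph{joint} event and one must show $\P(\text{some }|T_n^{(a)}|\le m_n)$ vanishes by a second-moment / Poissonization argument on the number of undersized communities, using that conditionally on $(\Pi_b)$ these indicator events are independent; the expected number of undersized communities is $\sum_a \P(\mathrm{Bin}(n,\Pi_a)\le m_n)$, and one shows this is $o(1)$ by arguing that the dominant contribution comes from $\Pi_a$ near $m_n/n$ where the binomial lower-tail bound $e^{-c_\varepsilon n\Pi_a} \le e^{-c_\varepsilon m_n} = e^{-c_\varepsilon(1-\varepsilon)\frac{\tau-1}{\tau}\frac{n}{k_n}}$ kicks in, and multiplying by $k_n$ and using $n/k_n \gg \log n$ (quantitatively via the $\tfrac{\varepsilon^2}{4}$ hypothesis) gives $o(1)$, while the contribution from $\Pi_a \ll m_n/n$ is controlled because $\P(\Pi_a \le \delta m_n/n)$ carries the Pareto factor $(k_n \cdot \delta m_n/n)^{\tau}$... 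I would organize the final proof around exactly this dichotomy and track the constant to verify that the hypothesis $k_n\le\tfrac{\varepsilon^2}{4}\tfrac{\tau-1}{\tau}\tfrac{n}{\log n}$ is what makes $k_n e^{-c_\varepsilon(1-\varepsilon)\frac{\tau-1}{\tau} n/k_n}\to 0$.
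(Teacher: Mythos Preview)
Your proposal contains a crucial error about the distribution of $\Pi_a$. You assert that for $c<1-\tfrac1\tau$ the probability $\P(\Pi_a\le c/k_n)$ is ``genuinely close to $1$'' (and later, ``bounded away from $1$ and from $0$''). This is false: by Lemma~\ref{lem:powerlaw-tail}, $k_n\Pi_a\stackrel{D}{\to}\text{Pareto}(1-\tfrac1\tau,\tau)$, and this Pareto distribution is supported on $[1-\tfrac1\tau,\infty)$, so $\P(k_n\Pi_a\le c)\to 0$ for every $c<1-\tfrac1\tau$. The structural reason is that $\Pi_a=e^{X_a/\tau}/\sum_be^{X_b/\tau}\ge 1/\sum_be^{X_b/\tau}$ (since $e^{X_a/\tau}\ge1$), and by the law of large numbers the denominator concentrates around $k_n\tau/(\tau-1)$. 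Because of this misconception you abandon the two-term split prematurely; in principle the split could work, but it would still require a quantitative bound $k_n\,\P(\Pi_a\le\pi_n)\to0$, which is a heavy-tailed large-deviation estimate you never carry out. Your fallback argument is also broken: you claim ``the dominant contribution comes from $\Pi_a$ near $m_n/n$ where the binomial lower-tail bound $e^{-c_\varepsilon n\Pi_a}$ kicks in,'' but when $\Pi_a\approx m_n/n$ the binomial has mean $\approx m_n$ and $\P(\text{Bin}(n,\Pi_a)\le m_n)$ is of order $1/2$, not exponentially small.

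The paper avoids all of this with one observation you are missing. Since $\Pi_a$ is increasing in $X_a$, the minimiser $a^*$ of $\Pi_a$ is the minimiser of $X_a$; by memorylessness of the exponential, conditionally on $a^*$ the differences $(X_b-X_{a^*})_{b\neq a^*}$ are again i.i.d.\ $\mathrm{Exp}(1)$, so
\[
\Pi_{a^*}=\Bigl(1+\sum_{b\neq a^*}e^{(X_b-X_{a^*})/\tau}\Bigr)^{-1}
\qquad\text{and}\qquad
k_n\Pi_{a^*}\ \wprto\ 1-\tfrac1\tau
\]
by the law of large numbers. One then bounds the expected number of undersized communities by $k_n\,\E\bigl[\P(\text{Bin}(n,\Pi_{a^*})\le m_n\mid(\Pi_b))\bigr]$ (using $\Pi_a\ge\Pi_{a^*}$ for every $a$), replaces $\Pi_{a^*}$ by its concentration value $\tfrac{\tau-1}{\tau k_n}$, and applies the Chernoff bound $\P(\text{Bin}(n,x)\le(1-\varepsilon)nx)\le e^{-\varepsilon^2 nx/4}$. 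The hypothesis $k_n\le\tfrac{\varepsilon^2}{4}\tfrac{\tau-1}{\tau}\tfrac{n}{\log n}$ is exactly what makes the resulting exponent at least $\log n$, so the whole thing is at most $k_n/n\to0$. The key idea is thus to work with the \emph{minimum} proportion directly rather than to union-bound the lower tails of the $k_n$ individual proportions.
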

\begin{proof}[Proof of \cref{lem:powerlaw-minsize}]
    We first study the distribution of $\min_{a\in[k]}\{\Pi_a\}$.
    Note that $X_a<X_b$ implies $\Pi_a<\Pi_b$.
    Hence, the $a$ that minimizes $\Pi_a$ is the one with the minimal $X_a$. The minimum among these $k_n$ exponentially distributed random variables is exponentially distributed with rate $k_n$. Given that $a^*$ is the minimizer, the distribution of $X_b-X_{a^*}$ is exponential with rate $1$ for $b\neq a^*$. This allows us to write
    \begin{align*}
    \Pi_{a^*} & \weq \frac{e^{X_{a^*}/\tau}}{\sum_{a\in[k]}e^{X_a/\tau}}
    \weq \left(1+\sum_{a\neq a^*} e^{(X_a-X_{a^*})/\tau}\right)^{-1}.
    \end{align*}
    Then, by the weak law of large numbers, 
    \[
    k_n \Pi_{a^*} \weq \frac{k_n}{1+\sum_{a\neq a^*} e^{(X_a-X_{a^*})/\tau}} \wprto 1-\frac{1}{\tau}.
    \]

    Given $\Pi_a$, the distribution of the $a$-th community is binomially distributed with $n$ trials and success probability $\Pi_a$.
    The Markov inequality allows us to upper-bound the probability that there is a community smaller than $(1-\varepsilon)\frac{\tau-1}{\tau}\frac{n}{k_n}$ by the expected number of such small communities.
   This yields
    \[
    \P\left(\exists i\in[n]:\ |T_n(i)|\le(1-\varepsilon)\frac{\tau-1}{\tau}\frac{n}{k_n} \right)
    \wle \E\left[\sum_{a\in[k_n]}\P\left(\text{Bin}(n,\Pi_a)
    \wle (1-\varepsilon)\frac{\tau-1}{\tau}\frac{n}{k_n}\right)\right].
    \]
    Since $\text{Bin}(n,\Pi_a)$ stochastically dominates $\text{Bin}(n,\Pi_{a^*})$, the above is upper-bounded by
    \[
     k_n \E\left[\P\left(\text{Bin}(n,\Pi_{a^*}) \wle (1-\varepsilon) \frac{\tau-1}{\tau}\frac{n}{k_n} \right)\right].
    \]
    Since $x\mapsto\P\left(\text{Bin}(n,x)\le (1-\varepsilon)\frac{\tau-1}{\tau}\frac{n}{k_n}\right)$ is a bounded function, the weak law of large numbers tells us that 
    \[
    \E\left[\P\left(\text{Bin}(n,\Pi_{a^*}) \le (1-\varepsilon)\frac{\tau-1}{\tau}\frac{n}{k_n}\right)\right] 
    \wto \P\left(\text{Bin} \left( n, \frac{\tau-1}{k_n\tau} \right) \le (1-\varepsilon)\frac{\tau-1}{\tau}\frac{n}{k_n}\right).
    \]
    The Chernoff bound tells us that for $x,\varepsilon\in(0,1)$ it holds that
    \[
    \P(\text{Bin}(n,x) \le n\cdot (1-\varepsilon)x) 
    \wle e^{-n\cdot d_{KL}((1-\varepsilon)x\|x)},
    \]
    where $d_{KL}(y\|x)$ is the Kullback-Leibler divergence and can be lower-bounded by
    \[
    d_{KL}((1-\varepsilon)x\|x) \wge \frac{\varepsilon^2}{4}x,
    \]
    for $\varepsilon\in(0,\tfrac{1}{2})$.
    Taking these together, we obtain
    \[
    \P\left(\text{Bin}\left(n,\frac{\tau-1}{k_n\tau} \right) \le (1-\varepsilon) \frac{\tau-1}{\tau}\frac{n}{k_n}\right)
    \wle \exp\left( -\frac{\varepsilon^2}{4} \frac{\tau-1}{\tau} \frac{n}{k_n} \right).
    \]
    By our assumption on $k_n$, 
    \[
    \exp\left( -\frac{\varepsilon^2}{4} \frac{\tau-1}{\tau} \frac{n}{k_n} \right) \wle \frac{1}{n}.
    \]
    We conclude that
    \[
    \P\left(\exists i\in[n]:\ |T_n(i)|\le(1-\varepsilon)\frac{\tau-1}{\tau}\frac{n}{k_n} \right)
    \wle \frac{k_n}{n}
    \weq o\left(\frac{1}{\log n}\right).
    \]    
\end{proof}

\subsection{Additional Lemmas}
\label{subsection:additional_lemmas_power-law}

\begin{lemma}\label{lem:powerlaw-condition}
    Let $\tau>2$ and $\max\{\sqrt{n},n^{\frac{1}{\tau-1}}\}\ll k_n\le n$. If $T_n\sim\text{Powerlaw}(\tau,k_n,n)$, then $\E[S_n^2]=o(n)$.
\end{lemma}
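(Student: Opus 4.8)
The plan is to condition on the size-biased community proportion $\Pi^*$ of the uniformly random vertex $I_n$ and reduce $\E[S_n^2]$ to low-order moments of $\Pi^*$, which are controlled by \cref{lem:pi-size-bias}. As in the proof of \cref{thm:power-law-rescaled}, conditionally on $\Pi^*$ the random variable $S_n-1$ is $\Binom(n-1,\Pi^*)$. Hence, writing $B=S_n-1$, we have $\E[B\mid\Pi^*]=(n-1)\Pi^*$ and $\E[B^2\mid\Pi^*]=(n-1)\Pi^*(1-\Pi^*)+(n-1)^2(\Pi^*)^2\le n\Pi^*+n^2(\Pi^*)^2$. Taking expectations in $S_n^2=1+2B+B^2$ gives
\[
\E[S_n^2]\ \le\ 1+3n\,\E[\Pi^*]+n^2\,\E[(\Pi^*)^2],
\]
so it suffices to show that $n\,\E[\Pi^*]=o(n)$ and $n^2\,\E[(\Pi^*)^2]=o(n)$.

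For the first term, \cref{lem:pi-size-bias} with $r=1$ (valid since $\tau>2$) gives $\E[\Pi^*]\sim k_n^{-1}\tfrac{(\tau-1)^2}{\tau(\tau-2)}$, so $n\,\E[\Pi^*]=\Theta(n/k_n)=o(n)$ because $k_n\to\infty$. For the second term I would split on the tail exponent. If $\tau>3$, then $r=2<\tau-1$, so \cref{lem:pi-size-bias} yields $\E[(\Pi^*)^2]=\Theta(k_n^{-2})$, and $n^2\,\E[(\Pi^*)^2]=\Theta(n^2/k_n^2)=o(n)$ exactly because $k_n\gg\sqrt n$. If $2<\tau<3$, then $r=2>\tau-1$, and applying the truncated-moment bound of \cref{lem:pi-size-bias} with $\beta=1$ --- for which $\1\{\Pi^*<k_n^{\beta-1}\}=\1\{\Pi^*<1\}=1$ almost surely --- gives $\E[(\Pi^*)^2]=\bigO(k_n^{(2+1-\tau)-2})=\bigO(k_n^{1-\tau})$, so $n^2\,\E[(\Pi^*)^2]=\bigO(n^2k_n^{1-\tau})=o(n)$ exactly because $k_n\gg n^{1/(\tau-1)}$. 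The endpoint $\tau=3$ lies between these regimes and is treated by the same density estimate underlying \cref{lem:pi-size-bias} (the bound $g_k(z)\le c^*z^{-\tau}$ on the density of $k_n\Pi^*$ together with $\Pi^*\le1$), which contributes at most an extra logarithmic factor. Combining the two estimates proves $\E[S_n^2]=o(n)$.

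The main obstacle is the estimate of $\E[(\Pi^*)^2]$ in the heavy-tailed regime $\tau\le 3$: there the second moment is dominated not by the typical community of size $\Theta(n/k_n)$ but by the atypically large communities, which is precisely why the hypothesis requires the additional lower bound $k_n\gg n^{1/(\tau-1)}$ on top of the bound $k_n\gg\sqrt n$ that alone suffices when $\tau>3$. Concretely, the integral $\int z^{2-\tau}\,dz$ governing this moment is divergent at $+\infty$ for $\tau<3$, so one must exploit the hard truncation $\Pi^*\le1$ (equivalently, that $k_n\Pi^*$ is supported below $k_n$) and track the upper limit of integration rather than extending it to infinity; the truncated-moment estimate in \cref{lem:pi-size-bias} is exactly the tool that packages this, with the borderline exponent $\tau=3$ producing only a logarithmic correction.
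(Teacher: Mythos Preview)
Your proof is essentially identical to the paper's: both condition on $\Pi^*$, use $S_n-1\sim\Binom(n-1,\Pi^*)$ to reduce $\E[S_n^2]$ to $1+3(n-1)\E[\Pi^*]+(n-1)(n-2)\E[(\Pi^*)^2]$, and then invoke \cref{lem:pi-size-bias} with the same case split on $\tau$ (the paper writes the exact expression where you write the inequality, but this is immaterial). Your explicit remark about the borderline $\tau=3$ case and the logarithmic correction is a point the paper simply folds into the $\tau\in(2,3]$ case without comment.
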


\begin{proof}
 Conditioned on the value of $\Pi^*$, $S_n-1\sim\text{Bin}(n-1,\Pi^*)$. Hence, 
 \begin{align*}
    \cE{S_n^2}{\Pi^*}
    & \weq \cE{1+2(S_n-1)+(S_n-1)^2}{\Pi^*}\\
    & \weq 1+2(n-1)\Pi^*+(n-1)\Pi^*(1-\Pi^*)+(n-1)^2(\Pi^*)^2\\
    & \weq 1+3(n-1)\Pi^*+(n-1)(n-2)(\Pi^*)^2.
 \end{align*}
 Taking the expectation w.r.t. $\Pi^*$, we obtain
 \[
  \E \left[ S_n^2 \right] \weq 1+3(n-1)\E[\Pi^*]+(n-1)(n-2)\E[(\Pi^*)^2].
 \]
 \cref{lem:pi-size-bias} tells us that for $\tau>3$, $\E[\Pi^*]=\bigO(k_n^{-1})$ and $\E[(\Pi^*)^2]=\bigO(k_n^{-2})$. So that we need $k_n \gg \sqrt{n}$ to ensure $n^2 k_n^{-2} = o(n)$. For $\tau\in(2,3]$, $\E[(\Pi^*)^2]=\bigO(k_n^{1-\tau})$, so that we require $k_n \gg n^{\tfrac{1}{\tau-1}}$ to ensure $n^2 k_n^{1-\tau}=o(n)$.
\end{proof}

\begin{lemma}\label{lem:powerlaw-mT}
    If $T_n\sim\text{Powerlaw}(\tau,k_n,n)$ for $\tau>2$ and $k_n\to\infty$, then
    \[
    \frac{m_{T_n} }{\E[m_{T_n}]} \wprto 1,
    \]
    and
    \[
    \E[m_{T_n}] \wsim \frac{n^2(\tau-1)^2}{2k_n\tau(\tau-2)}.
    \]
\end{lemma}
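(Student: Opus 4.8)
The plan is to separate the two sources of randomness in $m_{T_n}$: the community proportions $\Pi=(\Pi_a)_{a\in[k_n]}$, and, conditionally on $\Pi$, the independent assignment of the $n$ vertices to communities. Writing $Y_{ij}=\1\{i\rel{T_n}j\}$ so that $m_{T_n}=\sum_{i<j}Y_{ij}$, I will use throughout the identity $\cE{(\Pi^*)^r}{\Pi}=\sum_{a\in[k_n]}\Pi_a^{r+1}$, which is immediate from the definition of the size-biased proportion $\Pi^*$ recalled in Section~\ref{subsec:construction_power_law_partition}; in particular $\cE{m_{T_n}}{\Pi}=\binom n2\sum_a\Pi_a^2$, so that $\E[m_{T_n}]=\binom n2\E[\Pi^*]$ and $\E\big[\sum_a\Pi_a^3\big]=\E[(\Pi^*)^2]$. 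The second claim is then the quick part: since $S_n-1\sim\Binom(n-1,\Pi^*)$ conditionally on $\Pi^*$, we have $\E[m_{T_n}]=\tfrac n2\E[S_n-1]=\binom n2\E[\Pi^*]$, and \cref{lem:pi-size-bias} with $r=1$ (admissible because $\tau>2$ forces $1<\tau-1$) gives $\E[\Pi^*]\sim k_n^{-1}\tfrac{(\tau-1)^2}{\tau(\tau-2)}$, hence $\E[m_{T_n}]\sim\tfrac{n^2(\tau-1)^2}{2k_n\tau(\tau-2)}$.

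For the concentration I would write $m_{T_n}=\cE{m_{T_n}}{\Pi}+\big(m_{T_n}-\cE{m_{T_n}}{\Pi}\big)$, divide by $\E[m_{T_n}]$, and handle the two pieces separately. For the first piece, I would rewrite $\sum_a\Pi_a^2=\tfrac1{k_n}\cdot\big(k_n^{-1}\sum_a e^{2X_a/\tau}\big)\big/\big(k_n^{-1}\sum_b e^{X_b/\tau}\big)^2$; since $\tau>2$ makes both $\E[e^{X_a/\tau}]=\tfrac\tau{\tau-1}$ and $\E[e^{2X_a/\tau}]=\tfrac\tau{\tau-2}$ finite, the weak law of large numbers gives $k_n\sum_a\Pi_a^2\wprto\tfrac{(\tau-1)^2}{\tau(\tau-2)}$, which together with $k_n\E[\Pi^*]\to\tfrac{(\tau-1)^2}{\tau(\tau-2)}$ yields $\cE{m_{T_n}}{\Pi}/\E[m_{T_n}]\wprto1$. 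It matters here to argue through the law of large numbers rather than through a variance bound on $\sum_a\Pi_a^2$, since the latter would involve $\E[\Pi_a^4]$, which is badly behaved for $\tau\le4$.

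For the second piece I would use conditional Chebyshev plus the tower property, $\P\big(|m_{T_n}-\cE{m_{T_n}}{\Pi}|>\varepsilon\E[m_{T_n}]\big)\le\varepsilon^{-2}\E[m_{T_n}]^{-2}\,\E\big[\mathrm{Var}(m_{T_n}\mid\Pi)\big]$, and bound $\mathrm{Var}(m_{T_n}\mid\Pi)$ by expanding over pairs $Y_{ij},Y_{k\ell}$: pairs supported on disjoint vertex sets are conditionally independent, so only the $\bigO(n^2)$ diagonal terms (each at most $\sum_a\Pi_a^2$) and the $\bigO(n^3)$ pairs sharing exactly one vertex (each with conditional covariance $\sum_a\Pi_a^3-(\sum_a\Pi_a^2)^2\in[0,\sum_a\Pi_a^3]$, using Cauchy--Schwarz) survive, giving $\mathrm{Var}(m_{T_n}\mid\Pi)=\bigO\big(n^2\sum_a\Pi_a^2+n^3\sum_a\Pi_a^3\big)$. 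Taking expectations and using $\E[\sum_a\Pi_a^2]=\E[\Pi^*]=\Theta(k_n^{-1})$ and $\E[\sum_a\Pi_a^3]=\E[(\Pi^*)^2]$, it remains to check $n^2\E[\Pi^*]+n^3\E[(\Pi^*)^2]=o\big(\E[m_{T_n}]^2\big)=o(n^4k_n^{-2})$. The first term is $\bigO(k_n/n^2)\le\bigO(1/n)$ by $k_n\le n$; for the second, fixing $\varepsilon\in(0,\tau-2)$ and $r=\min\{2,\tau-1-\varepsilon\}<\tau-1$, the bound $\Pi^*\le1$ gives $\E[(\Pi^*)^2]\le\E[(\Pi^*)^r]=\bigO(k_n^{-r})$ by \cref{lem:pi-size-bias}, whence $n^3\E[(\Pi^*)^2]/(n^4k_n^{-2})=\bigO(k_n^{2-r}/n)\le\bigO(n^{1-r})\to0$, since $1-r=\max\{-1,\,2-\tau+\varepsilon\}<0$. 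Combining the two pieces gives $m_{T_n}/\E[m_{T_n}]\wprto1$.

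The step I expect to be the main obstacle is controlling $n^3\E[(\Pi^*)^2]$ in the heavy-tailed regime $2<\tau\le3$: there $\E[(\Pi^*)^2]$ is dominated by the largest community proportions and decays only like $k_n^{1-\tau}$ rather than $k_n^{-2}$, so the crude conditional-variance term $n^3\sum_a\Pi_a^3$ is not automatically negligible and closing the estimate genuinely requires using $k_n\le n$. A related pitfall to avoid is controlling the $\Pi$-level fluctuations of $m_{T_n}$ via a direct second-moment computation on $\sum_a\Pi_a^2$, which would require $\E[\Pi_a^4]$ (infinite-variance issues for $\tau\le4$); the law-of-large-numbers representation used above circumvents this, relying only on the first moments that exist precisely because $\tau>2$.
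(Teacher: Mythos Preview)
Your argument is correct and takes a genuinely different route from the paper. The paper computes $\E[m_T^2]$ directly by expanding over pairs of pairs and matching it to $\E[m_T]^2$; this forces a case split: for $\tau>3$ the term $\P(1\rel{T}2\rel{T}3)=\E[(\Pi^*)^2]=\bigO(k_n^{-2})$ is harmless, but for $2<\tau\le3$ the paper introduces a truncated quantity $m_T^L$ (restricting to communities with $\Pi_a<k_n^{-1/2}$), shows $m_T=m_T^L$ w.h.p., and then reruns the second-moment computation on $m_T^L$. Your conditional decomposition $m_{T_n}=\cE{m_{T_n}}{\Pi}+(m_{T_n}-\cE{m_{T_n}}{\Pi})$ replaces this with two simpler pieces: the LLN representation $k_n\sum_a\Pi_a^2=\big(k_n^{-1}\sum e^{2X_a/\tau}\big)\big/\big(k_n^{-1}\sum e^{X_b/\tau}\big)^2$ handles the $\Pi$-level fluctuations uniformly in $\tau>2$ using only first moments of $e^{2X/\tau}$ (exactly the observation in your last paragraph), and the conditional-variance bound then needs only $\E[(\Pi^*)^2]$, for which the interpolation $\E[(\Pi^*)^2]\le\E[(\Pi^*)^r]=\bigO(k_n^{-r})$ with $r=\min\{2,\tau-1-\varepsilon\}$ together with $k_n\le n$ suffices. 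What your approach buys is a cleaner, essentially case-free proof that avoids the truncation device; what the paper's direct computation buys is that it never invokes the explicit exponential structure of the $\Pi_a$'s beyond the moment asymptotics already packaged in \cref{lem:pi-size-bias} and \cref{lem:powerlaw-tail}. Both are complete; yours is the shorter of the two.
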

\begin{proof}
 To avoid overburdening the notations, we write $T$ instead of $T_n$. First, we compute the expectation as
 \[
  \E[m_T] \weq \sum_{i<j}\P(i\rel{T}j).
 \]
 Now, we write
    \[
    \P(i\rel{T}j) 
    \weq \E\left[\sum_{a\in[k]}\Pi_a^2\right] 
    \weq k_n \E[\Pi_a^2].
    \]
    Because $\Pi^*$ is the size-biased version of $\Pi_a$, their moments are related by
    \[
    \E[(\Pi^*)^r] 
    \weq \frac{\E[\Pi_a^{1+r}]}{\E[\Pi_a]}
    \weq k_n \E[\Pi_a^{1+r}],
    \]
    so that $k_n \E[\Pi_a^2] = \E[\Pi^*]$. Using \cref{lem:pi-size-bias}, we obtain
    \[
    \E[m_T]
    \weq {n\choose 2}\E[\Pi^*] 
    \wsim \frac{n^2}{2}\frac{(\tau-1)^2}{k_n\tau(\tau-2)}.
    \]
    To show that $m_T/\E[m_T]\wprto1$, we distinguish two cases.

\medskip
    \textbf{The case $\tau>3$.}
    We write
    \[
    (m_T)^2
    \weq \left(\sum_{i<j}\1(i\rel{T}j)\right)^2.
    \]
    We distinguish the different products of indicators based on the number of distinct vertices that are involved. There are ${n\choose 2}$ terms that involve two vertices (products of indicators with itself), ${n\choose 2}\cdot{n-2\choose 2}$ terms with four distinct vertices, and ${n\choose 2}\cdot 2(n-2)$ terms that involve three distinct vertices.
    By symmetry, this allows us to write
    \begin{align}
    \E[m_T^2] \weq 
    & {n\choose 2}\P(1\rel{T}2)\nonumber\\
    & + {n\choose 2}\cdot {n-2\choose 2}\P(1\rel{T}2\wedge3\rel{T}4)\nonumber\\
    & + {n\choose 2}2(n-2)\P(1\rel{T}2\rel{T}3).
    \label{eq:powerlaw-mT-second-moment}
    \end{align}
    To show convergence, we need to show $\E[m_T^2]\sim \E[m_T]^2$. The first term of \eqref{eq:powerlaw-mT-second-moment} is $\E[m_T]=o(\E[m_T]^2)$.
    The third term of \eqref{eq:powerlaw-mT-second-moment} can be computed using \cref{lem:pi-size-bias}. We have
    \begin{align*}
    \P(1\rel{T}2\rel{T}3) 
    \weq k_n \E \left[ \Pi_a^3 \right]
    \weq \E \left[ (\Pi^*)^2 \right]
    \weq \bigO \left( k_n^{-2} \right),
    \end{align*}
    so that this term is also negligible.

    The term $\P(1\rel{T}2\wedge3\rel{T}4)$ requires some extra steps. We first write
    \begin{align}
    \P(1\rel{T}2\wedge3\rel{T}4) 
    \weq \P(1\rel{T}2\rel{T}3\rel{T}4) + \P(1\rel{T}2\nrel{T}3\rel{T}4). 
    \label{eq:in_proof_4way}
    \end{align}
    The first term of~\eqref{eq:in_proof_4way} is equal to $\E[(\Pi^*)^3] = \bigO(k_n^{1-\tau}) = o(k_n^{-2})$.
    To handle the second term of~\eqref{eq:in_proof_4way}, we sum over all possible labels for the community containing vertices $\{1,2\}$ and vertices $\{3,4\}$. We write
    \[
    \P(1\rel{T}2\nrel{T}3\rel{T}4) 
    \weq \sum_{a\neq b}\E \left[ \Pi_a^2\Pi_b^2 \right]
    \weq k_n(k_n-1) \E \left[ \Pi_1^2\Pi_2^2 \right].
    \]
    Using the definitions of $\Pi_1,\Pi_2$, this can be rewritten to
    \[
    \E \left[ \Pi_1^2\Pi_2^2 \right] 
    \weq \E\left[\frac{e^{2X_1/\tau}e^{2X_2/\tau}}{\left(\sum_{a\in[k_n]}e^{X_a/\tau}\right)^4}\right] 
    \wsim \frac{(\tau-1)^4}{k_n^4\tau^2(\tau-2)^2},
    \]
    where we used the strong law of large numbers and $\E[e^{tX_1}]=(1-t)^{-1}$. It follows that
    \[
    \P(1\rel{T}2\wedge3\rel{T}4) \wsim \frac{(\tau-1)^4}{k_n^2\tau^2(\tau-2)^2}.
    \]
    Putting these together, we obtain that for $\tau>3$,
    \[
    \E[m_T^2] \wsim \frac{n^4}{4}\frac{(\tau-1)^4}{k_n^2\tau^2(\tau-2)^2} \wsim \E[m_T]^2.
    \]
    This implies that $\Var(m_T/\E[m_T]) = o(1)$, so that $m_T/\E[m_T]\wprto1$ for $\tau>3.$

    \medskip
    \textbf{The case $2<\tau\le3.$}
    We define
    \[
    m_T^L \weq \sum_{a\in[k_n]}{|T_a|\choose 2}\cdot\1\left\{\Pi_a<k_n^{-\tfrac12} \right\}.
    \] 
    We use the Markov inequality to show that $m_T=m_T^L$ holds with high probability for $\tau\in(2,3]$:
    \begin{align*}
        \P(m_T^L\neq m_T)
        & \weq \P \left(\exists a\in[k]:\ \Pi_a\ge k_n^{-\tfrac12}\right)\\
        & \wle k_n \P \left( \Pi_a\ge k_n^{-\tfrac12} \right) \\
        & \weq k_n \cdot \bigO\left( \left( k_n\cdot k_n^{-\tfrac12} \right)^{-\tau}\right)\\
        & \weq \bigO \left( k_n^{1-\tfrac\tau2} \right) \wto 0,
    \end{align*}
    where we used \cref{lem:powerlaw-tail} and $\tau>2$. 
    Additionally, we show that $\E[m_T^L]\sim\E[m_T]$, or equivalently, that $\E[m_T-m_T^L]=o(n^2/k_n)$. We write
    \begin{align*}
    \E \left[ m_T-m_T^L \right]
    & \weq {n\choose 2} \E \left[ \Pi^*\cdot \1 \left\{ \Pi^* > k_n^{-\tfrac12} \right\} \right],
    \end{align*}
    Using the upper bound on the density of $k_n \Pi^*$ from \eqref{eq:powerlaw-biased-density-bound}, we obtain
    \begin{align*}
        k_n^{-1}\E[k_n \Pi^*\cdot\1\{k_n\Pi^*>\sqrt{k_n}\}]
        & \wle \frac{c^*}{k_n} \int_{\sqrt{k_n}}^\infty z^{1-\tau}dz
        \weq \bigO(k_n^{(2-\tau)/2-1}) 
        \weq o(k_n^{-1}),
    \end{align*}
    so that indeed $\E[m_T^L]\sim\E[m_T]$.

    In the remainder of the proof, we use Chebyshev's inequality to prove that $m_T^L/\E[m_T^L]\wprto1$. Let $\Pi^*(i)$ denote the $\Pi_a$ that corresponds to the community that the vertex $i$ is assigned to.
    Similarly to \eqref{eq:powerlaw-mT-second-moment}, we write the second moment of $m_T^L$ as a sum of indicators, and we distinguish the different products as follows 
    \begin{align}
    \E[(m_T^L)^2] \weq & {n\choose 2} \P \left( 1\rel{T}2,\Pi^*(1)<k_n^{-\tfrac12} \right) \nonumber\\
    &+{n\choose 2}\cdot {n-2\choose 2}\P \left( 1\rel{T}2\wedge3\rel{T}4,\Pi^*(1)<k_n^{-\tfrac12},\Pi^*(3)<k_n^{-\tfrac12} \right) \nonumber\\
    &+{n\choose 2}2(n-2) \P \left( 1\rel{T}2\rel{T}3,\Pi^*(1)<k_n^{-\tfrac12} \right).
    \label{eq:powerlaw-mTL-second-moment}
    \end{align}
    The first term of \eqref{eq:powerlaw-mTL-second-moment} is $\E[m_T^L]=o(\E[m_T^L]^2)$. For the third term in~\eqref{eq:powerlaw-mTL-second-moment}, we use \cref{lem:pi-size-bias} to compute
    \begin{align*}
    \P\left( 1\rel{T}2\rel{T}3,\Pi^*(1) < k_n^{-\tfrac12} \right)
    & \weq k_n \E\left[ \Pi_a^3\cdot\1\{\Pi_a < k_n^{-\tfrac12}\} \right] \\
    & \weq \E \left[ (\Pi^*)^2\cdot\1\{\Pi^* < k_n^{-\tfrac12}\} \right] \\
    & \weq \bigO\left( k_n^{(3-\tau)/2-2} \right),
    \end{align*}
    so that the third term of \eqref{eq:powerlaw-mTL-second-moment} contributes as $\bigO(n^3k^{(3-\tau)/2-2})=o(n^4k^{-2})$.
    
    The second term of \eqref{eq:powerlaw-mTL-second-moment} requires extra work. Firstly,
    \begin{align*}
    \P(1\rel{T}2\rel{T}3\rel{T}4,\Pi^*(1)<k^{-\tfrac12})
    & \weq \E\left[(\Pi^*)^3\cdot\1\{\Pi^*<k^{-\tfrac12}\}\right]\\
    & \weq \bigO\left(k_n^{(3+1-\tau)\frac{1}{2}-3}\right) 
    \weq \bigO(k_n^{-\frac\tau2-1}) \weq o(k_n^{-2}),
    \end{align*}
    where we used \cref{lem:pi-size-bias} in the last step. Secondly,
    \begin{align*}
    &\P(1\rel{T}2\nrel{T}3\rel{T}4,\Pi^*(1) < k_n^{-\tfrac12},\Pi^*(3) < k_n^{-\tfrac12}) \\
    \weq & k_n(k_n-1) \E[\Pi_1^2\Pi_2^2\cdot\1\{\Pi_1<k_n^{-\tfrac12},\Pi_2<k_n^{-\tfrac12}\}]\\
    \wle & k_n(k_n-1)\E[\Pi_1^2\Pi_2^2]\\
    \wsim & \frac{(\tau-1)^4}{k_n^2\tau^2(\tau-2)^2}.
    \end{align*}
    Putting everything together, we obtain the upper bound
    \[
    \E[(m_T^L)^2] \wle \E[m_T^L]^2 + o(n^4 k_n^{-2}).
    \]
    Furthermore, Jensen's inequality tells us that $\E[(m_T^L)^2]\ge \E[m_T^L]^2$, which implies
    \[
    \E[(m_T^L)^2]\wsim \E[m_T^L]^2,
    \]
    so that indeed $m_T^L/\E[m_T^L]\wprto1$.
    In conclusion, for $2<\tau\le3$,
    \[
    \frac{m_T}{\E[m_T]} \ \stackrel{w.h.p.}{=} \ 
    \frac{m_T^L}{\E[m_T]} \wsim \frac{m_T^L}{\E[m_T^L]} \wprto1.
    \]    
\end{proof}



\end{document}